\def\thm@space@setup{%
  \thm@preskip=0.5em\thm@postskip=\thm@preskip%
}
\newtheoremstyle{named}{}{}{\\itshape}{}{\bfseries}{.}{.5em}{\thmnote{#3's }#1}
\theoremstyle{named}
\theoremstyle{plain}
\newtheorem{thm}{Theorem}[section]
\newtheorem{prop}[thm]{Proposition}
\newtheorem{lem}[thm]{Lemma}
\newtheorem{cor}[thm]{Corollary}
\theoremstyle{definition}
\newtheorem{hyp}[thm]{Hypothesis}
\theoremstyle{remark}
\newtheorem{rmk}[thm]{Remark}
\newcommand{\GL}[1]{\mathrm{GL}_{#1}}
\newcommand{\gl}[2]{\mathrm{GL}_{#1}({#2})}
\renewcommand{\sl}[2]{\mathrm{SL}_{#1}({#2})}
\newcommand{\SO}[1]{\mathrm{SO}_{#1}}
\newcommand{\so}[2]{\mathrm{SO}_{#1}({#2})}
\newcommand{\Sp}[1]{\mathrm{Sp}_{#1}}
\renewcommand{\sp}[2]{\mathrm{Sp}_{#1}({#2})}
\newcommand{\GSp}[1]{\mathrm{GSp}_{#1}}
\newcommand{\gsp}[2]{\mathrm{GSp}_{#1}({#2})}
\newcommand{\GSpin}[1]{\mathrm{GSpin}_{#1}}
\newcommand{\gspin}[2]{\mathrm{GSpin}_{#1}({#2})}
\newcommand{\brho}{\bar\rho}
\newcommand{\bkp}{\bar\kappa}
\newcommand{\tr}{\operatorname{tr}}
\newcommand{\gal}[2]{\mathrm{Gal}{(#1 / #2)}}
\newcommand{\Gal}[1]{\Gamma_{#1}}
\newcommand{\rats}{\mathbb{Q}}
\newcommand{\reals}{\mathbb{R}}
\newcommand{\cmplx}{\mathbb{C}}
\newcommand{\ints}{\mathbb{Z}}
\newcommand{\Qp}{\rats_p}
\newcommand{\Zp}{\ints_p}
\newcommand{\bQp}{\overline\rats_p}
\newcommand{\bZp}{\overline\ints_p}
\newcommand{\bQl}{\overline\rats_l}
\newcommand{\bFp}{\overline{\mathbb{F}}_p}
\newcommand{\bp}{\begin{pmatrix}}
\newcommand{\ep}{\end{pmatrix}}
\newcommand{\mc}{\mathcal}
\newcommand{\mf}{\mathfrak}
\newcommand{\mb}{\mathbb}
\newcommand{\mr}{\mathrm}
\newcommand{\frob}[1]{\mathrm{Fr}_{#1}}
\newcommand{\into}{\hookrightarrow}
\newcommand{\op}[1]{\operatorname{#1}}
\newcommand{\ov}{\overline}
\newcommand{\un}{\underline}
\title{Potential automorphy of $\op{GSpin}_{2n+1}$-valued Galois representations} 
\author[S.~Patrikis]{Stefan Patrikis}
\address{Department of Mathematics, The Ohio State University, 100 Math Tower, 231 W 18th Avenue, Columbus, OH 43210}
\author[S.~Tang]{Shiang Tang}
\address{Department of Mathematics, University of Illinois at Urbana-Champaign, 1409 W Green Street, Urbana, IL 61801}
\subjclass[2010]{11F80, 11F70}
\begin{document}

\maketitle 

\begin{abstract}
We prove a potential automorphy theorem for suitable Galois representations $\Gamma_{F^+} \to \mr{GSpin}_{2n+1}(\ov{\mb{F}}_p)$ and $\Gamma_{F^+} \to \mr{GSpin}_{2n+1}(\bQp)$, where $\Gamma_{F^+}$ is the absolute Galois group of a totally real field $F^+$. We also prove results on solvable descent for $\mr{GSp}_{2n}(\mb{A}_{F^+})$ and use these to put representations $\Gamma_{F^+} \to \mr{GSpin}_{2n+1}(\bQp)$ into compatible systems of $\mr{GSpin}_{2n+1}(\bQl)$-valued representations. 
\end{abstract}

\section{Introduction}
Given a connected reductive group $G$ defined over a number field $F$, the Langlands program predicts a connection between suitably algebraic automorphic representations of $G(\mb A_F)$ and geometric $p$-adic Galois representations $\mr{Gal}(\overline{F}/F) \to {}^LG(\bQp)$ into the L-group of $G$. Striking work of Kret-Shin (\cite{ks}) constructs the automorphic-to-Galois direction when $G$ is the group $\mr{GSp}_{2n}$ over a totally real field $F^+$, and $\pi$ is a cuspidal automorphic representation of $\mr{GSp}_{2n}(\mb{A}_{F^+})$ that is essentially discrete series at all infinite places and is a twist of the Steinberg representation at some finite place. In this paper we will establish a partial converse, proving a potential automorphy theorem, and some applications, for suitable $\mr{GSpin}_{2n+1}$-valued Galois representations. Before discussing our main results, we will put the work of Kret-Shin in context.

Their construction builds on two monumental works. First, it depends on the construction of automorphic Galois representations when $G= \mr{GL}_{2n+1}/F^+$, and $\pi$ is cuspidal, regular algebraic, essentially self-dual, and square-integrable at some finite place: extending work of Kottwitz, Clozel (\cite{clozel}) constructed the relevant Galois representations, and the essential properties for the purposes of \cite{ks} were proven by Harris-Taylor (\cite{ht}) and Taylor-Yoshida (\cite{taylor-yoshida}).\footnote{There is an extensive and deep literature devoted to eliminating the square-integrability condition: we mention as a sampling work of Bella\"{i}che, Caraiani, Chenevier, Clozel, Harris, Kottwitz, Labesse, Shin, and Taylor, which in turn relies on other deep automorphic advances, particularly work of Waldspurger and Ng\^{o}.} Second, it requires Arthur's endoscopic classification of representations (\cite{art13}), which among other things describes the discrete automorphic spectrum of $\mr{Sp}_{2n}(\mb{A}_{F^+})$ in terms of self-dual discrete automorphic representations of $\mr{GL}_{2n+1}(\mb{A}_{F^+})$.
These two marvelous developments allow (e.g., \cite[Theorem 2.4]{ks}) the construction of $\SO{2n+1}$-valued Galois representations associated to cuspidal automorphic representations of $G=\Sp{2n}$ that are discrete series at infinity and Steinberg at some finite prime (and indeed more generally).

Taking into account all of those advances, there remains a significant gap between the theorem of \cite{ks} for $\mr{GSp}_{2n}$ and the previously-known result for $\mr{Sp}_{2n}$. We briefly summarize the difficulty. Let $\Gamma_{F^+}= \mr{Gal}(\overline{F^+}/F^+)$, and fix an isomorphism $\iota \colon \cmplx \xrightarrow{\sim} \bQp$. A theorem of Tate shows that any representation $\Gamma_{F^+} \to \mr{SO}_{2n+1}(\bQp)$ lifts to a $\mr{GSpin}_{2n+1}(\bQp)$-valued representation, which is then determined up to a central twist. Thus one hopes to start with the automorphic representation $\tilde{\pi}$ of $\mr{GSp}_{2n}(\mb{A}_{F^+})$ under consideration, consider an automorphic component $\pi$ of its restriction to $\mr{Sp}_{2n}(\mb{A}_{F^+})$, construct the Galois representation $r_{\pi, \iota} \colon \Gamma_{F^+} \to \mr{SO}_{2n+1}(\bQp)$ associated to $\pi$, and then choose the ``right" lift to $\mr{GSpin}_{2n+1}(\bQp)$. As it stands, there is no technique internal to the theory so far described that allows one to do this: even if one can choose a lift with Clifford norm matching the central character of $\tilde{\pi}$, there remains at each place $v$ such that $\tilde{\pi}_v$ is unramified a $\pm 1$ ambiguity in whether the constructed Galois representation matches the Satake parameter of $\tilde{\pi}_v$ (not to speak of the other finite places). No elementary twisting argument can resolve this, and Kret-Shin address the problem by realizing the composition $\op{spin}(r_{\tilde{\pi}, \iota})$ of the desired $r_{\tilde{\pi}, \iota}$ with the (faithful) spin representation $\op{spin} \colon \mr{GSpin}_{2n+1} \to \mr{GL}_{2^n}$ inside the cohomology of a Shimura variety for a suitable inner form of $\mr{GSp}_{2n}$. The principal challenge of their paper consists of the subtle analysis of the cohomology of this Shimura variety.

We now return to the setup of our paper. The deepest inputs for our potential automorphy theorems are the potential automorphy theorem of Barnet-Lamb, Gee, Geraghty, Taylor of \cite{blggt} (and, for the strongest statement, a recent improvement due to Calegari, Emerton, and Gee in \cite{calegari-emerton-gee}) and Arthur's work (\cite{art13}). We develop these and their relationships with the construction of \cite{ks} to prove the following two potential automorphy results, one for mod $p$ and one for $p$-adic representations. Here and throughout the paper, we let $\op{std} \colon \mr{GSpin}_{2n+1} \to \mr{GL}_{2n+1}$ denote the standard representation, and we let $N \colon \mr{GSpin}_{2n+1} \to \mb{G}_m$ denote the Clifford norm. 
\begin{thm}[See Theorem \ref{residual aut}]\label{potautintro}
Let $p$ be a prime, $p \geq 2n+4$, and let $\bar{r} \colon \Gamma_{F^+} \to \mr{GSpin}_{2n+1}(\overline{\mb{F}}_p)$ be a continuous representation satisfying the following hypotheses:
\begin{itemize}
    \item The restriction $\op{std}(\bar{r})|_{\Gamma_{F^+(\mu_p)}}$ is irreducible.
    \item $\bar{r}$ is odd (see Hypothesis \ref{hypotheses}).
\end{itemize}
Then there exist a totally real Galois extension $L^+/F^+$ and a cuspidal automorphic representation $\tilde{\pi}$ of $\mr{GSp}_{2n}(\mb{A}_{L^+})$, satisfying the hypotheses (St) and (L-coh) of \cite{ks}, such that a suitable $\mr{GSpin}_{2n+1}$-conjugate of the representation $r_{\tilde{\pi}, \iota} \colon \Gamma_{L^+} \to \mr{GSpin}_{2n+1}(\bQp)$ constructed by \cite{ks} reduces mod $p$ to $\bar{r}|_{\Gamma_{L^+}}$.
\end{thm}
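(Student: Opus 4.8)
The plan is to reduce the statement to one about $\GL{2n+1}$-valued representations, apply the potential automorphy theorems of \cite{blggt} and \cite{calegari-emerton-gee} there, descend along Arthur's classification (\cite{art13}) first to $\Sp{2n}$ and then to $\GSp{2n}$, feed the result to \cite{ks}, and finally reconcile the two resulting lifts of an $\SO{2n+1}$-valued representation by a quadratic twist.

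\emph{Step 1 (passage to $\GL{2n+1}$).} Put $\bar\rho := \op{std}(\bar r) \colon \Gamma_{F^+} \to \GL{2n+1}(\bFp)$. Since $\op{std}$ factors as $\GSpin{2n+1} \to \SO{2n+1} \into \GL{2n+1}$, the representation $\bar\rho$ is self-dual of orthogonal type with trivial determinant; the hypothesis that $\bar r$ is odd (Hypothesis \ref{hypotheses}) is exactly what makes $\bar\rho$ totally odd, the hypothesis on $\op{std}(\bar r)|_{\Gamma_{F^+(\mu_p)}}$ gives irreducibility, and $p \geq 2n+4$ guarantees both that the image of $\bar\rho$ is adequate and that $\bar\rho$ is Fontaine--Laffaille at $p$, with room to spare, hence admits a regular crystalline lift. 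I would then apply the potential automorphy theorem to $\bar\rho$, using the variant that in addition imposes a twist-of-Steinberg condition at an auxiliary finite place: choose a place $v_1$ at which $\bar\rho$ is unramified with $q_{v_1} \equiv 1 \pmod p$, lift $\bar\rho$ over a preliminary totally real base change to a potentially diagonalizable representation that is Steinberg at $v_1$, and apply the residually-irreducible automorphy lifting theorem. This produces a totally real field $L^+$, Galois over $F^+$ and linearly disjoint over $F^+$ from the extension cut out by $\bar r$ and $\mu_p$, together with a cuspidal, regular algebraic, self-dual representation $\Pi$ of $\GL{2n+1}(\mb A_{L^+})$ of orthogonal type that is a twist of Steinberg at (a place above) $v_1$ and satisfies $\bar r_{\Pi,\iota} \iso \bar\rho|_{\Gamma_{L^+}}$.

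\emph{Step 2 (descent to $\GSp{2n}$ and \cite{ks}).} By Arthur's classification (\cite{art13}), the cuspidal self-dual orthogonal-type $\Pi$ is the functorial transfer along $\SO{2n+1}(\cmplx) = \widehat{\Sp{2n}} \into \GL{2n+1}(\cmplx)$ of a cuspidal automorphic representation $\sigma$ of $\Sp{2n}(\mb A_{L^+})$ attached to a simple, hence everywhere-tempered, parameter; because $\Pi$ is a twist of Steinberg at $v_1$ and cohomological of regular weight at infinity, $\sigma$ may be chosen with $\sigma_{v_1}$ the Steinberg representation and $\sigma_\infty$ a discrete series of the prescribed regular infinitesimal character. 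I would then extend $\sigma$ to a cuspidal automorphic representation $\tilde\pi$ of $\GSp{2n}(\mb A_{L^+})$ whose restriction to $\Sp{2n}(\mb A_{L^+})$ contains $\sigma$, choosing the central character to be an algebraic Hecke character --- of the correct archimedean parity --- whose associated $p$-adic character reduces mod $p$ to $N(\bar r)|_{\Gamma_{L^+}}$. Then $\tilde\pi$ satisfies hypotheses (St) (from $\sigma_{v_1}$) and (L-coh) (from $\sigma_\infty$ and the algebraicity of the central character) of \cite{ks}, and \cite{ks} attaches to it a representation $r_{\tilde\pi,\iota} \colon \Gamma_{L^+} \to \GSpin{2n+1}(\bQp)$ with $\op{std} \circ r_{\tilde\pi,\iota} \iso r_{\Pi,\iota}$ and with $N \circ r_{\tilde\pi,\iota}$ the $p$-adic character of the central character of $\tilde\pi$.

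\emph{Step 3 (matching mod $p$; the main obstacles).} Reducing mod $p$, both $\op{std} \circ \bar r_{\tilde\pi,\iota}$ and $\op{std} \circ (\bar r|_{\Gamma_{L^+}})$ are isomorphic to the irreducible representation $\op{std}(\bar r)|_{\Gamma_{L^+}}$, so after conjugating $\bar r_{\tilde\pi,\iota}$ by an element of $\GSpin{2n+1}(\bFp)$ I may assume the two have the same image in $\SO{2n+1}(\bFp)$; then $\bar r_{\tilde\pi,\iota} = \psi \cdot (\bar r|_{\Gamma_{L^+}})$ for a character $\psi \colon \Gamma_{L^+} \to Z(\GSpin{2n+1})(\bFp)$ into the central kernel of $\op{std}$. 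Applying $N$, and using that the central character of $\tilde\pi$ was arranged so that $N \circ \bar r_{\tilde\pi,\iota} = N(\bar r|_{\Gamma_{L^+}})$, forces $\psi^2 = 1$, i.e.\ $\psi$ takes values in $\mu_2 = \ker(\op{std} \times N)$ --- the familiar $\pm 1$ ambiguity of the introduction. Choosing a quadratic Hecke character $\delta$ of $L^+$ realizing $\psi$ and replacing $\tilde\pi$ by $\tilde\pi \otimes (\delta \circ \mathrm{sim})$ --- which changes neither the restriction to $\Sp{2n}$, nor the central character, nor the conditions (St) and (L-coh), but twists $r_{\tilde\pi,\iota}$ by $\psi$ --- then yields a $\GSpin{2n+1}$-conjugate of $r_{\tilde\pi,\iota}$ reducing mod $p$ to $\bar r|_{\Gamma_{L^+}}$. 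I expect the technical heart of the argument to be, first, establishing the Steinberg local condition at $v_1$ and propagating it faithfully through the $\SO{2n+1}$- and $\GSp{2n}$-descents, and second, the bookkeeping of central characters and archimedean $L$-parameters required in Step 2 to extend $\sigma$ to a $\tilde\pi$ on $\GSp{2n}$ that simultaneously reduces to $N(\bar r)$ mod $p$ and satisfies (St) and (L-coh).
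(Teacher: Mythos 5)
Your overall architecture coincides with the paper's: potential automorphy for the standard $(2n+1)$-dimensional representation via \cite{blggt}, Arthur's descent to $\mr{Sp}_{2n}$, extension to $\mr{GSp}_{2n}$ by a Labesse--Langlands argument, input into \cite{ks}, and a final central twist to kill the $\pm1$ ambiguity. The one genuine structural difference is where the final comparison happens. The paper first proves a $\mr{GSpin}_{2n+1}$-valued lifting theorem (Theorem \ref{lifting thm}, via the Khare--Wintenberger method and a comparison of deformation rings for $\mr{GSpin}_{2n+1}$, $\mr{SO}_{2n+1}$ and $\mc{G}_{2n+1}$), producing a characteristic-zero lift $r$ of $\bar{r}$ that is Steinberg at an auxiliary place; the matching step (Corollary \ref{twist}) then compares the two characteristic-zero lifts $r$ and $r_{\tilde{\pi},\iota}$ of $P(r)$, using equality of Hodge--Tate weights of the Clifford norms to see that the discrepancy is finite order. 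You instead compare residually: $\op{std}(\bar{r}_{\tilde{\pi},\iota}) \cong \op{std}(\bar{r})|_{\Gamma_{L^+}}$ and irreducibility force the two reductions to differ by a central character mod $p$, which is automatically finite order and removable by a Hecke twist along the similitude character. For the mod $p$ statement this is a legitimate simplification (and you need not even insist that $\psi^2=1$: any finite-order central discrepancy can be undone by twisting $\tilde{\pi}$ by the Teichm\"{u}ller lift of $\psi^{-1}$ composed with the similitude character, which preserves (St) and (L-coh)). Note, however, that producing a characteristic-zero lift of $\op{std}(\bar{r})$ with prescribed Steinberg and $p$-adic local behaviour is still the engine behind \cite[Theorem 4.3.1]{blggt}, so you are not avoiding the lifting step, only avoiding doing it $\mr{GSpin}$-equivariantly.

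Two points need repair. First, the claim that ``$p \geq 2n+4$ guarantees \ldots that $\bar{\rho}$ is Fontaine--Laffaille at $p$, hence admits a regular crystalline lift'' is false: no lower bound on $p$ makes an arbitrary residual local representation at $p$ Fontaine--Laffaille, nor does it guarantee a potentially diagonalizable lift. The paper obtains the required local lifts by first passing (via \cite[Lemma 4.1.2]{cht}) to a totally real $K^+$ over which $\bar{r}$ becomes trivial at all places of $S$, and then writing down explicit ordinary lifts at $v\vert p$ (cocharacters applied to powers of the cyclotomic character) and an explicit principal-$\mr{SL}_2$ Steinberg lift at $v_{\mr{St}}$, with matching Clifford norms; your ``preliminary base change'' must be doing exactly this, and the justification is the local triviality of the restriction, not Fontaine--Laffaille theory. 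Second, in Step 2 you assert that the central character of $\tilde{\pi}$ can be chosen so that its $p$-adic avatar reduces to $N(\bar{r})|_{\Gamma_{L^+}}$; this presumes that such a character restricts on $\mu_2(\mb{A}_{L^+})$ to the central character $\omega_{\pi}$ of $\pi$, which is precisely the desideratum of \cite[III.10.3]{bor} that is not known in advance for $\mr{Sp}_{2n}$ at ramified finite places. The paper circumvents this by choosing an arbitrary finite-order extension first and only afterwards (Corollary \ref{twist}) deducing from the Kret--Shin construction that the desired character does extend $\omega_{\pi}$; you should either reorganize your argument the same way or, more simply, absorb the resulting discrepancy into the final finite-order twist as indicated above.
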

We note that this theorem makes no ``Steinberg" local hypothesis; the first step in its proof is to show (Theorem \ref{lifting thm}) that $\bar{r}$ admits a geometric lift $r \colon \Gamma_{F^+} \to \mr{GSpin}_{2n+1}(\bZp)$ that at some auxiliary finite place looks like the Langlands parameter of the Steinberg representation (up to twist). Theorem \ref{lifting thm} relies on using the lifting method of Khare-Wintenberger (\cite{kw}) and a comparison of deformation rings for the groups $\mr{GSpin}_{2n+1}$, $\mr{SO}_{2n+1}$, and $\mr{GL}_{2n+1}$, ultimately invoking a finiteness result from \cite{blggt} that itself comes from combining $R=T$ theorems with the method of \cite{kw}. Having constructed this lift $r$, we can run the method of the proof of our $p$-adic potential automorphy theorem, which we now state. For details of the terminology, see \S \ref{sec:notation} and \cite{calegari-emerton-gee}. 
\begin{thm}[Theorem \ref{padicpotaut}]\label{padicpotautintro}
Let $r \colon \Gamma_{F^+} \to \mr{GSpin}_{2n+1}(\bZp)$ be a continuous representation whose Clifford norm is a geometric character $N(r) \colon \Gamma_{F^+} \to \bZp^\times$. Assume the following:
\begin{itemize}
    \item $p>2(2n+1)$.
    \item $\bar{r}$ is odd.
    \item For some finite place $v_{\mr{St}}$ of $F^+$, the Frobenius semisimple Weil-Deligne representation associated to $r|_{\Gamma_{F^+_{v_{\mr{St}}}}}$ is equivalent to a twist of the Steinberg-type Langlands parameter (equivalently, $P(r|_{\Gamma_{F^+_{v_{\mr{St}}}}})$ is a Steinberg parameter for $\mr{Sp}_{2n}(F^+_{v_{\mr{St}}})$). 
    \item There exist a quadratic CM extension $F/F^+$ and a character $\mu \colon \Gamma_{F^+} \to \bZp^\times$ such that 
    \begin{itemize}
    \item $F$ does not contain $\zeta_p$, and $\op{std}(\bar{r})|_{\Gamma_{F(\zeta_p)}}$ is irreducible.  
    \item $(\op{std}(r)|_{\Gamma_{F}}, \mu)$ is polarized, and for some (any) choice of prolongation 
    \[
    \rho(\op{std}(\bar{r})|_{\Gamma_{F}}, \mu) \colon \Gamma_{F^+} \to \mc{G}_{2n+1}(\bZp),
    \]
$\rho(\op{std}(r), \mu)|_{\Gamma_{F^+_v}}$ is globally realizable for each $v \vert p$. Here $\mc G_{2n+1}$ denotes the Clozel-Harris-Taylor group scheme, see Section \ref{sec:notation}.
    \end{itemize}
\end{itemize}
Then there exist a totally real Galois extension $L^+/F^+$ and a cuspidal automorphic representation $\tilde{\pi}$ of $\mr{GSp}_{2n}(\mb{A}_{L^+})$ satisfying the hypotheses (St) and (L-coh) of \cite{ks} such that $r_{\tilde{\pi}, \iota}$ is equivalent to $r|_{\Gamma_{L^+}}$ as $\mr{GSpin}_{2n+1}(\bQp)$-representations.
\end{thm}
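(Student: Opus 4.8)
The plan is to transport the problem across the chain of groups $\GSpin{2n+1} \rightsquigarrow \mr{GL}_{2n+1} \rightsquigarrow \Sp{2n} \rightsquigarrow \GSp{2n}$: apply potential automorphy to $\op{std}(r)$, descend via Arthur to a cuspidal representation $\sigma$ of $\Sp{2n}$, extend $\sigma$ to a cuspidal representation $\tilde\pi$ of $\GSp{2n}$, feed $\tilde\pi$ to the construction of \cite{ks}, and finally reconcile the output $r_{\tilde\pi,\iota}$ with $r$ itself by an explicit central twist. A preliminary observation is that $r$ is de Rham: writing $\mr{pr}\colon \GSpin{2n+1} \to \SO{2n+1}$ for the quotient by the central $\mb{G}_m$, the representations $\op{std}$ and $\mr{pr}$ of $\GSpin{2n+1}$ differ by a twist by a power of the Clifford norm $N$ (the only characters of $\GSpin{2n+1}$ being powers of $N$), so $\mr{pr}\circ r$ is de Rham because $\op{std}(r)$ (by the polarization and global realizability hypotheses) and $N(r)$ (by hypothesis) are; meanwhile $\det \op{spin}(r)$, being a character of $\GSpin{2n+1}$, is a power of $N(r)$ and hence de Rham, so $\op{spin}(r)$, and therefore $r$, is de Rham. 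This is precisely where the ``Clifford norm geometric'' hypothesis is used.

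First I would apply the potential automorphy theorem of Calegari--Emerton--Gee (\cite{calegari-emerton-gee}, built on \cite{blggt}) to the polarized pair $(\op{std}(r)|_{\Gamma_F}, \mu)$: its conjugate self-duality, oddness, residual irreducibility over $\Gamma_{F(\zeta_p)}$, and global realizability above $p$ are exactly the hypotheses we are given. Combined with cyclic descent for $\mr{GL}_{2n+1}$ (applied to the $\mr{Gal}(L/L^+)$-invariant cuspidal representation produced over the CM field), this yields a totally real finite Galois extension $L^+/F^+$, with $L := L^+F$ quadratic CM over $L^+$, which we may take linearly disjoint from any prescribed finite extension of $F^+$ (so that $\op{std}(\bar r)|_{\Gamma_{L^+(\zeta_p)}}$ remains irreducible), together with a regular algebraic, essentially self-dual, cuspidal automorphic representation $\Pi$ of $\mr{GL}_{2n+1}(\mb{A}_{L^+})$, of orthogonal type, cohomological at $\infty$, with $r_{\Pi,\iota} \simeq \op{std}(r)|_{\Gamma_{L^+}}$. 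Since Steinberg parameters are preserved under base change, there is a place $w_0 \mid v_{\mr{St}}$ of $L^+$ at which $r$ is still of Steinberg type, and we may take $\Pi_{w_0}$ to be a twist of the Steinberg representation.

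Next, $\Pi$ being cuspidal and self-dual of orthogonal type is the functorial transfer of a simple generic elliptic Arthur parameter for the split group $\Sp{2n}/L^+$ (dual group $\SO{2n+1}$); by \cite{art13} the associated global packet meets the discrete spectrum, and the local component at $w_0$ forces a member $\sigma$ that is cuspidal, with $\sigma_{w_0}$ Steinberg and $\sigma_\infty$ a regular discrete series. I would then extend $\sigma$ to a cuspidal automorphic representation $\tilde\pi$ of $\GSp{2n}(\mb{A}_{L^+})$ — such an extension exists, as $\GSp{2n}(\mb{A}_{L^+})$ is generated by $\GSp{2n}(L^+)$, $\Sp{2n}(\mb{A}_{L^+})$, and the centre — choosing the similitude character to be an algebraic Hecke character so that $\tilde\pi$ is L-cohomological; since $\tilde\pi_{w_0}$ is again a twist of Steinberg, $\tilde\pi$ satisfies (St) and (L-coh), and \cite{ks} produces $r_{\tilde\pi,\iota}\colon \Gamma_{L^+}\to \GSpin{2n+1}(\bQp)$. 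By the compatibility of $r_{\tilde\pi,\iota}$ with the $\mr{GL}_{2n+1}$-transfer of $\tilde\pi$, the representation $\op{std}(r_{\tilde\pi,\iota})$ agrees with $\op{std}(r)|_{\Gamma_{L^+}}$ up to a character twist, whence — using irreducibility of $\op{std}(\bar r)$, which rigidifies the underlying $\SO{2n+1}$-valued representation — $\mr{pr}\circ r_{\tilde\pi,\iota}$ and $\mr{pr}\circ r|_{\Gamma_{L^+}}$ are $\SO{2n+1}(\bQp)$-conjugate; after conjugating $r_{\tilde\pi,\iota}$ we may take them equal. Two $\GSpin{2n+1}$-lifts of one $\SO{2n+1}$-valued representation differ by a continuous character $\psi\colon \Gamma_{L^+}\to\bQp^\times$ valued in the central $\mb{G}_m$, so $r|_{\Gamma_{L^+}} \simeq r_{\tilde\pi,\iota}\otimes\psi$; both sides being de Rham, $\psi$ is de Rham, hence $\psi = \chi\circ\mr{Art}$ for an algebraic Hecke character $\chi$ of $L^+$. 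Replacing $\tilde\pi$ by $\tilde\pi\otimes(\chi\circ\mr{sim})$ — which preserves (St) (still a twist of Steinberg) and (L-coh) (an algebraic twist) and, under the normalization of \cite{ks} (in which $N(r_{\tilde\pi,\iota})$ is the Galois character of the similitude character), twists $r_{\tilde\pi,\iota}$ by $\psi$ through the central $\mb{G}_m$ — gives $r_{\tilde\pi,\iota} \simeq r|_{\Gamma_{L^+}}$ as $\GSpin{2n+1}(\bQp)$-representations, the Clifford norms matching automatically.

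I expect the main obstacle to be not the potential automorphy input, which is used as a black box, but the coherent bookkeeping along the chain of groups. The delicate points are: arranging in the first step that the output field is totally real and Galois over $F^+$ and that all the local data (Steinberg at a place over $v_{\mr{St}}$, global realizability at $p$) and the residual irreducibility descend correctly from the CM field $L$ to $L^+$; controlling cuspidality, the local components, and the similitude character through $\mr{GL}_{2n+1}\rightsquigarrow\Sp{2n}\rightsquigarrow\GSp{2n}$ (where the existence and cuspidality of the extension to $\GSp{2n}$, the preservation of (St) and (L-coh), and the precise normalization of automorphic versus Galois twists must all be handled with care); and — most \emph{essentially} — verifying that the discrepancy between the Kret--Shin representation $r_{\tilde\pi,\iota}$ and the given $r|_{\Gamma_{L^+}}$ is exactly a central character, which rests on the compatibility of $r_{\tilde\pi,\iota}$ with the standard transfer and on the rigidity afforded by residual irreducibility, and can then be absorbed by an automorphic twist.
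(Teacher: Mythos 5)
Your proposal is correct and follows essentially the same route as the paper: apply the Calegari--Emerton--Gee potential automorphy theorem to the polarized pair $(\op{std}(r)|_{\Gamma_F},\mu)$, descend the resulting representation from the CM field to the totally real $L^+$, pass through Arthur's classification to $\mr{Sp}_{2n}$ and then extend to $\mr{GSp}_{2n}$ \`a la Labesse--Langlands, feed the result to Kret--Shin, and absorb the remaining central-character discrepancy by an automorphic twist (the paper's Corollary \ref{twist}). The only detail you gloss over that the paper addresses explicitly is pinning down the polarization character $\mu$ as $\delta_{F/F^+}$ (via the Steinberg hypothesis and oddness) before invoking the standard prolongation, but this is a minor point within the same argument.
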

For instance, the hypothesis on places above $p$, which we should note forces the Hodge-Tate cocharacters of $r|_{\Gamma_{F^+_v}}$ to be regular, includes the cases where $\op{std}(r)|_{\Gamma_{F^+_v}}$ is Hodge-Tate regular and potentially diagonalizable in the sense of \cite{blggt}. In that case the theorem statement admits a straightforward modification with no need for an auxiliary extension $F$ or mention of prolongations. 

The strategy here is to combine the potential automorphy theorem of \cite{calegari-emerton-gee} 
with \cite{art13} to realize the projection $P(r) \colon \Gamma_{F^+} \to \mr{SO}_{2n+1}(\bZp)$, after restriction to some $L^+$, as the representation $r_{\pi, \iota}$ associated to a cuspidal automorphic representation of $\pi$ of $\mr{Sp}_{2n}(\mb{A}_{L^+})$. Then we show that $\pi$ can be extended to a cuspidal automorphic representation $\tilde{\pi}$ of $\mr{GSp}_{2n}(\mb{A}_{L^+})$ that satisfies the hypotheses of the main theorem of \cite{ks}, and finally we check that a suitable twist of $\tilde{\pi}$ in fact corresponds to our original $r|_{\Gamma_{L^+}}$.   

In \S \ref{sec:compatible}, we discuss an application of our potential automorphy theorem. Recall that for $\mr{GL}_n$, a method due to Taylor (\cite{taylor:galoisreps}) based on combining potential automorphy theorems with Brauer's theorem from finite group theory allows one to realize a representation $r \colon \Gamma_{F^+} \to \mr{GL}_n(\bQp)$, assumed to satisfy the hypotheses of the $\mr{GL}_n$ potential automorphy theorem, as part of a (strictly) compatible system of $\ell$-adic Galois representations. We apply Theorem \ref{padicpotaut} to prove an analogous result for $\mr{GSpin}_{2n+1}(\bQp)$-representations. This may be the subtlest part of our work. Taylor's method relies on solvable descent, which depends on a combination of cyclic (prime degree) descent, established in \cite{arthur-clozel}, and knowledge that in the relevant cases the descended automorphic representations themselves have associated Galois representations. We establish instances of solvable descent for $\mr{GSp}_{2n}(\mb{A}_{L^+})$, 
bootstrapping as before from the case of $\mr{GL}_{2n+1}$. For our principal result, Theorem \ref{compsystem}, we only need the solvable descent of an invariant automorphic representation in a setting where the associated Galois representation is already known to descend; we treat this problem in Theorem \ref{solvabledescent}. We conclude the section with a complementary result (Corollary \ref{descentinv}) when only the automorphic representation is known to be invariant:  then cyclic descent still holds in our setting, but solvable descent is complicated, as for $\mr{GL}_N$, by the possibility of invariant Hecke characters that do not descend.

The other difficulty in extending Taylor's argument is that of course no Brauer argument can be made directly with $\mr{GSpin}_{2n+1}$-valued Galois representations. The particular possibilities for the algebraic monodromy groups of the representations $r_{\tilde{\pi}, \iota}$ constructed by \cite{ks} make possible a case-by-case analysis, with the ``generic" case of full $\mr{GSpin}_{2n+1}$ monodromy relying on applying Brauer's theorem to $\op{spin}(r)$. (That said, we emphasize that the potential automorphy theorem being applied is still for $r \colon \Gamma_{F^+} \to \mr{GSpin}_{2n+1}(\bQp)$, not for $\op{spin}(r)$; indeed, the latter will typically not satisfy the hypotheses of $\mr{GL}_{2^n}$ potential automorphy theorems.) Theorem \ref{compsystem} combines these ideas to construct the desired compatible system:
\begin{thm}\label{compsystemintro}
Let $r \colon \Gamma_{F^+} \to \mr{GSpin}_{2n+1}(\bZp)$ satisfy the hypotheses of Theorem \ref{padicpotautintro}. Then for all primes $\ell$ and choices of isomorphism $\iota_\ell \colon \cmplx \xrightarrow{\sim} \bQl$ there is a continuous representation $r_{\iota_{\ell}} \colon \Gamma_{F^+} \to \mr{GSpin}_{2n+1}(\bQl)$ such that:
\begin{itemize}
    \item For all but finitely many primes $v$ at which $r$ is unramified, the semisimple conjugacy class of $\iota^{-1}r(\frob{v})^{\mr{ss}}$ agrees with that of $\iota_{\ell}^{-1}r_{\iota_{\ell}}(\frob{v})^{\mr{ss}}$.
    \item For all primes $v \vert \ell$, $r_{\iota_{\ell}}$ is de Rham, and its Hodge-Tate cocharacters are determined up to conjugacy by those of $r$: for each embedding $\tau \colon F^+ \to \bQl$, determining a place $v \vert \ell$ of $F^+$, and inducing $\iota \iota_{\ell}^{-1} \tau \colon F^+ \to \bQp$ and a corresponding place $\iota_{\ell}^*(v) \vert p$ of $F^+$, we have the equality  $\iota_{\ell}^{-1}\mu(r_{\iota_{\ell}}|_{\Gamma_{F^+_v}}, \tau)= \iota^{-1}\mu(r|_{\Gamma_{F^+_{\iota_{\ell}^*(v)}}}, \iota \iota_{\ell}^{-1} \tau)$ of conjugacy classes of $\mr{GSpin}_{2n+1}(\cmplx)$-valued cocharacters.
\end{itemize}
\end{thm}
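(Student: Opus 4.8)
The argument follows the template of Taylor's construction of compatible systems, bootstrapped from $\mr{GL}$ via the standard and spin representations. First I would apply Theorem~\ref{padicpotautintro} to obtain a totally real Galois extension $L^+/F^+$ and a cuspidal automorphic representation $\tilde\pi$ of $\mr{GSp}_{2n}(\mb{A}_{L^+})$ satisfying (St) and (L-coh) with $r_{\tilde\pi,\iota}\cong r|_{\Gamma_{L^+}}$. For each prime $\ell$ and each $\iota_\ell\colon\cmplx\xrightarrow{\sim}\bQl$, \cite{ks} attaches to $\tilde\pi$ a representation $r_{\tilde\pi,\iota_\ell}\colon\Gamma_{L^+}\to\mr{GSpin}_{2n+1}(\bQl)$, and these form a weakly compatible system mutually and (via $\iota$) with $r|_{\Gamma_{L^+}}$: the semisimple Frobenius conjugacy classes at unramified places all equal the Satake parameter of the corresponding local component of $\tilde\pi$, and the Hodge--Tate cocharacters are all computed, up to the appropriate $\iota$/$\iota_\ell$ comparison, from the infinitesimal character of $\tilde\pi$ (hence, via $\iota$, from the Hodge--Tate data of $r$). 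So over $L^+$ the desired system already exists, and the problem is to descend it to $F^+$.

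For the descent set $\Gamma=\mathrm{Gal}(L^+/F^+)$. Brauer's induction theorem writes $\mathbf 1_\Gamma=\sum_i a_i\,\mathrm{Ind}_{\Gamma_i}^\Gamma\psi_i$ in the representation ring of $\Gamma$, with $a_i\in\ints$, $\Gamma_i\leq\Gamma$ nilpotent, and $\psi_i\colon\Gamma_i\to\bQ^\times$ of finite order; put $L^+_i=(L^+)^{\Gamma_i}$, so $L^+/L^+_i$ is solvable and totally real, and inflate $\psi_i$ to a finite-order character of $\Gamma_{L^+_i}$ with $\ell$-adic avatar $\psi_{i,\ell}$. Since $r|_{\Gamma_{L^+}}=r_{\tilde\pi,\iota}$ is the restriction of $r|_{\Gamma_{L^+_i}}$ it is $\mathrm{Gal}(L^+/L^+_i)$-invariant, hence so is $\tilde\pi$; I would then apply Theorem~\ref{solvabledescent} --- solvable descent of an invariant cuspidal representation of $\mr{GSp}_{2n}$ in the presence of a known descent of the associated Galois representation --- to produce a cuspidal $\tilde\pi_i$ on $\mr{GSp}_{2n}(\mb{A}_{L^+_i})$, again of type (St) and (L-coh), whose base change to $L^+$ is $\tilde\pi$; after a central twist we may take $r_{\tilde\pi_i,\iota}\cong r|_{\Gamma_{L^+_i}}$. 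Feeding each $\tilde\pi_i$ back into \cite{ks} gives, for every $\ell$, representations $r_{\tilde\pi_i,\iota_\ell}\colon\Gamma_{L^+_i}\to\mr{GSpin}_{2n+1}(\bQl)$ compatible with $r|_{\Gamma_{L^+_i}}$.

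Now fix $\ell$. The formal half of Taylor's method applies to the honest representations $\op{std}(r_{\tilde\pi_i,\iota_\ell})$: forming $V_{\iota_\ell}=\sum_i a_i\,\mathrm{Ind}_{\Gamma_{L^+_i}}^{\Gamma_{F^+}}\big(\psi_{i,\ell}\otimes\op{std}(r_{\tilde\pi_i,\iota_\ell})\big)$ and comparing with the honest identity $\op{std}(r)=\sum_i a_i\,\mathrm{Ind}_{\Gamma_{L^+_i}}^{\Gamma_{F^+}}\big(\psi_i\otimes\op{std}(r)|_{\Gamma_{L^+_i}}\big)$, one checks (as in \cite{taylor:galoisreps}, using Chebotarev and that the $p$-adic member is honest) that $V_{\iota_\ell}$ is effective and semisimple, an honest $V_{\iota_\ell}\colon\Gamma_{F^+}\to\mr{GL}_{2n+1}(\bQl)$ with $V_{\iota_\ell}|_{\Gamma_{L^+_i}}\cong\op{std}(r_{\tilde\pi_i,\iota_\ell})$ and with Frobenius characteristic polynomials matching those of $\op{std}(r)$ under $\iota$/$\iota_\ell$. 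Inheriting orthogonality and the determinant from the pieces, $V_{\iota_\ell}$ refines to $P_{\iota_\ell}\colon\Gamma_{F^+}\to\mr{SO}_{2n+1}(\bQl)$. As $N(r)$ is geometric it has an $\ell$-adic avatar $N_{\iota_\ell}$; by Tate's theorem $P_{\iota_\ell}$ lifts to $\mr{GSpin}_{2n+1}(\bQl)$, and twisting by a character to match the Clifford norm with $N_{\iota_\ell}$ pins the lift down up to a quadratic character. To kill this last ambiguity I would run the same Brauer combination with the pieces $\op{spin}(r_{\tilde\pi_i,\iota_\ell})$ in place of $\op{std}$, obtaining the Frobenius characteristic polynomials of $\op{spin}(r)$ (under $\iota$/$\iota_\ell$) at almost all places, and define $r_{\iota_\ell}$ to be the lift of $P_{\iota_\ell}$ realizing them. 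That such a lift exists, is unique, and in fact has the right Frobenius semisimple \emph{conjugacy} classes in $\mr{GSpin}_{2n+1}$ (not merely characteristic polynomials) is where the classification of the possible algebraic monodromy groups of $r\cong r_{\tilde\pi,\iota}$ coming from \cite{ks} is used: in the generic case the monodromy is all of $\mr{GSpin}_{2n+1}$, where semisimple classes are separated by their standard, spin, and Clifford-norm invariants and Zariski density lets one fix the sign globally and uniformly in $\ell$, while each of the finitely many non-generic cases is dealt with directly, usually by reduction to lower-rank spin groups or $\mr{GL}_2$-type factors for which the requisite compatible systems and their spin Frobenii are already available. Finally, de Rham-ness at $v\mid\ell$ and the asserted Hodge--Tate cocharacter identity follow from the corresponding properties of the $r_{\tilde\pi_i,\iota_\ell}$ over $L^+_i$, since these are local invariants unchanged by the finite base change $L^+_{i,w}/F^+_v$ and are governed by the infinitesimal character of $\tilde\pi$, which by construction reproduces the Hodge--Tate data of $r$.

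The main obstacle is exactly this last synthesis step: Brauer induction can only be performed after composing with a faithful linear representation, which destroys the $\mr{GSpin}_{2n+1}$-structure, so the standard- and spin-side compatibilities must be extracted at the level of characteristic polynomials and then reassembled into a single $\mr{GSpin}_{2n+1}(\bQl)$-valued representation with the correct Frobenius conjugacy classes. This is the compatible-system incarnation of the central difficulty overcome in \cite{ks} --- fixing a sign in a $\mr{GSpin}$-lift --- now with the added constraint that the choice must be made uniformly in $\ell$, and it is precisely what forces the case-by-case analysis of monodromy groups.
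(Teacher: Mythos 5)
Your overall architecture matches the paper's proof of Theorem \ref{compsystem}: potential automorphy over $L^+$, solvable descent to the Brauer subfields via Theorem \ref{solvabledescent}, Brauer induction applied to linear representations of the descended objects, and a case division according to the algebraic monodromy group. (Because of the regular unipotent forced by the Steinberg condition, the possible groups are $\mr{GSpin}_{2n+1}$, a principal $\mr{GL}_2$, or $\mr{G}_2\times\mb{G}_m$ when $n=3$ --- not ``lower-rank spin groups.'') The main organizational difference is that you run Brauer on $\op{std}$ first and use $\op{spin}$ only to resolve a sign, whereas the paper takes the spin-side virtual representation $R_{\iota_\ell}=\sum_j n_j\op{Ind}(\op{spin}(r_{\tilde\pi_{L_j^+},\iota_\ell})\otimes\psi_j)$ as the primary object and recovers $\op{std}$-compatibility afterwards from the plethysm $R_{\iota_\ell}^{\otimes 2}\otimes N(r_{\iota_\ell})^{-1}\cong\bigoplus_i\wedge^i\op{std}(r_{\iota_\ell})$.

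The gap sits exactly at the step you label ``the main obstacle'' but do not carry out. Having produced $P_{\iota_\ell}\colon\Gamma_{F^+}\to\mr{SO}_{2n+1}(\bQl)$ and a Tate lift with the prescribed Clifford norm, the set of such lifts is a torsor under quadratic characters of $\Gamma_{F^+}$; but the discrepancy between an arbitrary such lift and the representation you want is a priori only a quadratic character of $\Gamma_{L^+}$ (both restrict to lifts of $P(r_{\tilde\pi,\iota_\ell})$ with the same norm), and nothing guarantees it extends to $\Gamma_{F^+}$. Equivalently, you must prove that the spin-side Brauer virtual character is the character of $\op{spin}(r')$ for an honest lift $r'$ of $P_{\iota_\ell}$ defined over $F^+$; ``Zariski density lets one fix the sign'' is not an argument for this, and it is the crux of the whole theorem. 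The paper's resolution is substantive: it first shows the virtual representation $R_{\iota_\ell}$ is an honest irreducible representation (the argument of \cite[Theorem 5.5.1]{blggt}), then that its monodromy group is connected with $\ell$-independent formal character (\cite[Propositions 6.12, 6.14]{larsen-pink:lind}), and then invokes \cite[Theorem 4]{larsen-pink:invdim} on basic similarity classes to conclude that $G_{R_{\iota_\ell}}^{\mr{der}}\subset\mr{GL}_{2^n}$ factors through $(\mr{Spin}_{2n+1},\op{spin})$; a regular-unipotent argument (\cite[Lemma 3.5]{bcempp} plus Dynkin) pins down the monodromy, and \cite[Lemma 1.3]{ks} together with the plethysm assembles $N$, $\op{std}$, $\op{spin}$ into $\mr{GSpin}_{2n+1}$-compatibility. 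Without some argument of this kind your construction of $r_{\iota_\ell}$ in the generic case is incomplete; the non-generic cases likewise need their own treatments (composition of a principal homomorphism with a $\mr{GL}_2$-compatible system, and for $n=3$ the $\mr{G}_2$-factorization via \cite{chenevier:G2}).
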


Finally, we remark on what we cannot prove at present. While our mod $p$ potential automorphy result imposes no Steinberg local condition, the $p$-adic potential automorphy theorem does, and we do not know how to circumvent this. Certainly a corresponding improvement in the results of \cite{ks}, constructing $r_{\tilde{\pi}, \iota}$ for all L-cohomological cuspidal automorphic representations of $\mr{GSp}_{2n}(\mb{A}_{F^+})$, would allow us to extend these results to that generality. Even with such an extension, generalizing the results of \S \ref{sec:compatible} would be a more serious task.
\emph{Acknowledgments:} S.P. was supported by NSF grants DMS-1700759 and DMS-1752313.
S.P. thanks Wushi Goldring for enjoyable discussions related to the subject of this paper, and S.T. thanks Patrick Allen for many helpful conversations and for answering many of his questions. We are very grateful to an anonymous referee for a careful and critical reading of the first version of this paper.

\section{Notation and Conventions}\label{sec:notation}
Let $F$ be a field. Fix an algebraic closure $\ov{F}$ of $F$ and write $\Gamma_F$ for the absolute Galois group $\gal{\ov{F}}{F}$ of $F$. If $F$ is a number field, then for each place $v$ of $F$, we fix an embedding $\ov{F} \to \ov{F_v}$ into an algebraic closure of $F_v$, which gives rise to an injective group homomorphism $\Gamma_{F_v} \to \Gamma_F$. 
For any finite place $v$, let $k_v$ be the residue field of $v$ and let $\frob{v} \in \Gamma_{k_v}$ be the arithmetic Frobenius.
If $H$ is a group (typically the points over a finite field or a $p$-adic field of a reductive algebraic group), and there is a continuous group homomorphism $r: \Gamma_F \to H$, we will sometimes write $r|_v$ 
for $r|_{\Gamma_{F_v}}$, the restriction of $r$ to the decomposition group $\Gamma_{F_v}$. If $H$ acts on a finite-dimensional vector space $V$, we write $r(V)$ for the $\Gamma_F$-module induced by precomposing this action with $r$. (Typically $H$ will be a reductive algebraic group and $V$ will be its Lie algebra equipped with the adjoint action of $H$.) Let $\kappa: \Gamma_F \to \bZp^{\times}$ be the $p$-adic cyclotomic character and $\bkp$ be its reduction modulo $p$. We will always assume $p \neq 2$, and our main theorems will make stronger hypotheses on $p$.

Let $n \geq 1$ be an integer, let $\SO{2n+1}$ be the odd orthogonal group of rank $n$, which for convenience we will take to be defined by $\mr{SO}_{2n+1}(R)= \{g \in \mr{SL}_{2n+1}(R): {}^t g \cdot g=1\}$ for any ring $R$. In fact, we will always be free to enlarge the ring of definition and consider this group to be defined over the ring of integers $\mc{O}$ in a sufficiently large finite extension of $\Qp$, and in particular we will assume this is in fact the split form of the group. Let $\GSpin{2n+1}$ be the corresponding general spin group, so there is an exact sequence of algebraic groups
\[
1 \to Z \to \GSpin{2n+1} \xrightarrow{P} \SO{2n+1} \to 1
\]
where $Z \cong \mb G_m$ is the center of $\GSpin{2n+1}$. 
Let $N: \GSpin{2n+1} \to \mb G_m$ be the Clifford norm.
Note that $\gspin{2n+1}{\cmplx}$ is the Langlands dual of $\GSp{2n}$.
Let $\op{std}: \GSpin{2n+1} \to \GL{2n+1}$ be the composition of $P$ with the standard representation of $\mr{SO}_{2n+1}$.
Let $\op{spin}: \GSpin{2n+1} \to \GL{2^n}$ be the spin representation. For a homomorphism $r: \Gamma \to \GSpin{2n+1}$, we will frequently consider the composites $P(r): \Gamma \to \SO{2n+1}$ and $\op{std}(r): \Gamma \to \op{GL}_{2n+1}$. 

We recall the definition of the Clozel-Harris-Taylor group scheme $\mc G_n$ over $\ints$ which is defined as the semidirect product $(\GL{n} \times \GL{1}) \rtimes \{1,\jmath\}$ where $\jmath(g,a)\jmath=(a({}^t g)^{-1},a)$, and the similitude character $\nu: \mc G_n \to \GL{1}$ given by $\nu(g,a)=a$ and $\nu(\jmath)=-1$. The groups $\mc{G}_n$ will arise in the following setting. Suppose we have a homomorphism $r \colon \Gamma_F \to \mr{GSpin}_{2n+1}(R)$ for some field $F$ and some ring $R$. 
Let $F'/F$ be a quadratic extension of $F$, and define $\rho(r) \colon \Gamma_F \to \mc{G}_{2n+1}(R)$ as the composite
\[
\Gamma_F \xrightarrow{P(r) \times \mr{res}_{F'}} \mr{SO}_{2n+1}(R) \times \mr{Gal}(F'/F) \to \mc{G}_{2n+1}(R),
\]
where the last map sends $g\in \so{2n+1}{R}$ to its image in $\gl{2n+1}{R}$ and sends the nontrivial element of $\gal{F'}{F}$ to $\jmath$. 
By our choice of definition of $\mr{SO}_{2n+1}(R)$, $\rho(r)$ is a well-defined homomorphism. 
We will refer to $\rho(r)$ as the standard prolongation of $\op{std}(r)$ with respect to the extension $F'/F$ (compare \cite[\S 1.4]{calegari-emerton-gee}).

Let $T$ be a maximal torus of a reductive group $G$ over an algebraically closed field. A cocharacter $\mu \in X_*(T)$ is said to be regular if $<\mu,\alpha> \neq 0$ for all roots $\alpha$ of $T$ in $G$. 

We assume that the reader is familiar with Galois deformation theory and will use the standard terms and results of Galois deformation theory freely throughout this paper. The reader may refer to \cite[Section 1]{blggt} and \cite{bg:Gdef} for the details. We recall here some deformation-theoretic terminology. 
Given a topologically finite-generated profinite group $\Gamma$, a finite extension $E/\Qp$ with ring of integers $\mc O$ and residue field $k$, a reductive algebraic group $G$ defined over $\mc O$ and a continuous homomorphism $\bar{r}: \Gamma \to G(k)$, let $R^{\Box}_{\mc{O}, \bar{r}}$ be the universal lifting ring representing the functor sending a complete local noetherian $\mc O$-algebra $R$ with residue field $k$ to the set of lifts $r: \Gamma \to G(R)$ of $\bar{r}$. We will always leave the $\mc{O}$ implicit, writing only $R^{\Box}_{\bar{r}}$, and at various points in the argument we enlarge $\mc{O}$; see \cite[Lemma 1.2.1]{blggt} for a justification of (the harmlessness of) this practice. We write $R_{\brho}^{\Box} \otimes \bQp$ for $R_{\mc O, \brho}^{\Box} \otimes_{\mc O} \bQp$ for any particular choice of $\mc{O}$, and again by \cite[Lemma 1.2.1]{blggt}, $R_{\brho}^{\Box} \otimes \bQp$ is independent of the choice of $E$.

Suppose moreover that $\chi \colon G \to A$ is a morphism to an abelian group scheme over $\mc{O}$, and $\theta: \Gamma \to \mc O^{\times}$ is a continuous character whose mod $p$ reduction equals $\chi(\bar{r}): \Gamma \to k^{\times}$. Then we write $R^{\Box, \theta}_{\bar{r}}$ (again, leaving $\mc O$ implicit) for the quotient of $R^{\Box}_{\bar{r}}$ representing the functor sending a complete local noetherian $\mc O$-algebra $R$ with residue field $k$ to the set of liftings $r: \Gamma \to G(R)$ of $\bar{r}$ with fixed similitude character $\theta$, i.e. $\chi(r)=\theta$. We will apply this convention when the character $\chi$ is the Clifford norm $N \colon \mr{GSpin}_{2n+1} \to \mb{G}_m$, and when it is the character $\nu \colon \mc{G}_{2n+1} \to \mb{G}_m$ defined above.

When $K/\Qp$ is a finite extension and $\Gamma$ is $\Gamma_K$, we moreover consider quotients of $R^{\Box, \theta}_{\bar{r}}$ having fixed inertial type and $p$-adic Hodge type. The fundamental analysis here is due to Kisin (\cite{kisin:pst}), and the state of the art, and our point of reference, is \cite{bg:Gdef}, and we refer there for details. We will index $p$-adic Hodge types of deformations $r \colon \Gamma_K \to G(\mc{O})$ by collections $\underline{\mu}(r)=\{\mu(r, \tau)\}_{\tau \colon K \to \bQp}$ of (conjugacy classes of) Hodge-Tate co-characters, and write $R^{\Box, \theta, \underline{\mu}(r)}_{\bar{r}}$ for the $\Zp$-flat quotient of $R^{\Box, \theta}_{\bar{r}}$ whose points in finite local $E$-algebras are precisely those of $R^{\Box, \theta}_{\bar{r}}$ that are moreover potentially semi-stable with $p$-adic Hodge type $\underline{\mu}(r)$. We likewise consider the quotients with fixed inertial type $\sigma$, $R^{\Box, \theta, \un{\mu}(r), \sigma}_{\bar{r}}$, referring to \cite[\S 3.2]{bg:Gdef} for details.  

We fix, \emph{once and for all}, a field isomorphism $\iota: \cmplx \xrightarrow{\sim} \bQp$. This will be used in associating $p$-adic Galois representations to automorphic representations. We will typically note the use of this isomorphism explicitly, with one exception: for type $A_0$ Hecke characters $\chi \colon \mathbb{A}_F^\times/F^\times \to \cmplx^\times$ of a number field $F$, we will continue to write $\chi \colon \Gamma_F \to \bQp^\times$ for the $p$-adic Galois character associated to $\chi$ (and similarly for characters of localizations $F_v^\times$). Thus, we leave the isomorphism $\iota$ implicit and the global (or local) reciprocity map implicit; we have done this to simplify the notation, since we do not expect it to cause any confusion. These local and global reciprocity maps are normalized as in \cite{ks}, which is to say geometric Frobenii correspond to uniformizers.

\section{A lifting theorem}\label{sec:lifting}
We prove a geometric lifting theorem for $\GSpin{2n+1}(\bFp)$-valued representations analogous to \cite[Theorem 4.3.1]{blggt} or \cite[Theorem 4.2.11]{calegari-emerton-gee}. We will use a special case of this theorem in \S \ref{sec:automorphy} as a first step in establishing our mod $p$ potential automorphy theorem. In this section we will prove something rather more general than what is needed for \S \ref{sec:automorphy}, although we have not attempted to optimize the results. The lifting theorem has global hypotheses---that the mod $p$ representation be odd and irreducible, essentially---and then a local hypothesis on the existence of suitable de Rham lifts at places above $p$. A similar theorem, with less restrictive local hypotheses, follows from \cite[Theorem A]{fkp:reldef}, but the methods of that paper cannot yield the minimal lifts produced here via potential automorphy (nor of course can it show they are potentially automorphic).

We will freely make use of the definition of ``globally realizable" components of potentially semistable local deformation rings from \cite[Definition 2.1.9]{calegari-emerton-gee}, and we refer the reader there for details.
Let $K^+$ be a totally real field, and let $\bar r: \Gamma_{K^+} \to \gspin{2n+1}{k}$ be a continuous representation, where $k$ is the residue field of the ring of integers $\mc{O}$ in a finite extension $E$ of $\Qp$ (which we will feel free to enlarge without comment). Fix a geometric lift $\theta: \Gamma_{K^+} \to \bZp^{\times}$ of $N \circ \bar r: \Gamma_{K^+} \to \bFp^{\times}$. 
\begin{hyp}\label{hypotheses}
Assume that $\bar{r}$ satisfies the following hypotheses. There exists a CM quadratic extension $K/K^+$, not containing $\zeta_p$, such that 
\begin{enumerate}
\item The restriction $\op{std}(\bar r)|_{\Gal{K(\mu_p)}}$ is irreducible.\footnote{We note that this is strictly stronger than assuming $\bar{r}|_{\Gal{K(\mu_p)}}$ is irreducible. Compare \cite{fkp:Girr}.}
\item The prime $p$ satisfies $p > 2(2n+1)$.
\item The representation $\bar r$ is odd, meaning that for all infinite places $v$ of $K^+$, with corresponding conjugacy class of complex conjugations $c_v \in \Gamma_{K^+}$, $\bar r(c_v)$ is a Chevalley involution in $\GSpin{2n+1}$, i.e. $\dim \mf g^{\mr{Ad}(\bar r(c_v))=1}=(\dim \mf g)/(\dim \mf b)$, where $\mf g \cong \mf{so}_{2n+1}$ is the commutator subalgebra of the Lie algebra of $\GSpin{2n+1}$, equipped with an action of $\Gamma_{K^+}$ induced by $\bar r$ and the adjoint representation, and $\mf b \subset \mf g$ is any Borel subalgebra of $\mf g$.
\item 
For each $v \vert p$, there is a lift $r_v \colon \Gal{K^+_v} \to \mr{GSpin}_{2n+1}(\bZp)$ of $\bar r|_{K^+_v}$ such that 
\begin{itemize}
\item $N \circ r_v=\theta|_{\Gamma_{K^+_v}}$.
\item $r_v$ is de Rham, with some $p$-adic Hodge type $\un{\mu}(r_v)$ and some inertial type $\sigma_v$, and there is an irreducible component $\mc{C}(r_v)$ of $R^{\Box, \theta, \un{\mu}(r_v), \sigma_v}_{\bar{r}}[1/p]$ containing $r_v$ that, under the map of lifting rings 
\[
\op{Spec} R_{\bar{r}}^{\Box, \theta, \un{\mu}(r_v), \sigma_v} \to \op{Spec} R^{\Box, \delta_{K/K^+}, \un{\mu}(\rho(r_v)), \rho(\sigma_v)}_{\rho(\bar{r})} 
\]
induced by the standard prolongation (with respect to $K/K^+$), maps to a globally realizable component $\mc{C}(\rho(r_v))$. Here $\delta_{K/K^+}$ is the unique nontrivial quadratic character of $\Gal{K^+}$ that is trivial on $\Gal{K}$.
\item In particular, the previous item implies that, for each embedding $\tau: K^+_v \to \bQp$, the Hodge-Tate cocharacters $\mu(\op{std}(r_v), \tau)$ are regular, and consequently the Hodge-Tate cocharacters $\mu(r_v, \tau)$ are regular.
\end{itemize}
\end{enumerate}
\end{hyp}
\begin{rmk}\label{blggtlift}
If we were basing our arguments on the less general results of \cite{blggt}, then here we would instead assume $\op{std}(r_v)$ is regular and potentially diagonalizable in the sense of \cite[\S 1.4]{blggt}. We would then have a version of the above hypotheses without mention of the extension $K$, and we would choose $K$ not containing $\zeta_p$ such that $\op{std}(\bar{r})|_{\Gal{K(\mu_p)}}$ remained irreducible, and such that all finite primes in $S$ split in $K/K^+$. We would also only need to assume $p \geq 2n+4$, in order to apply the theorems of \cite{blggt} and Lemma \ref{defcompare}. Then we would obtain a version of Theorem \ref{lifting thm} below by invoking \cite[Theorem 4.3.1]{blggt} instead of \cite{calegari-emerton-gee} in the arguments below. To see that this potentially diagonalizable case is contained in our current arguments, we would choose $\mc{C}(r_v)$ to be a potentially crystalline component (if $\op{std}(r_v)$ is potentially crystalline, then so is $r_v$, but $r_v$ could conceivably lie at an intersection of crystalline and semi-stable components), mapping into a potentially crystalline component containing the potentially diagonalizable point $\rho(r_v)$. Any potentially diagonalizable point lies on some globally realizable component (\cite[Remark 2.1.14]{calegari-emerton-gee}), which (by the argument given to justify this assertion) is moreover a potentially crystalline component. Since potentially crystalline deformation rings are regular, there are no other potentially crystalline components passing through $\rho(r_v)$. Hence, whatever potentially crystalline component $\mc{C}(r_v)$ we chose must map to a globally realizable component $\mc{C}(\rho(r_v))$, namely the unique potentially crystalline component passing through $\rho(r_v)$.
\end{rmk}
Let $S$ be a finite set of primes of $F^+$ containing the primes above $p$ and all primes where $\bar{r}$ or $\theta$ is ramified. To complement our assumption on the existence of $r_v$ for $v \vert p$, we have the following lemma:
\begin{lem}\label{booherlemma}
For all places $v \in S$ not above $p$, there is a lift $r_v \colon \Gamma_{K^+_v} \to \mr{GSpin}_{2n+1}(\bZp)$ of $\bar{r}|_{\Gamma_{K^+_v}}$ having $N(r_v)= \theta|_{\Gamma_{K^+_v}}$.
\end{lem}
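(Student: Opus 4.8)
The statement to prove is Lemma~\ref{booherlemma}: for each place $v \in S$ not above $p$, the residual representation $\bar r|_{\Gamma_{K^+_v}} \colon \Gamma_{K^+_v} \to \GSpin{2n+1}(k)$ admits a lift to characteristic zero with prescribed Clifford norm $\theta|_{\Gamma_{K^+_v}}$.

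The plan is to reduce this to a known lifting result for $\ell$-adic representations of local Galois groups (here $\ell \neq p$ is the residue characteristic of $v$), the relevant reference being work of Booher (which the lemma's name suggests), or alternatively the general local lifting results in the deformation-theory literature for reductive groups. First I would recall the structure of $\Gamma_{K^+_v}$: it is topologically generated by a lift of Frobenius and the tame/wild inertia, and since $\ell \neq p$ its pro-$p$ completion is manageable; the obstruction to lifting a representation into $G(k)$ to $G(\mathcal O)$ step by step lives in $H^2(\Gamma_{K^+_v}, \mathfrak g)$ for $\mathfrak g$ the Lie algebra with the adjoint action. The key point is that one wants not merely \emph{a} lift but one with a fixed Clifford norm; so I would work with the fixed-similitude lifting ring $R^{\Box,\theta}_{\bar r|_v}$ and show it admits a $\bZp$-point.

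The key steps, in order: (i) Choose $\mathcal O$ large enough that all relevant eigenvalues are rational over it. (ii) Invoke the local lifting theorem for $\GSpin{2n+1}$: Booher's thesis/papers establish that for $v \nmid p$, liftings of $\bar r|_v$ to $\bZp$ exist, and moreover one can prescribe the similitude character — indeed, since $\GSpin{2n+1} \to \SO{2n+1}$ has central kernel $\mb G_m$ and $N$ trivializes on this kernel appropriately, one first lifts the composite $P(\bar r)|_v \colon \Gamma_{K^+_v} \to \SO{2n+1}(k)$ (using that $\SO{2n+1}$ local lifting with $\ell \neq p$ is known, e.g.\ because $R^\Box_{\bar r|_v}$ for $\SO{2n+1}$ is flat over $\mathcal O$ of the expected dimension — this is in Booher–Patrikis or in \cite{bg:Gdef}), then lift the resulting $\SO{2n+1}(\bZp)$-representation along $P$. (iii) To get the Clifford norm right: given any lift $\tilde r \colon \Gamma_{K^+_v} \to \GSpin{2n+1}(\bZp)$ of $\bar r|_v$, the character $N(\tilde r) \theta|_v^{-1} \colon \Gamma_{K^+_v} \to \bZp^\times$ is trivial mod $p$; since $1 + p\bZp$ is a $\Zp$-module and torsion-free (for $p$ odd), it has a square root, so we may twist $\tilde r$ by the character $\Gamma_{K^+_v} \to Z(\bZp) = \bZp^\times$ given by a square root of $\theta|_v N(\tilde r)^{-1}$ — noting $N$ restricted to the center $Z \cong \mb G_m$ is the squaring map — thereby arranging $N(\tilde r) = \theta|_v$ exactly, while not disturbing the reduction since $\theta|_v N(\tilde r)^{-1} \equiv 1 \pmod p$ and its square root is also $\equiv 1 \pmod p$.

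I expect the main obstacle to be step (ii): cleanly citing or reconstructing the fact that $\bar r|_v$ lifts at all for these groups when $\ell \neq p$. For $\GL{N}$ this is classical (Clozel–Harris–Taylor style arguments, or the fact that $\ell \neq p$ local deformation rings are flat of the expected dimension), but for $\SO{2n+1}$ and $\GSpin{2n+1}$ one needs either Booher's work on local deformation rings for general reductive $G$ at $v \nmid p$, or a hands-on argument: lift the image of tame inertia (a cyclic-by-cyclic group of prime-to-$p$ order, so its projective-limit representations lift by Teichmüller/semisimplicity, using that $H^i$ of a finite prime-to-$p$ group with coefficients in a $\Zp$-module vanishes for $i>0$), lift wild inertia (a pro-$\ell$ group, again prime to $p$), and then lift Frobenius, which only requires finding a single element of $\GSpin{2n+1}(\bZp)$ with the right reduction conjugating the inertia lift compatibly — a smoothness argument for $\GSpin{2n+1}$ over $\bZp$. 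The twisting argument in step (iii) is routine once a lift exists; the subtlety is only in making sure the square-root twist lands in $1 + p\bZp$, which holds since $p$ is odd.
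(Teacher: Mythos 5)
Your proposal is correct and follows essentially the same route as the paper: the paper's proof is exactly to reduce to lifting $P(\bar r)|_{\Gamma_{K^+_v}}$ for $\mr{SO}_{2n+1}$ and cite Booher's thesis. The only difference is packaging: the paper compresses your steps (ii)--(iii) into the single observation that, since $p \neq 2$, the natural map $R^{\Box}_{P(\bar{r})|_v} \to R^{\Box, \theta}_{\bar{r}|_v}$ is an isomorphism (so no separate ``lift along $P$'' or square-root twist is needed), which is precisely the unique-$2$-divisibility of $1+\mf{m}$ that you invoke in step (iii).
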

\begin{proof}
Since $p \neq 2$, the natural map of lifting rings $R^{\Box}_{P(\bar{r})|_{\Gamma_{K^+_v}}} \to R^{\Box, \theta}_{\bar{r}|_{\Gamma_{K^+_v}}}$ is an isomorphism (see Lemma \ref{defcompare} for more details), so it suffices to find a lift of $P(\bar{r}|_{\Gamma_{K^+_v}})$. Such a lift exists by Booher's thesis (\cite{boo16}).
\end{proof}
From now on, we will fix some such lift $r_v$. 
We reiterate that we let $\mc{O}$ be the ring of integers in a finite extension $E$ of $\Qp$, enlarged if necessary so that all of the above data are defined over $\mc{O}$. 

We now state the main result of this section:
\begin{thm}\label{lifting thm}
Retain the assumptions of Hypothesis \ref{hypotheses}. Let $S$ be a finite set of places of $K^+$ such that $S$ contains all places above $p$ and all places where $\theta$ or $\bar{r}$ is ramified. Fix lifts $r_v$ and components $\mc{C}(r_v)$ for $v \vert p$ as in Hypothesis \ref{hypotheses}, and fix lifts $r_v$ for $v \in S \setminus \{v \vert p\}$ as in Lemma \ref{booherlemma}. Then $\bar r$ has a lift $r: \Gamma_{K^+} \to \gspin{2n+1}{\bQp}$ unramified outside $S$ such that
\begin{itemize}
    \item $N(r)= \theta$.
    \item For each $v\in S$ not lying above $p$, $r|_v:= r|_{\Gamma_{K^+_v}}$ and $r_v$ lie on the same irreducible component of $\op{Spec}(R^{\Box, \theta}_{\bar r|_v} \otimes \bQp)$.
    \item For each $v \vert p$, $r|_v$ lies on $\mc{C}(r_v)$,
    and in particular is potentially semistable with the given Hodge-Tate cocharacters and inertial types. 
\end{itemize} 
\end{thm}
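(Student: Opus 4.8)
The plan is to transfer the problem to the polarized $\GL{2n+1}$-valued setting, where the lifting theorems of \cite{blggt} and \cite{calegari-emerton-gee} apply, and then to descend the resulting characteristic-zero lift back to $\GSpin{2n+1}$. Since $p \neq 2$, the central torus $Z \cong \mb{G}_m$ of $\GSpin{2n+1}$ contributes no obstruction once the Clifford norm is pinned to $\theta$, so lifting $\bar r$ with $N(r) = \theta$ is equivalent to lifting $P(\bar r) \colon \Gamma_{K^+} \to \SO{2n+1}(k)$; and, $2n+1$ being odd, lifting $P(\bar r)$ through $\SO{2n+1} \hookrightarrow \GL{2n+1}$ amounts to producing a self-dual lift of $\op{std}(\bar r)$ whose (symmetric) polarization reduces to the standard form --- which via the standard prolongation with respect to the CM extension $K/K^+$ is a $\mc{G}_{2n+1}$-valued deformation of $\rho(\bar r)$ with similitude $\delta_{K/K^+}$. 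The content of Lemma \ref{defcompare} is that these identifications hold at the level of suitably decorated framed and unframed deformation rings, compatibly with local conditions: at $v \mid p$ the component $\mc{C}(r_v)$ matches the globally realizable component $\mc{C}(\rho(r_v))$ furnished by Hypothesis \ref{hypotheses}(4), and for $v \in S$ away from $p$ the map $R^{\Box}_{P(\bar r)|_v} \to R^{\Box, \theta}_{\bar r|_v}$ is an isomorphism, so Lemma \ref{booherlemma} supplies the local lifts there.

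Granting that reduction, I would set up the global deformation problem for $\rho(\bar r)$: lifts unramified outside $S$, with $\nu = \delta_{K/K^+}$, lying on $\mc{C}(\rho(r_v))$ at $v \mid p$ and on the component through $\rho(r_v)$ at the other places of $S$. Then I would verify the hypotheses of \cite[Theorem 4.2.11]{calegari-emerton-gee} (or, in the potentially diagonalizable case of Remark \ref{blggtlift}, \cite[Theorem 4.3.1]{blggt}). Irreducibility of $\op{std}(\bar r)|_{\Gamma_{K(\mu_p)}}$ (Hypothesis \ref{hypotheses}(1)) together with $p > 2(2n+1)$ forces the relevant mod $p$ image to be adequate by the standard criteria in large residue characteristic; oddness of $\bar r$ (Hypothesis \ref{hypotheses}(3)) yields the oddness/polarization hypothesis on $\rho(\bar r)$, using that a Chevalley involution of $\GSpin{2n+1}$ maps under $\op{std}$ to an involution of the correct signature; $\rho(\bar r)$ is polarized by construction; $K/K^+$ is CM and omits $\zeta_p$; and the de Rham conditions at $v \mid p$ are exactly the globally realizable components those theorems require. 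Applying the cited lifting theorem --- whose proof combines the Khare-Wintenberger method of \cite{kw} with the finiteness of the relevant deformation/Hecke ring coming from $R = T$ --- produces a geometric lift $\rho \colon \Gamma_{K^+} \to \mc{G}_{2n+1}(\bQp)$ of $\rho(\bar r)$, unramified outside $S$, with $\nu(\rho) = \delta_{K/K^+}$ and with $\rho|_v$ on the prescribed component for each $v \in S$.

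Finally I would descend: by Lemma \ref{defcompare} the $\bQp$-point $\rho$ corresponds to a unique $\bQp$-point $r \colon \Gamma_{K^+} \to \GSpin{2n+1}(\bQp)$ lifting $\bar r$ with $N(r) = \theta$. Concretely, $\rho|_{\Gamma_K}$ preserves a symmetric form reducing to the standard one, hence after a conjugation trivial modulo $\mf{m}$ lands in $\SO{2n+1}$ and descends to a $\mc{G}_{2n+1}$-valued representation of the form $\rho(r')$ for an $\SO{2n+1}$-valued $r'$, which then lifts uniquely through $P$ once the Clifford norm $\theta$ is imposed. The local-component assertions on $r|_v$ follow from those on $\rho|_v$ via the compatibility in Lemma \ref{defcompare}; $r$ is unramified outside $S$ because $\rho$ is; and in particular at $v \mid p$ the restriction $r|_v$ lies on $\mc{C}(r_v)$ and so is potentially semistable with the given Hodge-Tate cocharacters and inertial type. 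This $r$ is the asserted lift.

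I expect the main obstacle to be the deformation-ring comparison of Lemma \ref{defcompare}, and above all its compatibility with the decorations: fixed similitude character, fixed inertial and $p$-adic Hodge types, and especially the matching of irreducible \emph{components} of the potentially semistable local deformation rings under the standard prolongation. Because $\op{std}$ is not faithful on $\GSpin{2n+1}$, one cannot argue with $\op{std}(\bar r)$ alone; one must carry the Clifford norm throughout, verify that $Z$ contributes nothing to the obstruction theory (this is where $p \neq 2$ enters), and check that orthogonal self-duality is preserved under deformation and that the two sides' component structures genuinely correspond --- precisely the scenario Hypothesis \ref{hypotheses}(4) is built to feed into \cite{calegari-emerton-gee}. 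A secondary point requiring care is confirming that irreducibility of $\op{std}(\bar r)|_{\Gamma_{K(\mu_p)}}$ supplies exactly the adequacy input needed under the bound $p > 2(2n+1)$.
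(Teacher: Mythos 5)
Your setup (the three deformation problems for $\bar r$, $P(\bar r)$, and $\rho(\bar r)$, the comparison of Lemma \ref{defcompare}, and the appeal to \cite[Theorem 4.2.11]{calegari-emerton-gee}) matches the paper, but your final ``descent'' step has a genuine gap, and it is exactly the step where the real content lies. The maps of Lemma \ref{defcompare} run $R^{\mr{univ}}_{\mr{GL}} \twoheadrightarrow R^{\mr{univ}}_{\mr{SO}} \xrightarrow{\sim} R^{\mr{univ}}_{\mr{GSpin}}$; on spectra this is a \emph{closed immersion} $\op{Spec} R^{\mr{univ}}_{\mr{GSpin}} \hookrightarrow \op{Spec} R^{\mr{univ}}_{\mr{GL}}$, so a given $\bQp$-point of the $\mc{G}_{2n+1}$-deformation ring does not pull back to one of the $\GSpin{2n+1}$-deformation ring --- it would have to happen to lie on the closed subscheme. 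Your concrete justification fails at the same spot: the lift $\rho$ produced by \cite{calegari-emerton-gee} is \emph{polarized}, i.e.\ $\rho|_{\Gamma_K}$ is conjugate self-dual ($\varrho^c \cong \varrho^\vee$), which is strictly weaker than preserving a symmetric form. For instance, twisting a genuinely orthogonal lift by a character $\chi$ of $\Gamma_K$ with $\chi\chi^c=1$, $\chi^2\neq 1$, $\chi\equiv 1 \bmod \mf{m}$ produces a polarized lift of the same residual representation with the same multiplier that is not self-dual and does not factor through $\mr{SO}_{2n+1}$. Nothing in the CEG theorem or in the local conditions rules this out, so there is no descent of the specific point $\rho$.

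The paper's proof uses the surjection in the only direction it can be used: to transfer $\mc{O}$-\emph{finiteness}. The real output of the Khare--Wintenberger/CEG machinery (or, without inspecting the proof, \cite[Lemma 1.4.28]{calegari-emerton-gee}) is that $R^{\mr{univ}}_{\mr{GL}}$ is finite over $\mc{O}$; finiteness passes to the quotient $R^{\mr{univ}}_{\mr{GSpin}}$. One then needs a \emph{separate} argument that $R^{\mr{univ}}_{\mr{GSpin}}$ is nonzero of Krull dimension at least one, which is supplied by \cite[Theorem B]{bg:Gdef} using oddness of $\bar r$, vanishing of $H^0(\Gamma_{K^+},\bar r(\mf{so}_{2n+1})(1))$ (from irreducibility of $\op{std}(\bar r)|_{\Gamma_{K(\mu_p)}}$), and regularity of the Hodge--Tate cocharacters. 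A finite $\mc{O}$-algebra of positive Krull dimension has a $\bQp$-point, and that point is the lift $r$. In your write-up oddness appears only as an input to the polarization hypothesis of CEG; its essential role in lower-bounding $\dim R^{\mr{univ}}_{\mr{GSpin}}$ is missing, and without it (or some substitute) you cannot produce the $\GSpin{2n+1}$-valued lift at all.
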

The rest of this section will be dedicated to the proof of Theorem \ref{lifting thm}, using a version of the lifting argument of Khare-Wintenberger (\cite{kw}). We will define global deformation conditions for $\bar r$, $P(\bar r)$, and the standard prolongation $\rho(\bar{r})$ with respect to $K/K^+$. Recall that $\rho(\bar{r}): \Gamma_{K^+} \to \mc G_{2n+1}(\bFp)$ is the composite 
\[
\Gamma_{K^+} \xrightarrow{P(\bar r) \times res_K} \so{2n+1}{\bFp} \times \gal{K}{K^+} \to \mc G_{2n+1}(\bFp).
\]
Then we will show that there are natural finite maps between the corresponding deformation rings and use the $\Zp$-finiteness of the deformation ring for the group $\mc G_{2n+1}$ 
to conclude that the deformation ring for the group $\GSpin{2n+1}$ is $\Zp$-finite; this, combined with a standard calculation of its Krull dimension (as in \cite{bg:Gdef}), will imply that it has a $\bQp$-point.

Recall from the discussion in \S \ref{sec:notation} that for each $v\in S$ we can consider the lifting rings $R^{\Box, \theta}_{\bar r|_v}$, $R^{\Box}_{P(\bar r)|_v}$, and 
$R^{\Box, \delta_{K/K^+}}_{\rho(\bar{r})|_v}$.
For $v \in S$ not above $p$, choose an irreducible component $\mc{C}(r_v)$ (resp. $\mc{C}(P(r_v))$, 
$\mc{C}(\rho(r_v))$)
of $R^{\Box, \theta}_{\bar{r}|_v} \otimes \bQp$ (resp. $R^{\Box}_{P(\bar {r})|_v} \otimes \bQp$, 
$R^{\Box, \delta_{K/K^+}}_{\rho(\bar{r})|_v} \otimes \bQp$) containing $r_v$ (resp. $P(r_v)$, 
$\rho(r_v)$)
such that under the natural maps 
\[ 
\op{Spec} R^{\Box, \theta}_{\bar r|_v} \otimes \bQp \to \op{Spec} R^{\Box}_{P(\bar r)|_v}\otimes \bQp \to 
\op{Spec} R^{\Box, \delta_{K/K^+}}_{\rho(\bar{r})|_v} \otimes \bQp,
\]
$\mc C(r_v)$ maps to $\mc C(P(r_v))$ and $\mc C(P(r_v))$ maps to 
$\mc{C}(\rho(r_v))$. 

Similarly, for $v \vert p$, the fixed inertial type and fixed $p$-adic Hodge type of $r_v$ induces corresponding data for $P(r_v)$ and 
$\rho(r_v)$, and the fixed component $\mc{C}(r_v)$ maps to components $\mc{C}(P(r_v))$ of $R^{\Box, \un{\mu}(P(r_v)), P(\sigma_v)}_{P(\bar{r})|_v}\otimes \bQp$ and $\mc{C}(\rho(r_v))$ of
$R^{\Box, \delta_{K/K^+}, \un{\mu}(\rho(r_v)), \rho(\sigma_v)}_{\rho(\bar{r})|_v}\otimes \bQp$ (the latter as in the theorem hypotheses).

We now define three global deformation rings, for $\bar{r}$, $P(\bar{r})$, and $\rho(\bar{r})$, by considering lifts that locally lie on the irreducible components we have just specified. More precisely, following the formalism of \cite[\S 4.2]{bg:Gdef}, we let $R^{\mr{univ}}_{\mr{GSpin}}$ be the quotient of the universal, unramified outside $S$, fixed Clifford norm $\theta$, deformation ring for $\bar{r}$ corresponding to the fixed set of components $\{\mc{C}(r_v)\}_{v \in S}$. We similarly define $R^{\mr{univ}}_{\mr{SO}}$, corresponding to the local components $\{\mc{C}(P(r_v))\}$ and $R^{\mr{univ}}_{\mr{GL}}$ corresponding to the local components $\{\mc{C}(\rho(r_v))\}$ (and fixed polarization $\delta_{K/K^+}$). These rings all exist by absolute irreducibility of the respective residual representations, and by the discussion in \cite[\S 4.2]{bg:Gdef} (\cite[Lemma 3.4.1]{bg:Gdef} plays a key role here).
By construction, there are natural $\mc O$-algebra maps
\[
R^{\mr{univ}}_{\mr{GL}} \to R^{\mr{univ}}_{\mr{SO}} \to R^{\mr{univ}}_{\mr{GSpin}}.
\]

\begin{lem}\label{defcompare}
The map $R^{\mr{univ}}_{\mr{GL}} \to R^{\mr{univ}}_{\mr{SO}}$ is surjective, and the map $R^{\mr{univ}}_{\mr{SO}} \to R^{\mr{univ}}_{\mr{GSpin}}$ is an isomorphism.
\end{lem}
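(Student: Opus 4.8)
The plan is to analyze the two maps $R^{\mr{univ}}_{\mr{GL}} \to R^{\mr{univ}}_{\mr{SO}}$ and $R^{\mr{univ}}_{\mr{SO}} \to R^{\mr{univ}}_{\mr{GSpin}}$ separately, in each case reducing the global statement to a purely local computation at each place $v \in S$ together with a cohomological comparison of tangent spaces and obstruction spaces. The basic principle, which underlies the formalism of \cite[\S 4.2]{bg:Gdef} and the Khare--Wintenberger method, is that a map of global deformation rings built from compatible local components is controlled by the corresponding maps of local lifting rings and by the adjoint Galois cohomology of the two ambient groups; when the relevant map of Lie algebras is an isomorphism (or induces an isomorphism after passing to the appropriate quotients), one gets an isomorphism of global rings, and when it is a surjection of framed local rings one gets a surjection globally.

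\textbf{The $\mr{SO}_{2n+1}$-to-$\mr{GSpin}_{2n+1}$ map.} First I would show $R^{\mr{univ}}_{\mr{SO}} \to R^{\mr{univ}}_{\mr{GSpin}}$ is an isomorphism. The key point is that the isogeny $P \colon \mr{GSpin}_{2n+1} \to \mr{SO}_{2n+1}$ has central kernel $Z \cong \mb{G}_m$, and since $p \neq 2$ the derived group $\op{Spin}_{2n+1}$ and $\mr{SO}_{2n+1}$ have the same Lie algebra $\mf{g} \cong \mf{so}_{2n+1}$ with the same adjoint action; thus fixing the Clifford norm $\theta$ rigidifies exactly the central $\mb{G}_m$-direction, and the deformation theory of $\bar r$ with fixed $N = \theta$ is governed by the same adjoint complex $H^\bullet(\Gamma_{K^+,S}, \mf{g}(\bar r))$ as that of $P(\bar r)$. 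Locally, one checks (this is the content referenced in the proof of Lemma \ref{booherlemma}) that for each $v \in S$ the natural map $R^{\Box}_{P(\bar r)|_v} \to R^{\Box,\theta}_{\bar r|_v}$ is an isomorphism: a lift of $P(\bar r|_v)$ lifts uniquely to a lift of $\bar r|_v$ with prescribed Clifford norm because the obstruction to lifting through $Z$ lives in $H^2(\Gamma_{K^+_v}, \mb{G}_m) $-type groups that vanish after fixing $\theta$ (using $p\neq 2$ and $H^1(\Gamma_{K^+_v},\ov{\mb F}_p) $-type torsor considerations, cf. Tate's theorem invoked in the introduction). Hence the chosen components $\mc{C}(P(r_v))$ and $\mc{C}(r_v)$ correspond under this local isomorphism, and the global deformation problems defined by these compatible local conditions are literally the same functor; therefore the induced map on universal rings is an isomorphism.

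\textbf{The $\mr{GL}_{2n+1}$-to-$\mr{SO}_{2n+1}$ map.} For surjectivity of $R^{\mr{univ}}_{\mr{GL}} \to R^{\mr{univ}}_{\mr{SO}}$, the point is that $R^{\mr{univ}}_{\mr{GL}}$ is the deformation ring for $\rho(\bar r) \colon \Gamma_{K^+} \to \mc{G}_{2n+1}(\ov{\mb F}_p)$ with fixed multiplier $\delta_{K/K^+}$, and the standard prolongation realizes $\mr{SO}_{2n+1}(R) \times \gal{K}{K^+}$ as a closed subgroup of $\mc{G}_{2n+1}(R)$, with the similitude character $\nu$ restricting trivially on $\mr{SO}_{2n+1}$ and to $\delta_{K/K^+}$ on $\gal{K}{K^+}$. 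A lift of $\rho(\bar r)$ need not a priori take values in the $\mr{SO}_{2n+1}$-part, so the two rings are genuinely different; but surjectivity only requires that every deformation arising from the $\mr{SO}$-side is induced from the $\mc{G}$-side, which is immediate since $\rho(-)$ is a functor sending $\mr{SO}$-deformations to $\mc{G}$-deformations and the chosen components were set up precisely so that $\mc{C}(P(r_v))$ maps into $\mc{C}(\rho(r_v))$ for every $v \in S$. Concretely, one checks the map on mod-$p^2$ points (equivalently, on cotangent spaces) is surjective: the tangent space of the $\mc{G}_{2n+1}$-problem involves $H^1$ of the Lie algebra $\mf{gl}_{2n+1}$ with the $\jmath$-twisted action, which contains $H^1(\mf{so}_{2n+1})$ as a direct summand (again using $p \neq 2$, so $\mf{gl}_{2n+1} = \mf{so}_{2n+1} \oplus \mathrm{Sym}^2$ decomposes $\gal{K}{K^+}$-equivariantly), and the projection onto this summand realizes the cotangent map of $R^{\mr{univ}}_{\mr{GL}} \to R^{\mr{univ}}_{\mr{SO}}$ as surjective; a standard complete-local-ring argument (Nakayama) upgrades this to surjectivity of the ring map. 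The main obstacle is getting the local component bookkeeping exactly right at $v \mid p$ --- making sure the fixed $p$-adic Hodge type and inertial type data $\un\mu(r_v), \sigma_v$ transport correctly through $\op{std}$ and $\rho$ so that the chosen components $\mc{C}(r_v) \mapsto \mc{C}(P(r_v)) \mapsto \mc{C}(\rho(r_v))$ are genuinely compatible in the sense demanded by \cite[\S 3.2, \S 4.2]{bg:Gdef} --- but this has essentially been arranged in Hypothesis \ref{hypotheses}(4) and the discussion preceding the lemma, so the proof reduces to assembling these local identifications with the two Lie-algebra decompositions above.
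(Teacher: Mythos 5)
Your proof is correct and follows the same overall architecture as the paper's: for $R^{\mr{univ}}_{\mr{SO}} \to R^{\mr{univ}}_{\mr{GSpin}}$, identify the two deformation problems by noting that fixing the Clifford norm reduces the adjoint complex of $\bar r$ to $H^\bullet(\Gamma_{K^+,S}, \mf{so}_{2n+1})$, matching that of $P(\bar r)$ on tangent and obstruction spaces and on local components; for $R^{\mr{univ}}_{\mr{GL}} \to R^{\mr{univ}}_{\mr{SO}}$, exhibit $\mf{so}_{2n+1}$ as a $\Gamma_{K^+,S}$-direct summand of $\mf{gl}_{2n+1}$, deduce injectivity on tangent spaces, dualize, and apply Nakayama. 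The one genuine divergence is how the direct-summand claim is justified: the paper invokes Serre's semisimplicity theorem (\cite[Proposition 2]{serre:ss}), which is the sole reason for the requirement $p \geq 2n+4$ flagged in the footnote to Hypothesis \ref{modppotauthyp}, whereas you use the explicit $\jmath$-equivariant decomposition of $\mf{gl}_{2n+1}$ into antisymmetric and symmetric matrices, which needs only $p \neq 2$; your route is more elementary and would in principle relax the bound on $p$ at this step. Two small imprecisions to fix: $P$ is not an isogeny (its kernel is $Z \cong \mb{G}_m$, not a finite group scheme), and the remark about obstructions in ``$H^2(\Gamma_{K^+_v}, \mb{G}_m)$-type groups'' is not the right mechanism --- the correct statement, as in the paper and as you say elsewhere, is simply that $\ker(dN)$ is the derived Lie algebra $\mf{so}_{2n+1}$ when $p \neq 2$, so the fixed-norm lifting problem for $\mr{GSpin}_{2n+1}$ and the lifting problem for $\mr{SO}_{2n+1}$ have identical tangent and obstruction theories, locally and globally.
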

\begin{proof}
The tangent space of $R^{\mr{univ}}_{\mr{GL}}$ is a subspace of $H^1(\Gamma_{K^+,S}, \rho(\bar{r})(\mf{gl}_{2n+1}))$ and
the tangent space of $R^{\mr{univ}}_{\mr{SO}}$ is a subspace of $H^1(\Gamma_{K^+,S}, P(\bar r)(\mf{so}_{2n+1}))$. For $p \geq 2n+4$, $\mf{gl}_{2n+1}$ is by \cite[Proposition 2]{serre:ss} a semisimple $\mr{SO}_{2n+1}$-module, so \textit{a fortiori} $P(\bar r)(\mf{so}_{2n+1})$ is a $\Gamma_{K^+, S}$-direct summand of $P(\bar{r})(\mf{gl}_{2n+1})$. It follows that the natural map $H^1(\Gamma_{K^+,S}, P(\bar r)(\mf{so}_{2n+1})) \to H^1(\Gamma_{K^+,S}, \rho(\bar{r})(\mf{gl}_{2n+1}))$ is injective; the dual map is surjective, and we conclude by Nakayama's lemma that the map on universal deformation rings without local conditions is surjective. It then follows immediately that $R^{\mr{univ}}_{\mr{GL}} \to R^{\mr{univ}}_{\mr{SO}}$ is surjective as well. 

The canonical map $R^{\mr{univ}}_{\mr{SO}} \to R^{\mr{univ}}_{\mr{GSpin}}$ is induced by a corresponding map on global deformation rings without local conditions; the latter is an isomorphism because it induces isomorphisms $H^i(\Gamma_{K^+, S}, \bar{r}(\mf{so}_{2n+1})) \xrightarrow{\sim} H^i(\Gamma_{K^+, S}, P(\bar{r})(\mf{so}_{2n+1}))$ (one of these copies of $\mf{so}_{2n+1}$ is the derived Lie algebra of $\mr{GSpin}_{2n+1}$, while the other is the Lie algebra of $\mr{SO}_{2n+1}$; the natural map between them is an isomorphism since $p$ is not $2$), which for $i=1$ and $i=2$ means that it identifies both the tangent spaces and obstruction spaces for the two deformation functors. The local lifting rings without local conditions are similarly isomorphic, and this implies that the specified components of the local lifting rings 
are isomorphic. That $R^{\mr{univ}}_{\mr{SO}} \to R^{\mr{univ}}_{\mr{GSpin}}$ is an isomorphism now follows.
\end{proof}

We now complete the proof of Theorem \ref{lifting thm}:
\begin{proof}[Proof of Theorem \ref{lifting thm}]
By our assumptions on $\bar r$, the representation $\rho(\bar{r})$ (strictly speaking, the pair $(\op{std}(\bar{r})|_{\Gamma_K}, \delta_{K/K^+})$ with its prolongation $\rho(\bar{r})$) satisfies the assumptions of \cite[Theorem 4.2.11]{calegari-emerton-gee}, so $(\op{std}(\bar{r})|_{\Gamma_K}, \delta_{K/K^+})$ lifts to a potentially automorphic compatible system $(\{s_{\lambda}\}_{\lambda}, \delta_{K/K^+})$ (here $\lambda$ is indexed over places of a suitable number field, or over field isomorphisms $\cmplx \xrightarrow{\sim} \bQl$). The method of proof of \textit{loc. cit.} implies that the deformation ring $R^{\mr{univ}}_{\mr{GL}}$ is $\mc{O}$-finite, since that is the means by which lifts are shown to exist. Not to impose on the reader to have to inspect the proof, we can simply take the conclusion of \textit{loc. cit.} and then invoke \cite[Lemma 1.4.28]{calegari-emerton-gee}.

Lemma \ref{defcompare} then implies that $R^{\mr{univ}}_{\mr{GSpin}}$ is $\mc O$-finite. We claim that $R^{\mr{univ}}_{\mr{GSpin}}$ has Krull dimension at least one. Indeed, this follows from \cite[Theorem B]{bg:Gdef}: the assumptions there are satisfied since we have assumed that $\bar r$ is odd, $\op{std}(\bar r)|_{K(\mu_p)}$ is irreducible (which under our assumption on $p$ implies that $H^0(\Gamma_{K^+},\bar r(\mf{so}_{2n+1})(1))$ vanishes), and the Hodge-Tate cocharacters are regular. Thus $R^{\mr{univ}}_{\mr{GSpin}}$ has a $\bQp$-point, which proves our theorem.  
\end{proof}

\section{Potential automorphy of mod $p$ and $p$-adic representations} \label{sec:automorphy}
In this section we will prove two potential automorphy theorems, one for $\mr{GSpin}_{2n+1}(\ov{\mb{F}}_p)$-valued representations, and one for $\mr{GSpin}_{2n+1}(\bZp)$-valued representations. For the first we will begin by applying a version of Theorem \ref{lifting thm}, but in fact we can and do get away with a somewhat simpler result, relying on \cite{blggt} rather than the refinement in \cite{calegari-emerton-gee}. Our $p$-adic potential automorphy theorem will however require \cite{calegari-emerton-gee} for its most general statement.

\subsection{The mod $p$ case} Let $F^+$ be a totally real field, and let $\bar r: \Gamma_{F^+} \to \gspin{2n+1}{\bFp}$ be a continuous mod $p$ representation satisfying the following slight relaxation of the conditions (1), (2), and (3) of Hypothesis \ref{hypotheses} (we replace $K^+$ by $F^+$ since eventually we will apply the result of \S \ref{sec:lifting} to some finite extension $K^+$ of this $F^+$):
\begin{hyp}\label{modppotauthyp}
Assume that $\bar{r}$ satisfies the following:
\begin{enumerate}
    \item $\op{std}(\bar{r})|_{\Gamma_{F^+(\mu_p)}}$ is absolutely irreducible.
    \item $p \geq 2n+4$.\footnote{The potential automorphy theorem \cite[Theorem 4.3.1]{blggt} we rely on needs the following hypothesis, somewhat weaker than the assumption in \cite{calegari-emerton-gee}: let $d$ be the maximal dimension of an irreducible subrepresentation of the restriction of $\op{std}(\bar{r})$ to the closed subgroup of $\Gamma_{F^+}$ generated by all Sylow pro-$p$-subgroups, and assume that $p \geq 2(d+1)$. In Lemma \ref{defcompare} above we use the slightly stricter requirement $p \geq 2n+4$.}
    \item $\bar{r}$ is odd.
\end{enumerate}
\end{hyp}
In essence, we retain the irreducibility and oddness hypotheses from \S \ref{sec:lifting}, but not those on the existence of sufficiently nice local lifts. In this section, we use the results in \cite{blggt} and \cite{ks} to prove that $\bar r$ is potentially automorphic in the sense that over a finite totally real extension of $F^+$, $\bar r$ is the mod $p$ reduction of the Galois representation attached in \cite{ks} to a suitable automorphic representation of $\GSp{2n}$. 

 Let $S$ be a finite set of finite places of $F^{+}$ including the places above $p$, the places where $\bar r$ is ramified, and one other auxiliary place $v_{\mr{St}}$ that is not above $p$. 
 Choose a CM quadratic extension $F$ of $F^+$ that is linearly disjoint from $F^+(\bar{r}, \mu_p)$, and in which all primes in $S$ split. For each $v\in S$, let $E_v/F^+_v$ be a finite extension over which $\bar r|_v$ becomes trivial. We may and do also assume that when $v=v_{\mr{St}}$ the residue field of $E_v$ has order $q^f$ where $f$ is even (this is inessential but convenient). By \cite[Lemma 4.1.2]{cht}, there is a finite totally real extension $K^+/F^+$ linearly disjoint from $F(\bar r, \mu_p)$ such that for each place $v$ of $S$ and each place $w$ of $K^+$ above $v$, the extension $E_v/F^+_v$ is isomorphic to $K^+_w/F^+_v$. Set $K=F K^+$. Continuing to write $S$ for the finite places of $K^+$ above $S$, we have the following result:
\begin{prop}\label{lift}
There is a choice of lift $\theta \colon \Gal{K^+} \to \bZp^\times$ of $N \circ \bar{r}|_{\Gal{K^+}}$ 
and a lift $r \colon \Gal{K^+} \to \mr{GSpin}_{2n+1}(\bZp)$ of $\bar{r}|_{\Gal{K^+}}$ with $N \circ r= \theta$ such that
\begin{itemize}
\item For all places $v \vert v_{\mr{St}}$ (which recall does not lie above $p$), the point corresponding to $r|_{\Gal{K^+_v}}$ lies on the same connected component of $R^{\Box, \theta}_{\bar{r}|_v}$ as a representation whose associated Weil-Deligne representation is an unramified twist of the local Langlands parameter of the Steinberg representation.\footnote{By this we just mean the principal parameter in the sense of \cite[\S 6]{gr97}; it is the expected local Langlands parameter of the Steinberg representation.}
\item For all $v \vert p$, the restriction $r|_{\Gal{K^+_v}}$ is potentially crystalline with 
regular Hodge-Tate cocharacters, and $\op{std}(r|_{\Gal{K^+_v}})$ is potentially diagonalizable. 
\end{itemize}
\end{prop}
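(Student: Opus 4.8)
The plan is to build good local lifts of $\bar r|_{\Gal{K^+}}$ at the places of $S$ and then invoke the \cite{blggt}-based version of Theorem~\ref{lifting thm} described in Remark~\ref{blggtlift}, applied to $\bar r|_{\Gal{K^+}}$ with the CM extension $K=FK^+$. The global inputs needed there are available by construction: $K$ does not contain $\zeta_p$ and $\op{std}(\bar r)|_{\Gal{K(\mu_p)}}$ is irreducible because $K^+$ is linearly disjoint from $F(\bar r,\mu_p)$ over $F^+$, so that $\op{std}(\bar r)(\Gal{K(\mu_p)})=\op{std}(\bar r)(\Gal{F^+(\mu_p)})$ and Hypothesis~\ref{modppotauthyp}(1) applies; $\bar r|_{\Gal{K^+}}$ is odd since $K^+$ is totally real and oddness passes to restrictions; $p\ge 2n+4$; and every prime of $S$ splits in $K/K^+$ because it splits in $F/F^+$. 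Moreover, for every place $w$ of $K^+$ above $S$ the restriction $\bar r|_{\Gal{K^+_w}}$ is trivial (as $K^+_w\iso E_v$ kills $\bar r|_v$), so $N\circ\bar r|_{\Gal{K^+}}$ is unramified at all places above $p$ and above $v_{\mr{St}}$; a routine character-lifting argument (Teichm\"uller-lift the associated finite-order Hecke character, possibly twisting by a power of $\kappa^{p-1}$) then produces a de Rham lift $\theta\colon\Gal{K^+}\to\bZp^\times$ of $N\circ\bar r|_{\Gal{K^+}}$ that is unramified outside $S$, crystalline at all $v\mid p$, and unramified at all $v\mid v_{\mr{St}}$. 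Fix such a $\theta$; the rest of the argument is the construction of the $r_v$.

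For $v\mid p$ I would produce a crystalline, torus-valued lift. Fix a maximal torus $T$ of $\mr{GSpin}_{2n+1}$ and a cocharacter $\nu\in X_*(T)$ with $N\circ\nu=\mathrm{id}$ (possible, since cocharacters of $\mr{GSpin}_{2n+1}$ may have half-integral $\mb G_m$-component, so $N$ is surjective on cocharacter lattices), and recall $\ker N=\mr{Spin}_{2n+1}$, whose cocharacter lattice contains $\mr{GSpin}_{2n+1}$-regular elements and is annihilated by no root. Set $t_0=\nu\circ\theta|_{\Gal{K^+_v}}$, a crystalline $T(\bZp)$-valued lift of the trivial representation with $N(t_0)=\theta|_{\Gal{K^+_v}}$; for each $\tau\colon K^+_v\to\bQp$ choose $\lambda_\tau$ in the cocharacter lattice of $T\cap\mr{Spin}_{2n+1}$ so that $\mu(t_0,\tau)+(p-1)\lambda_\tau$ is regular (the non-regular locus there is a finite union of proper affine hyperplanes), and let $t_1\colon\Gal{K^+_v}\to(T\cap\mr{Spin}_{2n+1})(\bZp)$ be a crystalline lift of the trivial representation assembled from powers $\kappa^{(p-1)m}$ with Hodge--Tate cocharacters $(p-1)\lambda_\tau$. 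Then $r_v:=t_0t_1$ is crystalline, reduces mod $p$ to $\bar r|_{\Gal{K^+_v}}$, has $N(r_v)=\theta|_{\Gal{K^+_v}}$, and has regular Hodge--Tate cocharacters at every $\tau$; hence $\op{std}(r_v)$ is a sum of crystalline characters with pairwise distinct Hodge--Tate weights, so it is regular and diagonalizable, and by Remark~\ref{blggtlift} (cf.\ \cite[Remark 2.1.14]{calegari-emerton-gee}) the potentially crystalline component $\mc C(r_v)$ through $r_v$ maps to a globally realizable component. This is exactly the $v\mid p$ input required by Theorem~\ref{lifting thm}.

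The main point is the place $v\mid v_{\mr{St}}$. Here $\bar r|_{\Gal{K^+_v}}$ is trivial, and after possibly enlarging $E_{v_{\mr{St}}}$ (keeping its residue degree even) I may assume $q_v:=\#k_v\equiv 1\pmod p$. Let $\varphi\colon\mr{SL}_2\to\mr{GSpin}_{2n+1}$ be the principal homomorphism and let $r^{\mathrm{St}}\colon\Gal{K^+_v}\to\mr{GSpin}_{2n+1}(\bQp)$ be the Steinberg-type parameter whose Weil--Deligne representation is the principal parameter, twisted by the unramified $T$-valued character $\nu\circ\theta|_{\Gal{K^+_v}}$ so that $N(r^{\mathrm{St}})=\theta|_{\Gal{K^+_v}}$ (note $\varphi$ lands in $\mr{Spin}_{2n+1}=\ker N$). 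Conjugating $r^{\mathrm{St}}$ by $t\in T(\bQp)$ with $\alpha(t)=\varpi$ for every simple root $\alpha$ changes neither the Weil--Deligne representation nor the Clifford norm, but moves the unipotent monodromy into the kernel of $\mr{GSpin}_{2n+1}(\mc O)\to\mr{GSpin}_{2n+1}(k)$; so $r_v:=t\,r^{\mathrm{St}}\,t^{-1}$ is $\mc O$-valued and, because $q_v\equiv 1\pmod p$ (so the Frobenius part $2\rho^\vee(q_v^{1/2})$ reduces to $1$ for a suitable square root) and the unramified twist reduces to the identity ($\theta|_{\Gal{K^+_v}}$ does, since $\bar r|_{\Gal{K^+_v}}$ is trivial), reduces mod $p$ to $\bar r|_{\Gal{K^+_v}}$. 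Thus $r_v$ is a lift of $\bar r|_{\Gal{K^+_v}}$ with $N(r_v)=\theta|_{\Gal{K^+_v}}$ whose Weil--Deligne representation is an unramified twist of the Steinberg parameter; let $\mc C(r_v)$ be the connected component of $R^{\Box,\theta}_{\bar r|_v}$ through it. At the remaining places of $S$, $\bar r|_{\Gal{K^+_v}}$ is trivial, so I take $r_v$ as in Lemma~\ref{booherlemma} and any component through it. Feeding $\theta$ and these local components into Theorem~\ref{lifting thm} produces $r\colon\Gal{K^+}\to\mr{GSpin}_{2n+1}(\bQp)$, unramified outside $S$, with $N(r)=\theta$ and $r|_{\Gal{K^+_v}}$ on $\mc C(r_v)$ for all $v\in S$: at $v\mid v_{\mr{St}}$ this is the Steinberg component, and at $v\mid p$ it is potentially crystalline with the regular Hodge--Tate cocharacters of $r_v$, with $\op{std}(r|_{\Gal{K^+_v}})$ on the same crystalline component as $\op{std}(r_v)$, hence potentially diagonalizable --- which is the proposition.

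I expect the hard part to be the $v_{\mr{St}}$ step: realizing the (visibly ramified) Steinberg parameter as an $\mc O$-valued lift of the trivial residual representation requires the ``lattice trick'' of the non-integral torus conjugation, and this is what forces the enlargement of $E_{v_{\mr{St}}}$ so that $q_v\equiv 1\pmod p$; at the same time one must arrange $N(r_v)=\theta|_{\Gal{K^+_v}}$, which is why $\theta$ was taken unramified at $v_{\mr{St}}$. One must also confirm the $\mr{GSpin}_{2n+1}$-analogues of the structural facts used (surjectivity of $N$ on cocharacter lattices, $\ker N=\mr{Spin}_{2n+1}$, the shape of the Steinberg local lifting ring, and the globally realizable criterion), but these are routine.
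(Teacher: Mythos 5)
Your proof is correct and follows essentially the same route as the paper's: explicit torus-valued crystalline lifts at $v \mid p$, a principal-$\mr{SL}_2$ Steinberg-type lift at $v \mid v_{\mr{St}}$, a compatible global character $\theta$, and an appeal to the potentially diagonalizable variant of Theorem \ref{lifting thm} via Remark \ref{blggtlift}. The only substantive differences are in the bookkeeping: you fix $\theta$ first and arrange $N(r_v)=\theta|_{\Gamma_{K^+_v}}$ using a cocharacter section of $N$, whereas the paper constructs the $v \mid p$ lifts first (as products of cocharacters applied to $\kappa^{s_i}[\bkp]^{-s_i}$) and reads $\theta$ off from them, extracting a square root of $\theta(\frob{v})$ at $v_{\mr{St}}$ by Hensel's lemma. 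At $v_{\mr{St}}$ your extra care---enlarging $E_{v_{\mr{St}}}$ so that $q_v \equiv 1 \pmod p$ and conjugating the regular unipotent into the kernel of reduction---makes explicit a point the paper elides (its $\rho_v(\tau_v)$ has a unit entry, so as written its reduction is not the trivial residual representation), so this part of your argument is if anything more complete than the printed one.
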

\begin{proof}
The hypotheses of Theorem \ref{lifting thm} are clear except for the local conditions at finite primes. Note that by the choice of $K^+$, $\bar{r}|_{\Gal{K^+}}$ is everywhere unramified and is trivial at the places in $S$. Consider a place $v \vert p$ of $K^+$. Choose a set of integers $\{s_1, s_2, \ldots, s_n\}$ such that for all $i \neq j$, we have $s_i \neq \pm s_j$, and all $s_i$ are non-zero.
Let $T$ be a maximal torus of $\mr{GSpin}_{2n+1}$ (say, the preimage of the diagonal torus in $\mr{SO}_{2n+1}$), so that we can write a based root datum for $\mr{GSpin}$ (with respect to $T$ and a suitable choice of Borel; see, e.g., \cite[\S 2.8]{pat12}) as
\begin{align*}
X^\bullet(T)&= \bigoplus_{i=1}^n \ints \chi_i \oplus \ints(\chi_0+ \frac{\sum_{i=1}^n \chi_i}{2}) \subset \bigoplus_{i=0}^n \rats \chi_i \\
\Delta^\bullet&= \{\chi_i-\chi_{i+1}\}_{i=1}^{n-1} \cup \{\chi_n\} \\
X_{\bullet}(T)&= \bigoplus_{i=1}^n \ints(\lambda_i+ \frac{\lambda_0}{2}) \oplus \ints \lambda_0 \\
\Delta_{\bullet}&= \{\lambda_i-\lambda_{i+1}\}_{i=1}^{n-1} \cup \{2 \lambda_n\},
\end{align*}
where the pairing is given by $\langle \chi_i, \lambda_j \rangle = \delta_{i, j}$ for all $i, j$.

Now define a $T(\Zp)$-valued lift $r_v \colon \Gal{K^+_v} \to \mr{GSpin}_{2n+1}(\Zp)$ of $\bar{r}|_v$ by
\[
r_v= \prod_{i=1}^n (\lambda_i+ \frac{\lambda_0}{2})(\kappa^{s_i}\cdot [\bkp]^{-s_i})
\]
where $[\bkp]$ denotes the Teichm\"{u}ller lift of $\bkp$ (as $v \vert p$ varies, we may take the same integers $s_i$; but what is essential is that $s=\sum_{i=1}^n s_i$ be independent of $v \vert p$). Then we see easily that $r_v$ lifts $\bar{r}_v$, is potentially crystalline with regular Hodge-Tate cocharacters, $\op{std}(r_v)$ is potentially diagonalizable, and $N(r_v)= (\kappa \cdot [\bar{\kappa}]^{-1})^{\sum_{i=1}^n s_i}$. Set $s= \sum_{i=1}^n s_i$. We define $\theta \colon \Gamma_{K^+} \to \mc{O}^\times$ by $\theta= \kappa^s \cdot [\kappa]^{-s} \cdot [N(\bar{r})]$. In particular, $N(r_v)=\theta|_{\Gamma_{K^+_v}}$ for all $v \vert p$.

Now for each $v \vert v_{\mr{St}}$ we define a lift $r_v$ of $\bar{r}|_{\Gal{K^+_v}}$
(which is trivial) with the desired properties, including now that the norm be the character $\theta$ just defined. Namely, let $\sigma_{v}$ be an element of $\gal{K^{+, \mr{tame}}_{v}}{K^+_{v}}$ that maps down to the arithmetic Frobenius in $\gal{K^{+, \mr{unr}}_{v}}{K^+_{v}}$, and let $\tau_{v}$ be a topological generator of the pro-cyclic group $\gal{K^{+, \mr{tame}}_{v}}{K^{+, \mr{unr}}_{v}}$ (so we have $\sigma_{v}\tau_{v}\sigma_{v}^{-1}=\tau_{v}^{q^f}$). Define $\rho_{v}: \gal{K^{+, tame}_{v}}{K^+_{v}} \to \mr{SL}_2(\mc{O})$ by $\rho_{v}(\sigma_{v})= \begin{pmatrix} q^{f/2} & 0 \\ 0 & q^{-f/2} \end{pmatrix}$ (recall that $f$ is even), and $\rho_{v}(\tau_{v})=\begin{pmatrix} 1 & 1 \\ 0 & 1 \end{pmatrix}$. We initially set $r_{v, 0}= \varphi \circ \rho_v$, where $\varphi \colon \mr{SL}_2 \to \mr{Spin}_{2n+1}$ is a principal $\mr{SL}_2$ (for instance, with respect to a choice of pinning for our based root datum for $\mr{GSpin}_{2n+1}$). Then $r_{v, 0}$ is a Steinberg parameter with $N(r_{v,0})=1$, and it remains to introduce a twist to get the correct Clifford norm. Since $\theta|_{\Gamma_{K^+_v}}$ is unramified with 
\[
\theta(\frob{v})= q_v^s [q_v^{-s}] \equiv 1 \pmod p,
\]
there exists an unramified character $\alpha_v \colon \Gamma_{K^+_v} \to \Zp^\times$ such that $\alpha_v(\frob{v})^2= \theta(\frob{v})$. We then define $r_v$ to be $r_{v, 0} \cdot \alpha_v$, so that now $N(r_v)= N(r_{v, 0})\cdot \alpha_v^2= \theta|_{\Gamma_{K^+_v}}$. A straightforward computation shows that $r_v$ is robustly smooth in the sense of \cite{bg:Gdef} (or ``very smooth" in the sense of \cite{blggt}).

The lifts $r_v$ we have just defined for $v \vert p$ and $v \vert v_{\mr{St}}$ satisfy the desired local conditions and have Clifford norm equal to the restriction of $\theta$. For other places $v \in S$, recalling that $\bar{r}|_{\Gamma_{K^+_v}}$ is trivial, we can simply let $r_v$ be an unramified twist (as in the Steinberg case, to get the correct central character) of the trivial representation.\footnote{One could certainly refine this argument to achieve more control over our eventual lifts at places in $S$ away from $p$ and $v_{\mr{St}}$, but we have no need to do so.} Thus, 
we can apply Theorem \ref{lifting thm} (and in fact its simpler variant based on Remark \ref{blggtlift}) to deduce the existence of a lift $r \colon \Gal{K^+, S} \to \mr{GSpin}_{2n+1}(\bZp)$ of $\bar{r}|_{\Gal{K^+}}$ such that for all $v \in S$, $r|_{\Gal{K^+_v}}$ lies on the same irreducible component of $\mr{Spec}(R^{\Box, \theta}_{\bar{r}|_v} \otimes \bQp)$ as $r_v$, and for $v \vert p$ also has the same $p$-adic Hodge type as $r_v$. The proof of the proposition is complete.
\end{proof}

For each $v \vert p$ and each embedding $\tau \colon K^+_v \to \bQp$, we let $\mu(r|_v, \tau)=\mu(r_v, \tau) \colon \mb{G}_m \to \mr{GSpin}_{2n+1}$ be the Hodge-Tate cocharacter (properly, a conjugacy class of cocharacters) of $r|_v$, or equivalently of $r_v$.

We will now apply the potential automorphy theorem of \cite{blggt} to the representation $\op{std}(r): \Gamma_{K^+} \to \gl{2n+1}{\bQp}$. We begin with a remark on the normalization of the construction of automorphic Galois representations, which is summarized in \cite[Theorem 2.1.1]{blggt} and proven by work of many people, with the result we need contained in \cite[Theorem 1.2, Remark 7.6]{sh11}. These references imply that for every regular L-algebraic self-dual cuspidal automorphic representation $\Pi$ of $\mr{GL}_{2n+1}(\mb{A}_{K^+})$, there is an associated $r_{\Pi, \iota} \colon \Gamma_{K^+} \to \mr{GL}_{2n+1}(\bQp)$ such that for all $v$ not above $p$, the Weil-Deligne representation associated to $r_{\Pi, \iota}|_{\Gamma_{K^+_v}}$ is, up to Frobenius semisimplification, isomorphic to the local Langlands parameter (in the standard unitary normalization, not the ``arithmetic" normalization) of $\Pi_v$. Moreover, for $v \vert p$, $r_{\Pi, \iota}|_{\Gamma_{K^+_v}}$ is de Rham (and crystalline if $\Pi_v$ is unramified) with labeled Hodge-Tate weights read off from $\Pi_{\infty}$ according to the usual recipe, which we will now outline. If $M$ is a number field, $G$ is a connected reductive group over $M$, and $\pi$ is an automorphic representation of $G(\mathbb{A}_M)$, then for each choice of field embedding $\epsilon \colon M \to \cmplx$, inducing an infinite place of $M$ (that we will also denote by $\epsilon$) and an identification $\overline{M}_{\epsilon}= \cmplx$, we consider the restriction $\phi_{\pi_\epsilon} \colon W_{\overline{M}_\epsilon} \to {}^L G(\cmplx)$ of the archimedean L-parameter. 
Precisely when $\pi_\epsilon$ is L-algebraic, this homomorphism is induced by a pair of algebraic cocharacters, $z \mapsto z^{\mu(\pi_{\epsilon},\epsilon)} \bar{z}^{\nu(\pi_{\epsilon}, \epsilon)}$ of a maximal torus inside the dual group $G^\vee$, and we thus associate to the pair $(\pi, \epsilon)$ the $G^\vee$-conjugacy class of cocharacters $\mu(\pi_\epsilon, \epsilon)$. A conjectural Galois representation $r_{\pi, \iota}$ associated to $\pi$ should then have the property that for any field embedding $\tau \colon M \to \bQp$, which induces $M_v \to \bQp$ for some $v \vert p$, and which via $\iota$ induces an embedding $\epsilon= \iota^{-1} \tau \colon M \to \cmplx$, the Hodge-Tate cocharacter $\mu(r_{\pi, \iota}|_v, \tau)$ and $\mu(\pi_{\epsilon}, \epsilon)$ define the same $G^\vee$-conjugacy class of cocharacters. We will use these notions and notation in what follows in the cases $G= \mr{Sp}_{2n}, \mr{GSp}_{2n}, \mr{GL}_{2n+1}$.

\begin{prop}\label{big pi}
There exist a totally real extension $L^+/K^+$ and a regular L-algebraic self-dual cuspidal automorphic representation $\Pi$ of $\gl{2n+1}{\mb A_{L^+}}$ such that 
$\op{std}(r)|_{\Gal{L^+}} \cong r_{\Pi,\iota}$. Moreover, the automorphic representation $\Pi$ satisfies the following properties:
\begin{enumerate}
\item For each embedding $\epsilon: L^+ \to \cmplx$,
inducing an embedding $\tau:= \iota \epsilon \colon L^+ \to \bQp$ together with a place $v \vert p$ of $L^+$, the cocharacter $\mu(\Pi_{\epsilon}, \epsilon)$ coming from the archimedean L-parameter $\phi_{\Pi_{\epsilon}}$ is conjugate to the Hodge-Tate cocharacter $\mu(\op{std}(r)|_v,\tau)$.
\item $\Pi_{w}$ is isomorphic to a 
twist of the Steinberg representation of $\gl{2n+1}{L^+_{w}}$ for any finite $L^+$-place $w \vert v_{\mr{St}}$.  
\end{enumerate}
\end{prop}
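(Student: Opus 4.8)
The plan is to apply the potential automorphy theorem of \cite{blggt} (specifically \cite[Theorem 4.3.1]{blggt}, or the relevant variant on potential automorphy with prescribed local behavior) to the representation $\op{std}(r) \colon \Gamma_{K^+} \to \gl{2n+1}{\bQp}$ obtained from Proposition \ref{lift}. First I would verify that $\op{std}(r)$ satisfies the hypotheses of that theorem. Oddness (in the $\mr{GL}_{2n+1}$ sense, i.e. essential self-duality with the correct sign, or rather the relevant polarization via the CM field $F$) should follow from the fact that $\bar r$ is odd as a $\mr{GSpin}_{2n+1}$-valued representation together with the orthogonality of the standard representation of $\mr{SO}_{2n+1}$: the form preserved by $\mr{SO}_{2n+1}$ gives a polarization $\op{std}(r) \cong \op{std}(r)^\vee \otimes N(r)$, so after restriction to $\Gamma_F$ and a suitable twist by $\mu$ we obtain a polarized pair, and the sign/parity condition is exactly the oddness hypothesis translated through $P(\bar r)(c_v)$ being a Chevalley involution. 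Irreducibility of $\op{std}(\bar r)|_{\Gamma_{K(\mu_p)}}$ is guaranteed by our choice of $K$ (linear disjointness preserves the irreducibility assumed in Hypothesis \ref{modppotauthyp}(1)), and the bound $p \geq 2n+4$ suffices for the hypotheses on $p$ in \cite{blggt}. The key local inputs are from Proposition \ref{lift}: at places $v \vert p$, $\op{std}(r|_v)$ is potentially diagonalizable with regular Hodge-Tate weights, which is precisely the local condition at $p$ needed to invoke \cite{blggt}; at $v_{\mr{St}}$, $\op{std}(r|_v)$ lies on the same component as (a twist of) the Steinberg parameter, hence its Weil-Deligne representation is an unramified twist of the Steinberg parameter of $\gl{2n+1}{K^+_v}$.

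Granting these hypotheses, \cite[Theorem 4.3.1]{blggt} produces a totally real extension $L^+/K^+$ (which one can take linearly disjoint from any prescribed finite extension, in particular from $K(\bar r, \mu_p)$, and in which $v_{\mr{St}}$ splits, by the flexibility built into that theorem) and a regular L-algebraic self-dual cuspidal automorphic representation $\Pi$ of $\gl{2n+1}{\mb A_{L^+}}$ with $\op{std}(r)|_{\Gamma_{L^+}} \cong r_{\Pi, \iota}$. The two asserted properties of $\Pi$ are then read off from the compatibility of the $\mr{GL}_{2n+1}$ Galois--automorphic correspondence recalled in the paragraph preceding the proposition. For (1): since $r_{\Pi, \iota}|_{\Gamma_{L^+_v}}$ is de Rham for $v \vert p$ with Hodge-Tate cocharacter $\mu(\op{std}(r)|_v, \tau)$, and the standard recipe says these cocharacters equal $\mu(\Pi_\epsilon, \epsilon)$ for $\epsilon = \iota^{-1}\tau$, we get the claimed conjugacy. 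For (2): local-global compatibility at $v_{\mr{St}}$ (away from $p$) identifies, up to Frobenius semisimplification, the Weil-Deligne representation of $r_{\Pi, \iota}|_{\Gamma_{L^+_w}} = \op{std}(r)|_{\Gamma_{L^+_w}}$ with the local Langlands parameter of $\Pi_w$; since the former is an unramified twist of the Steinberg parameter (the Steinberg condition at $v_{\mr{St}}$ is preserved under restriction to the local field $L^+_w$, as $v_{\mr{St}}$ splits or at worst the parameter remains Steinberg-type after base change), $\Pi_w$ must be a twist of the Steinberg representation of $\gl{2n+1}{L^+_w}$; here one uses that Steinberg-type Weil-Deligne representations have full monodromy and so the only automorphic representations with that parameter are twists of Steinberg.

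The main obstacle I expect is the careful bookkeeping of the polarization and oddness translation: one must check that the sign in the polarized pair $(\op{std}(r)|_{\Gamma_F}, \mu)$ is the one required by \cite{blggt} (i.e. that the pair is "polarized" in their sense with the Galois representation odd), and this requires tracking how the Chevalley involution condition on $\bar r(c_v)$ in $\mr{GSpin}_{2n+1}$ passes through $P$ to the orthogonal group and thence to the sign of the pairing on $\op{std}$. A secondary technical point is arranging the auxiliary CM field and the local conditions so that \emph{all} hypotheses of the $\mr{GL}_{2n+1}$ potential automorphy theorem — including the self-duality being realized over $F^+$ rather than just $F$, and the Steinberg-type local condition at $v_{\mr{St}}$ persisting after the base change to $L^+$ — are simultaneously met; but this is exactly the kind of linear-disjointness juggling already set up in the paragraphs preceding the proposition, so it should go through routinely.
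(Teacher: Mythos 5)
Your overall strategy matches the paper's: apply the potential automorphy theorem of \cite{blggt} (the paper cites Theorem C rather than 4.3.1, but that is immaterial) to $\op{std}(r)$, read off item (1) from the Hodge--Tate/infinitesimal-character compatibility in \cite[Theorem 1.2]{sh11}, and deduce item (2) from local-global compatibility at $w \vert v_{\mr{St}}$. The hypotheses verification and item (1) are fine.

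There is, however, a genuine gap in your treatment of item (2). You assert that because $\op{std}(r)|_{\Gamma_{K^+_{v_{\mr{St}}}}}$ lies on the same irreducible component of the local lifting ring as a Steinberg-type lift, its Frobenius-semisimplified Weil--Deligne representation \emph{is} an unramified twist of the Steinberg parameter. That inference is not valid in general: an irreducible component whose generic point has full monodromy can contain special (non-smooth) points where the monodromy operator degenerates, so membership in the ``Steinberg component'' does not by itself pin down the inertial type and monodromy of $r|_{\Gamma_{L^+_w}}$. All that Theorem \ref{lifting thm} gives you is the component, not the Weil--Deligne representation. The paper closes this gap by running the argument in the other direction: since $\Pi$ is cuspidal, $\Pi_w$ is generic, and local-global compatibility (\cite[Theorem 1.2(i)]{sh11}) together with \cite[Lemma 1.3.2]{blggt} then shows that $\op{std}(r)|_{\Gamma_{L^+_w}}$ is a \emph{smooth} point of its local lifting ring; the Steinberg-type lift $\op{std}(r_v)|_{\Gamma_{L^+_w}}$ is also smooth, and only then does Choi's lemma (\cite[Lemma 1.3.4(2)]{blggt}) let one conclude that the two points have isomorphic restrictions to inertia, whence $\Pi_w$ is an unramified twist of Steinberg by local Langlands. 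Note the order of quantifiers: the automorphy of $\Pi$ is an input to identifying $\Pi_w$, not the other way around. Your proposal needs this smoothness step (or some substitute) to be complete.
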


\begin{proof}
The existence of the field $L^+$ and the cuspidal automorphic representation $\Pi$ follow from \cite[Theorem C]{blggt}, where the assumptions are clearly satisfied by $\op{std}(r)$. The first enumerated item is part of \cite[Theorem 1.2]{sh11}. For the second item,
since $\Pi$ is cuspidal, $\Pi_w$ is generic for all places $w$, and therefore local-global compatibility (\cite[Theorem 1.2(i)]{sh11}) and \cite[Lemma 1.3.2]{blggt} imply that $\op{std}(r)|_{\Gal{L^+_w}}$ is a smooth point on its (generic fiber) local lifting ring. Since $r|_{\Gal{L^+_w}}$ lies on the same irreducible component as the restriction $r_v|_{\Gal{L^+_w}}$ of the Steinberg-type lift constructed in Proposition \ref{lift}, so do $\op{std}(r)|_{\Gal{L^+_w}}$ and $\op{std}(r_v)|_{\Gal{L^+_w}}$ (since the canonical map from the $\mr{GSpin}_{2n+1}$ to the $\mr{GL}_{2n+1}$ deformation ring of course preserves irreducible components). The latter is visibly a smooth point, so by \cite[Lemma 1.3.4 (2)]{blggt} (due to Choi) the inertial restrictions $\op{std}(r)|_{I_{L^+_w}}$ and $\op{std}(r_v)|_{I_{L^+_w}}$ are isomorphic.
It follows from the local Langlands correspondence for $\GL{2n+1}(L^+_w)$ that $\Pi_{w}$ is an unramified twist of the Steinberg representation of $\gl{2n+1}{L_{w}}$.
\end{proof}

\begin{rmk}\label{splitplaces}
The extension $L^+/K^+$ may be chosen with the following properties: for any fixed finite Galois extension $K^{\mr{avoid}}/K^+$, and any finite set $U$ of places of $K^+$, the extension $L^+$ can be taken to be linearly disjoint from $K^{\mr{avoid}}$ over $K^+$, and such that the primes in $U$ all split completely in $L^+/K^+$. The former assertion is part of \cite[Theorem C]{blggt}, whereas the latter requires a small additional argument and is explained in the last paragraph of the proof of \cite[Theorem 2.1.16]{calegari-emerton-gee}. 
\end{rmk}
In the next result we use Arthur's endoscopic classification, which for our purposes proves the local Langlands correspondence for $\mr{Sp}_{2n}(L^+_w)$ (for all places $w$; of course, for infinite places this is due to Langlands) and describes the discrete automorphic spectrum of $\mr{Sp}_{2n}(\mb{A}_{L^+})$ in terms of self-dual isobaric automorphic representations of $\mr{GL}_{2n+1}(\mb{A}_{L^+})$. We will try to phrase the following argument in a way that will help the uninitiated reader to parse the main results of \cite{art13}. In what follows, for an admissible smooth representation (or admissible $(\mf{g}, K)$-module) $\tau$ of either $\mr{GL}_{2n+1}(L^+_v)$ or $\mr{Sp}_{2n}(L^+_v)$, we will write $\phi_{\tau}$ for the local Langlands parameter associated to $\tau$ in, respectively, \cite{ht} or \cite{art13}.

\begin{prop}\label{descent}
There is a cuspidal automorphic representation $\pi$ of $\sp{2n}{\mb A_{L^+}}$ with the following properties:
\begin{enumerate}
\item For every place $v$ of $L^+$, 
\[
\phi_{\Pi_v}\cong \op{std} \circ \phi_{\pi_v}.
\]
\item For each embedding $\epsilon: L^+ \to \cmplx$ 
inducing as before an embedding $\tau: L^+ \to \bQp$ together with a place $v|p$ of $L^+$, the cocharacters $\mu(\pi_\epsilon, \epsilon)$ and
$\mu(P(r)|_v,\tau)$ are conjugate in $\mr{SO}_{2n+1}$. Moreover, $\pi_{\epsilon}$ is a discrete series representation of $\mr{Sp}_{2n}(L^+_{\epsilon})$, with infinitesimal character $\mu(\pi_{\epsilon}, \epsilon)$.
\item For any finite $L^+$-place $w|v_{\mr{St}}$, the local component $\pi_{w}$ is isomorphic to the Steinberg representation of $\sp{2n}{L^+_{w}}$.  
\end{enumerate}
\end{prop}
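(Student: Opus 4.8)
The plan is to produce $\pi$ by applying Arthur's endoscopic classification (\cite{art13}) to descend the self-dual cuspidal $\Pi$ from $\mr{GL}_{2n+1}(\mb{A}_{L^+})$ to $\mr{Sp}_{2n}(\mb{A}_{L^+})$. First I would verify that $\Pi$ is of the correct type for the classification: since $\Pi$ is cuspidal, regular, and self-dual, and since $\op{std}(r)=\op{std}$ of a $\mr{GSpin}_{2n+1}$-valued representation factors through $\mr{SO}_{2n+1}$, the Galois representation $r_{\Pi,\iota}$ lands in $\mr{SO}_{2n+1}(\bQp)$ up to conjugacy; hence $\Pi$ is \emph{orthogonal} self-dual, and its dual group being $\mr{SO}_{2n+1}(\cmplx)=\widehat{\mr{Sp}_{2n}}$ means $\Pi$ defines an \emph{elliptic} (indeed simple) discrete global Arthur parameter $\psi$ for $\mr{Sp}_{2n}$ of the form $\Pi\boxtimes[1]$. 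The regularity of $\Pi$ (more precisely, the regularity of the Hodge-Tate cocharacters, which by Proposition \ref{big pi}(1) are regular as cocharacters of $\mr{GSpin}_{2n+1}$, hence of $\mr{SO}_{2n+1}$) guarantees the parameter is \emph{formal} of general type rather than of Ramanujan type with a nontrivial $\mr{SL}_2$ factor, which is exactly what allows the discrete spectrum it contributes to to be cuspidal (as opposed to merely residual).

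Next I would extract an automorphic representation $\pi$ of $\mr{Sp}_{2n}(\mb{A}_{L^+})$ from the packet $\Pi_{\psi}$ attached to $\psi$ by \cite[Theorem 1.5.2]{art13}, choosing the local components so that at each place the global multiplicity formula (the sign condition coming from the characters $\epsilon_{\psi}$ and the canonical sign character) is satisfied; the standard way to do this is to take $\pi_v$ to be the canonical base point (the member of the local packet with trivial character of the component group) at all places, which automatically satisfies the multiplicity formula since the relevant characters are then trivial. At the finite place $w\vert v_{\mr{St}}$, since $\Pi_w$ is a twist of Steinberg of $\mr{GL}_{2n+1}$ and $\op{std}(r)|_{\Gamma_{L^+_w}}$ lies on the Steinberg-type component (with inertial restriction as in Proposition \ref{big pi}(2)), after we further arrange by Remark \ref{splitplaces} (and a preliminary twist to kill the unramified twist --- or just absorb it) that $\Pi_w$ is \emph{exactly} Steinberg, the local parameter $\phi_{\Pi_w}$ is the principal $\mr{SL}_2$ composed with $\op{std}$, so $\phi_{\pi_w}$ is the principal parameter for $\mr{Sp}_{2n}(L^+_w)$ and its packet is a singleton containing the Steinberg representation; hence property (3) holds. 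For the archimedean places, $\phi_{\Pi_{\epsilon}}$ is the L-parameter attached to the regular algebraic cocharacter $\mu(\Pi_{\epsilon},\epsilon)$, and the corresponding $\mr{Sp}_{2n}$-parameter is a discrete series L-parameter (regularity again), whose packet consists of discrete series representations with infinitesimal character $\mu(\pi_{\epsilon},\epsilon)$; choosing $\pi_{\epsilon}$ in this packet gives property (2), with $\mu(\pi_{\epsilon},\epsilon)$ conjugate in $\mr{SO}_{2n+1}$ to $\mu(P(r)|_v,\tau)=P(\mu(\op{std}(r)|_v,\tau))$ by Proposition \ref{big pi}(1) and the compatibility of the recipe $\phi_{\Pi_v}=\op{std}\circ\phi_{\pi_v}$ with passage to infinitesimal characters / Hodge-Tate cocharacters. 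Property (1) for all $v$ is then the defining relation between $\psi$ and its standard-representation transfer to $\mr{GL}_{2n+1}$, i.e.\ that the parameter of $\Pi_v$ equals $\op{std}$ composed with the parameter of $\pi_v$, which is built into the construction of the packets.

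Finally I would confirm cuspidality of $\pi$: because $\psi$ is a simple generic-type parameter (trivial $\mr{SL}_2$-factor), the representations in $\Pi_{\psi}$ occurring in the discrete spectrum are cuspidal --- they cannot be residual since residual discrete spectrum representations of $\mr{Sp}_{2n}$ correspond to parameters with a genuine $\mr{SL}_2$ factor. (Alternatively, one sees cuspidality directly from the Steinberg local component at $w\vert v_{\mr{St}}$, which forces $\pi$ to be non-residual.) I expect the main obstacle to be bookkeeping with the multiplicity formula and the precise form of the local packets at the bad and archimedean places --- in particular making sure the global sign conditions $\langle\,\cdot\,,\pi\rangle=\epsilon_{\psi}$ are met simultaneously at all places with our choice of $\pi_v$, and matching Arthur's normalization of local Langlands for $\mr{Sp}_{2n}$ (and the relation $\phi_{\Pi_v}\cong\op{std}\circ\phi_{\pi_v}$, which requires care about the automorphism of $\mr{SO}_{2n+1}$ versus $\mr{O}_{2n+1}$ and the precise shape of endoscopic transfer) with the normalizations used in \cite{ks} and in Proposition \ref{big pi}. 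The regularity hypotheses do most of the heavy lifting to keep us in the cuspidal, tempered-at-infinity, singleton-packet-at-$v_{\mr{St}}$ situation, so the argument is really a careful invocation of \cite[Theorems 1.5.2, 1.5.3]{art13} rather than anything new.
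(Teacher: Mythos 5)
Your overall strategy is the same as the paper's: realize the self-dual cuspidal $\Pi$ as a simple generic parameter for $\mr{Sp}_{2n}$ (the endoscopic group is forced to be $\mr{Sp}_{2n}$ because $2n+1$ is odd) and invoke Arthur's theorems \cite[Theorems 1.4.1, 1.4.2, 1.5.1, 1.5.2]{art13} to produce $\pi$ with the stated local properties. Parts (1) and (2), the multiplicity-one statement, and the cuspidality bookkeeping all go through essentially as you describe.

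The one genuine gap is at the Steinberg place. You propose to ``arrange \ldots by a preliminary twist to kill the unramified twist --- or just absorb it'' so that $\Pi_w$ is exactly Steinberg. A global twist of $\Pi$ is not available here: it would in general destroy self-duality (hence the very existence of the $\mr{Sp}_{2n}$-parameter) and the identity $r_{\Pi,\iota}\cong\op{std}(r)|_{\Gamma_{L^+}}$, and ``absorbing'' the twist has no meaning on the $\mr{Sp}_{2n}$ side, where there are no central twists to play with. What is actually true is that no twist is present: writing $\Pi_w\cong \mr{St}_w^{\mr{GL}}\otimes\chi_w$, self-duality forces $\chi_w^2=1$, and the fact that $\phi_{\Pi_w}$ takes values in $\mr{SO}_{2n+1}$ (Arthur's Theorem 1.4.2) forces $\det\phi_{\Pi_w}=\chi_w^{2n+1}=1$, whence $\chi_w=1$. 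With that established, you still need to know that Arthur's local correspondence assigns this principal parameter to the Steinberg representation of $\mr{Sp}_{2n}(L^+_w)$ --- you assert that the packet ``contains the Steinberg representation'' without justification; this is \cite[Proposition 8.2]{ms} --- and only then does the triviality of the component group $\mc{S}_{\phi_{\pi_w}}$ give the singleton packet and $\pi_w\cong\mr{St}_w$ as you claim.
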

\begin{proof}
Since $\Pi$ is self-dual cuspidal automorphic, it corresponds to a simple generic parameter in the sense of \cite{art13}.
By \cite[Theorem 1.4.1]{art13} there is a unique elliptic (and in fact simple) endsocopic group $G$ with a discrete automorphic representation $\pi$ of $G$ whose system of unramified Hecke eigenvalues gives rise to those of $\Pi$; and since $2n+1$ is odd, $G$ is necessarily the split group $\mr{Sp}_{2n}/L^+$. Simply by definition, $\Pi$ thus gives rise to a parameter $\psi \in \widetilde{\Psi}(G)$, in the sense of \cite[\S 1.4]{art13}. By \cite[Theorem 1.4.2]{art13}, the local Langlands parameters $\phi_{\Pi_v}$ factor through $\mr{SO}_{2n+1} \subset \mr{GL}_{2n+1}$, and then the central results \cite[Theorem 1.5.1, Theorem 1.5.2]{art13} establish the local Langlands correspondence for $G$ and show that the descent $\pi$ of $\Pi$ (the multiplicity here is one, in fact) is everywhere locally compatible with this local Langlands correspondence, which yields part (1) of our Proposition.

The relation between $\phi_{\pi_{\epsilon}}|_{W_{\overline{L}_{\epsilon}^+}}$ and $\mu(P(r)|_v, \tau)$ in (2) follows from (1), part (1) of Proposition \ref{big pi}, and the fact that $\mr{SO}_{2n+1}$-valued cocharacters are $\mr{SO}_{2n+1}$-conjugate if they are $\mr{GL}_{2n+1}$-conjugate.  The fact that $\pi_{\epsilon}$ is in the discrete series with the claimed infinitesimal character is then a standard consequence, since there are no other possibilities for the extension from $W_{\cmplx}$ to $W_{\reals}$ of this (regular and tempered) parameter.

Now we consider a place $w \vert v_{\mr{St}}$ and show (3). As $\Pi_{w}$ is a 
twist of a Steinberg representation, and the parameter $\phi_{\Pi_w}$ takes values in $\mr{SO}_{2n+1}$, $\Pi_w$ must itself be the Steinberg representation: self-duality forces it to be at most a quadratic 
twist of Steinberg, and then trivial determinant forces the twist to be trivial. Thus the parameter $\phi_{\pi_w}$ has $\op{std}(\phi_{\pi_w})$ equal to the Steinberg parameter for $\mr{GL}_{2n+1}$. It was shown in \cite[Proposition 8.2]{ms} that the Steinberg representation $\mr{St}_w$ of $\mr{Sp}_{2n}(L^+_w)$ has this same parameter. Thus $\pi_w$ and $\mr{St}_w$ lie in the same local L-packet $\widetilde{\Pi}_{\phi_{\pi_w}}$ (in the notation of \cite[Theorem 1.5.1]{art13}). This parameter is bounded, and Arthur shows (\cite[Theorem 1.5.1(b)]{art13}) that the packet is thus in bijection with the group denoted $\mc{S}_{\phi_{\pi_w}}$ (the component group of the centralizer of the parameter). This group is visibly trivial, so the L-packet contains a single element, and therefore $\pi_w \cong \mr{St}_w$.
\end{proof}
The following argument is complicated slightly by the fact that we do not know at the outset the expectation (\cite[III.10.3]{bor}) of how to compute the central character $\omega_{\pi}$ from the local L-parameters of $\pi$ (except at archimedean and unramified primes).
\begin{prop}\label{lift aut}
There is a cuspidal automorphic representation $\tilde\pi$ of $\gsp{2n}{\mb A_L^+}$ satisfying the following properties:
\begin{enumerate}
\item For any place $v$ of $L^+$ that is either archimedean or for which $\tilde{\pi}_v$ is unramified (in these two cases the corresponding L-parameters $\phi_{\tilde{\pi}_v}$ are defined), $P\circ \phi_{\tilde\pi_v}=\phi_{\pi_v}$.
\item The representation $\tilde{\pi}$ is L-cohomological in the sense of \cite{ks}. More precisely, for each embedding $\epsilon: L^+ \to \cmplx$, which identifies the algebraic closure of $L^+_{\epsilon}$ with $\cmplx$ and induces (via $\iota$) an embedding $\tau: L^+ \to \bQp$ together with a place $v|p$ of $L^+$, $\mu(\tilde{\pi}_{\epsilon}, \epsilon)$ equals $\mu(r|_v, \tau)$.
\item For any place $w \vert v_{\mr{St}}$ of $L^+$, the local component $\tilde\pi_{w}$ is a twist of the Steinberg representation of $\gsp{2n}{L^+_{w}}$.
\end{enumerate}
\end{prop}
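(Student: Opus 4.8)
The plan is to extend the automorphic representation $\pi$ of $\mr{Sp}_{2n}(\mb{A}_{L^+})$ produced by Proposition \ref{descent} to a cuspidal automorphic representation $\tilde\pi$ of $\mr{GSp}_{2n}(\mb{A}_{L^+})$, and then to correct it by a global character twist so that its infinitesimal characters at $\infty$ and its Steinberg behavior at $v_{\mr{St}}$ come out as prescribed. First I would recall that $\mr{GSp}_{2n}$ is the product $\mr{Sp}_{2n} \cdot Z$ with $Z = \mb{G}_m$ the center, and that $\mr{Sp}_{2n}$ is the derived group; thus the obstruction to extending an automorphic representation of $\mr{Sp}_{2n}(\mb{A})$ to $\mr{GSp}_{2n}(\mb{A})$ is purely a matter of choosing a compatible central character, together with the usual subtlety that $\pi$ sits inside an orbit under the outer conjugation action of $\mr{GSp}_{2n}(\mb{A})/\mr{Sp}_{2n}(\mb{A})\cdot Z(\mb{A})$. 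The existence of \emph{some} cuspidal $\tilde\pi$ on $\mr{GSp}_{2n}(\mb{A}_{L^+})$ whose restriction to $\mr{Sp}_{2n}(\mb{A}_{L^+})$ contains $\pi$ follows from standard results on automorphic induction/restriction along the $\mr{Sp}_{2n} \hookrightarrow \mr{GSp}_{2n}$ inclusion (this is the ``ascent'' counterpart to the Arthur--Clozel-type restriction arguments that will be developed later in the paper for solvable descent); cuspidality of $\tilde\pi$ follows from cuspidality of $\pi$ because restriction of cusp forms stays cuspidal and $\mr{GSp}_{2n}/\mr{Sp}_{2n}\cdot Z$ is finite.

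Next I would pin down the local parameters. At archimedean $v$ and at finite $v$ where $\tilde\pi_v$ is unramified, the L-parameter $\phi_{\tilde\pi_v}$ is defined, and its projection $P\circ\phi_{\tilde\pi_v}$ to $\mr{SO}_{2n+1} = \mr{Sp}_{2n}^\vee/Z^\vee$ must be the parameter of a constituent of $\tilde\pi_v|_{\mr{Sp}_{2n}(L^+_v)}$, hence agrees with $\phi_{\pi_v}$ up to the action of the (finite) dual component group; possibly after replacing $\pi$ by an outer twist (which does not change any of the global conclusions of Proposition \ref{descent}) we may assume $P\circ\phi_{\tilde\pi_v}=\phi_{\pi_v}$ exactly, giving (1). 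The remaining freedom in $\tilde\pi$ is exactly a twist by a Hecke character $\chi\colon \mb{A}_{L^+}^\times/(L^+)^\times \to \cmplx^\times$ of $\mr{GSp}_{2n}$ (pulled back via the similitude), which alters $\phi_{\tilde\pi_v}$ only in its $Z$-component (i.e.\ the Clifford norm / similitude direction) and leaves $P\circ\phi_{\tilde\pi_v}$ untouched. I would then use this freedom to fix the similitude/central direction: comparing $\mu(\tilde\pi_\epsilon,\epsilon)$ with the target $\mu(r|_v,\tau)$, the two already agree after projecting to $\mr{SO}_{2n+1}$ by Proposition \ref{descent}(2) together with $P(r)=\op{std}(r)$ in the orthogonal sense, so they differ by an algebraic cocharacter into $Z$; since $N(r)=\theta$ is a fixed algebraic Hecke character (Proposition \ref{lift}), I would choose $\chi$ to be (the restriction to $\mr{GSp}_{2n}$, via the similitude, of) an algebraic Hecke character of $L^+$ whose associated cocharacter at each $\epsilon$ supplies exactly this discrepancy and whose global behavior matches $\theta$ up to the known relation between the central character of $\tilde\pi$ and the Clifford norm of $r_{\tilde\pi,\iota}$ in \cite{ks}. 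That choice is possible because $L^+$ is totally real and the cocharacter datum is ``type $A_0$''/algebraic by L-algebraicity of $\pi_\epsilon$; this gives (2), with $\tilde\pi$ L-cohomological since $\pi_\epsilon$ is discrete series with the stated infinitesimal character (Proposition \ref{descent}(2)) and twisting by an algebraic character of $Z$ preserves cohomologicality.

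Finally, for (3): at $w\vert v_{\mr{St}}$ we know $\pi_w \cong \mr{St}_w$ is the Steinberg representation of $\mr{Sp}_{2n}(L^+_w)$ (Proposition \ref{descent}(3)), and Steinberg of $\mr{GSp}_{2n}(L^+_w)$ restricts to Steinberg of $\mr{Sp}_{2n}(L^+_w)$; since $\tilde\pi_w|_{\mr{Sp}_{2n}(L^+_w)}$ contains $\mr{St}_w$ and the only irreducible smooth representations of $\mr{GSp}_{2n}(L^+_w)$ restricting to (a sum of $\mr{Sp}_{2n}$-conjugates of) Steinberg are the twists of Steinberg, $\tilde\pi_w$ is a character twist of $\mr{St}_{\mr{GSp}_{2n}(L^+_w)}$, which is (3). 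The main obstacle I anticipate is the bookkeeping in the second paragraph: ensuring that a \emph{single} global Hecke character $\chi$ simultaneously (i) corrects the archimedean cocharacters at every embedding $\epsilon$ to land on $\mu(r|_v,\tau)$, and (ii) is globally consistent with $\theta=N(r)$ and with Kret--Shin's (a priori only partially known, as the remark before the proposition warns) recipe relating $\omega_{\tilde\pi}$ to $N(r_{\tilde\pi,\iota})$ — this is where one must be careful to only use the central-character information at archimedean and unramified places where the L-parameter is unambiguously defined, and to defer the full matching of Satake parameters (and hence the identification with $r|_{\Gamma_{L^+}}$) to the subsequent step of the argument rather than attempting it here.
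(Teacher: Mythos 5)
Your proposal follows essentially the same route as the paper: extend $\pi$ to a cuspidal $\tilde\pi$ on $\gsp{2n}{\mb{A}_{L^+}}$ by the Labesse--Langlands-type argument (the paper cites \cite[Proposition 3.1.4]{pat12}, with (1) from \cite[Corollary 3.1.6]{pat12}), use the freedom in the central/similitude direction to pin down the archimedean cocharacters against $N(r)=\theta$, and identify $\tilde\pi_w$ at $w\vert v_{\mr{St}}$ as a twist of Steinberg from the restriction to $\mr{Sp}_{2n}(L^+_w)$. The paper implements the similitude normalization slightly differently --- it builds $|\cdot|^s$ into the central character $\tilde\omega=\tilde\omega_0\cdot|\cdot|^s$ at the outset and then observes that the \emph{unique} quasi-cocharacter of $T$ lifting $\mu(\pi_\epsilon,\epsilon)$ with norm $z\mapsto z^s$ is $\mu(r|_v,\tau)$ --- whereas you twist afterward by an algebraic Hecke character; these are equivalent, and your version silently uses that the required infinity-type is a constant power of the norm (true here because $s$ is independent of the embedding), as Weil's classification over a totally real field demands.

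Two steps you assert are precisely where the paper does real work, and you should not treat them as free. First, ``L-cohomological'' in the sense of \cite{ks} is not an automatic consequence of $\pi_\epsilon$ being cohomological discrete series plus an algebraic central twist: one must exhibit an irreducible algebraic representation $\tilde\xi$ of $\mr{GSp}_{2n}(\cmplx)$ extending $\xi_{\mu(\pi_\epsilon,\epsilon)-\rho}$ whose central character matches that of $\tilde\pi_\epsilon\cdot|\lambda_0|^{n(n+1)/4}$ on $\reals^\times_{>0}$ (the $\frac{n(n+1)}{4}\lambda_0$ term correcting the non-integrality of $\rho$ for $\mr{GSp}_{2n}$); the paper carries this out explicitly. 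Second, your claim that the only irreducibles of $\mr{GSp}_{2n}(L^+_w)$ whose restriction contains $\mr{St}_w$ are twists of the Steinberg representation is the conclusion, not an input: the paper proves it by computing $\op{Ind}_{H_w}^{\mr{GSp}_{2n}(L^+_w)}(\pi_w\times\tilde\omega_w)\cong\bigoplus_{\alpha^2=\tilde\omega_w}\widetilde{\mr{St}}_w\cdot\alpha$ via Frobenius reciprocity and an endomorphism-algebra dimension count. Also, your parenthetical ``possibly after replacing $\pi$ by an outer twist'' in (1) is unnecessary: at archimedean and unramified places all constituents of $\tilde\pi_v|_{\mr{Sp}_{2n}(L^+_v)}$ lie in one L-packet, so $P\circ\phi_{\tilde\pi_v}=\phi_{\pi_v}$ holds for any extension.
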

\begin{proof}
Let $\tilde{\omega}_0$ be a finite-order Hecke character of $L^+$ extending the central character $\omega_{\pi}$, and set $\tilde{\omega}= \tilde{\omega}_0 \cdot | \cdot |^s$, where $s= \sum_{i=1}^n s_i$ is the integer appearing in the definition of $\theta=N(r)$ in Proposition \ref{lift}. Then $\tilde{\omega}$ clearly also extends $\omega_{\pi}$. By the argument of Langlands-Labesse (for $\mr{SL}_2$ in \cite{ll} and generalized in \cite[Proposition 3.1.4]{pat12} using an argument of Flicker from \cite{flicker}), there is a cuspidal automorphic representation $\tilde{\pi}$ of $\mr{GSp}_{2n}(\mb{A}_{L^+})$ with central character $\omega_{\tilde{\pi}}= \tilde{\omega}$ and extending $\pi$ (i.e., $\pi$ is a constituent of the restriction $\tilde{\pi}|_{\mr{Sp}_{2n}(\mb{A}_{L^+})}$). Part (1) follows from \cite[Corollary 3.1.6]{pat12}. We claim that for all $\epsilon \colon L^+_\epsilon \to \cmplx$, $\tilde{\pi}_\epsilon$ is L-cohomological with the claimed infinitesimal character. Indeed, the only quasi-cocharacter of $T \subset \mr{GSpin}_{2n+1}$ (notation as in Proposition \ref{lift}) lifting $\mu(\pi_{\epsilon}, \epsilon)$ and having norm equal to $z \mapsto z^s$, $s=\sum_{i=1}^n s_i$, is the (genuine) cocharacter denoted (in the root datum notation introduced above) $\sum_{i=1}^n s_i (\lambda_i + \frac{\lambda_0}{2})$, and equal to $\mu(r|_v, \tau)$. This must be the infinitesimal character $\mu(\tilde{\pi}_{\epsilon}, \epsilon)$, so we see that $\tilde{\pi}_{\epsilon}$ is L-algebraic and (because it extends $\pi_{\epsilon}$) essentially discrete series. We will check that $\tilde{\pi}_{\epsilon}$ is moreover ``L-cohomological" in the sense of \cite{ks}, i.e. that $\tilde{\pi}_{\epsilon} \cdot |\lambda_0|^{\frac{n(n+1)}{4}}$ is cohomological in the sense of \cite[Definition 1.12]{ks} ($\lambda_0$ corresponds to the similitude character of $\mr{GSp}_{2n}$). Since $\pi_{\epsilon}$ is discrete series with infinitesimal character $\mu(\pi_{\epsilon}, \epsilon)$, it is $\xi^\vee_{\mu(\pi_{\epsilon}, \epsilon)-\rho}$-cohomological (\cite[Theorem V.3.3]{bw}), where $\xi_{\mu(\pi_{\epsilon}, \epsilon)-\rho}$ is the highest-weight $\mu(\pi_{\epsilon}, \epsilon)-\rho$ representation of $\mr{Sp}_{2n}(\cmplx)$, and $\rho$ is the half-sum of positive roots for the choice of root basis used also to parametrize highest weights. We need an extension $\tilde{\xi}$ of $\xi_{\mu(\pi_{\epsilon}, \epsilon)-\rho}$ to $\mr{GSp}_{2n}(\cmplx)$ such that (letting $K$ be a maximal compact subgroup of $\mr{Sp}_{2n}(\reals)$)
\[
H^*(\mf{gsp}_{2n}(\cmplx), K\cdot Z_{\mr{GSp}_{2n}}(\reals); \tilde{\pi}_{\epsilon}\cdot|\lambda_0|^{\frac{n(n+1)}{4}} \otimes \tilde{\xi}^\vee) \neq 0.
\]
From the definition of this cohomology group, and the fact that $\pi_{\epsilon} \otimes \xi^\vee$ has non-zero $(\mf{sp}_{2n}(\cmplx), K)$-cohomology, this reduces to checking that we can find an extension $\tilde{\xi}^\vee$ of $\xi^\vee_{\mu(\pi_{\epsilon}, \epsilon)-\rho}$ such that $\tilde{\xi}$ and $\tilde{\pi}_{\epsilon}\cdot|\lambda_0|^{\frac{n(n+1)}{4}}$ have the same central character; note that we will only have to check this on $\reals^\times_{>0} \subset Z_{\mr{GSp}_{2n}}(\reals)$ since $-1 \in K$.\footnote{The definition of ``cohomological" used in \cite{ks} for a general real reductive group $H$ uses $(\mr{Lie}(H(\cmplx)), K_H^0 Z_H(\reals))$-cohomology, where $K_H^0$ is the identity component of a maximal compact subgroup $K_H$ of $H(\reals)$. For $H= \mr{GSp}_{2n}$, $K_H^0$ is a maximal compact subgroup of $\mr{Sp}_{2n}(\reals)$.} We choose $\tilde{\xi}$ to be the representation with highest weight $\mu(\tilde{\pi}_{\epsilon}, \epsilon)-\rho+\frac{n(n+1)}{4}\lambda_0 \in X_{\bullet}(T)$ (the $\lambda_0$ term corrects the failure of $\rho$ to be integral for $\mr{GSp}_{2n}$). The claim is now proven, since this highest weight representation has the same infinitesimal character as $\tilde{\pi}_{\epsilon} \cdot |\lambda_0|^{\frac{n(n+1)}{4}}$.

Now we prove (3). By Proposition \ref{descent}, part (3), $\pi_{w}$ is Steinberg for any finite $L^+$-place $w|v_{\mr{St}}$. Consider the representation $\tau_w:=\pi_w \times \tilde{\omega}_w$ of the subgroup $H_w:= \mr{Sp}_{2n}(L^+_w)\cdot \widetilde{Z}(L^+_w) \subset \mr{GSp}_{2n}(L^+_w)$, where $\widetilde{Z}$ denotes the center of $\mr{GSp}_{2n}$ (such a representation makes sense because $\tilde{\omega}$ extends $\omega_{\pi}$). We also let $\widetilde{\mr{St}}_w$ denote the Steinberg representation of $\mr{GSp}_{2n}(L^+_w)$. We want, for a suitable twist $\widetilde{\mr{St}}_w \cdot \alpha$ (i.e., twisting by $\mr{GSp}_{2n}(L^+_w) \xrightarrow{\lambda_0} L_w^{+, \times} \xrightarrow{\alpha} \cmplx^\times$), to compare $\widetilde{\mr{St}}_w \cdot \alpha$ to $\op{Ind}_{H_w}^{\mr{GSp}_{2n}(L^+_w)}(\tau_w)$, for which we will simply write $\op{Ind}(\tau_w)$ in what follows. First we note that the restriction $\widetilde{\mr{St}}_w|_{H_w}$ is isomorphic to $\mr{St}_w$ extended trivially along $\widetilde{Z}(L^+_w)$, as is clear from the definition of the Steinberg representation. Next, since $\tilde{\omega}_w|_{\pm 1}$ is trivial (as $\tilde{\omega}_w$ extends $\omega_{\pi_w}$), there exists a character $\alpha$, in fact $L_w^{+,\times}/(L_w^{+,\times})^2= [\mr{GSp}_{2n}(L^+_w): H_w]$ distinct characters, satisfying $\alpha^2=\tilde{\omega}_w$. Then, for each such $\alpha$, Frobenius reciprocity and Schur's lemma imply that
\[
\op{Hom}_{\mr{GSp}_{2n}(L^+_w)}(\widetilde{\mr{St}}_w \cdot \alpha, \op{Ind}(\tau_w))= \op{Hom}_{H_w}(\tau_w, \tau_w),
\]
is one-dimensional. Again by Frobenius reciprocity $\tilde{\pi}_w$ is a Jordan-H\"{o}lder constituent of the (easily seen to be semisimple) representation $\op{Ind}(\tau_w)$, and since (by definition of the Steinberg representation) all $\mr{GSp}_{2n}(L^+_w)$-conjugates of $\tau_w$ are isomorphic, another application of Frobenius reciprocity shows that $\op{Hom}_{\mr{GSp}_{2n}(L^+_w)}(\op{Ind}(\tau_w), \op{Ind}(\tau_w))$ has dimension $[\mr{GSp}_{2n}(L^+_w): H_w]$. It follows that 
\[
\op{Ind}(\tau_w) \cong \bigoplus_{\alpha^2= \omega_w} \widetilde{\mr{St}}_w \cdot \alpha,
\]
and therefore for some such $\alpha$, $\tilde{\pi}_w$ is isomorphic to $\widetilde{\mr{St}}_w \cdot \alpha$.
\end{proof}
The main result of \cite{ks} now implies the existence of a Galois representation
\[
r_{\tilde\pi}=r_{\tilde\pi, \iota}: \Gamma_L^+ \to \gspin{2n+1}{\bQp}
\]
satisfying most of the expected properties, including that $P(r_{\tilde{\pi}})= P(r)$, $N(r_{\tilde{\pi}})= \tilde{\omega}$ (abusively writing $\tilde{\omega}$ for the Galois character associated to $\tilde{\omega}$ by global class field theory), and $r_{\tilde{\pi}}$ is locally compatible with $\tilde{\pi}$ at almost all (unramified) places. This $r_{\tilde{\pi}}$ is nearly the Galois representation we want, but first we need to relate $N(r)$ to the Hecke character $\tilde{\omega}$:

\begin{cor}\label{twist}
At all places $v$ of $L^+$, the central character $\omega_{\pi_v}$ is computed in terms of $\phi_{\pi_v}$ according to the expected description in \cite[III.10.3]{bor}. The representation $r_{\tilde\pi}$ is conjugate to $r \otimes \delta$ for a continuous Galois character $\delta: \Gamma_L^+ \to Z(\bQp)$, and we can choose $\tilde{\omega}$ in Proposition \ref{lift aut} to be the Hecke character corresponding to $N(r)$ under global class field theory, so that $\delta^2=1$. 
\end{cor}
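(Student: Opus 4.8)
The plan is to note that $r_{\tilde\pi}$ and $r$ have the same image under $P$, hence differ by a central twist, and then to identify that twist by comparing Clifford norms; the extra subtlety is that we do not yet know the recipe of \cite[III.10.3]{bor} for $\omega_{\pi_v}$ at the bad primes, so we must extract it from $r$ itself.

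Since it is already recorded that $P(r_{\tilde\pi}) = P(r)$ (after conjugating $r_{\tilde\pi}$ if necessary, which is harmless because $\op{std}(\bar r)$ is irreducible), the map $\gamma \mapsto \delta(\gamma) := r_{\tilde\pi}(\gamma)\,r(\gamma)^{-1}$ lands in $\ker P = Z$ and, $Z$ being central, is a continuous homomorphism $\delta \colon \Gamma_{L^+} \to Z(\bQp)$; thus $r_{\tilde\pi} \cong r \otimes \delta$, which is assertion (2). Applying the Clifford norm and using that $N|_Z \colon \mb G_m \to \mb G_m$ is $z \mapsto z^2$ gives $\tilde\omega = N(r_{\tilde\pi}) = N(r)\cdot \delta^2$, i.e. $\delta^2 = \tilde\omega \cdot N(r)^{-1}$ as Galois characters, equivalently as Hecke characters of $\mb A_{L^+}^\times$. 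The crucial local remark is that for every place $v$ of $L^+$ the element $-1 \in L_v^{+,\times} = Z_{\mr{Sp}_{2n}}(L^+_v)$ satisfies $(-1)^2 = 1$, so its image $\op{Art}_v(-1)$ under the local reciprocity map has order dividing $2$; hence $\delta(\op{Art}_v(-1)) \in \{\pm 1\}$ and $\delta^2$ restricts trivially to $\mu_2(\mb A_{L^+}) = Z_{\mr{Sp}_{2n}}(\mb A_{L^+})$.

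The same remark gives assertion (1). First, any lift of the Arthur parameter $\phi_{\pi_v} \colon W_{L^+_v} \to \mr{SO}_{2n+1}(\cmplx)$ to $\mr{GSpin}_{2n+1}(\cmplx)$ exists (as $H^2(W_{L^+_v},\cmplx^\times) = 0$) and is unique up to a $Z$-valued character, so composing it with $N$ and restricting to $Z_{\mr{Sp}_{2n}}(L^+_v)$ yields a character independent of the choice of lift; a check on the root datum of $\mr{GSpin}_{2n+1}$ identifies this character with the central character attached to $\phi_{\pi_v}$ in \cite[III.10.3]{bor}. Second, $\omega_{\pi_v}$ is the $v$-component of $\omega_\pi = \tilde\omega|_{\mu_2(\mb A_{L^+})}$ (because $\tilde\pi|_{\mr{Sp}_{2n}(\mb A_{L^+})} \supset \pi$), which equals $N(r)|_{\mu_2(\mb A_{L^+})}$ by the previous paragraph; and by local--global compatibility for $r_\Pi \cong \op{std}(r)|_{\Gamma_{L^+}}$ (from \cite{sh11} for $v \nmid p$, and its $p$-adic counterpart for $v \mid p$) the restriction $\op{std}(r)|_{\Gamma_{L^+_v}}$ is, up to Frobenius-semisimplification, $\op{std}\circ\phi_{\pi_v}$, so $r|_{\Gamma_{L^+_v}}$ is a $\mr{GSpin}_{2n+1}$-lift of $\phi_{\pi_v}$ (evaluation at the finite-order element $\op{Art}_v(-1)$ being unaffected by Frobenius-semisimplification). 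Hence $\omega_{\pi_v} = N(r)|_{\mu_2(L^+_v)}$ is exactly the Borel recipe.

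Finally, assertion (3): by the previous paragraph $N(r)$ restricts to $\omega_\pi$ on $\mu_2(\mb A_{L^+})$, and, like the original $\tilde\omega$, the Hecke-character avatar $\theta$ of $N(r)$ is a finite-order twist of $|\cdot|^s$ (Proposition~\ref{lift}); thus $\theta$ is an admissible choice of $\tilde\omega$ in Proposition~\ref{lift aut} (the verification there that $\tilde\pi_\epsilon$ is L-cohomological uses only the infinite type $|\cdot|^s$ and the cocharacter $\mu(\pi_\epsilon,\epsilon)$, neither of which changes). Making this choice we have $\tilde\omega = N(r)$ and therefore $\delta^2 = \tilde\omega\cdot N(r)^{-1} = 1$. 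The two steps demanding genuine care are the root-datum identification of ``$N$ of a $\mr{GSpin}_{2n+1}$-lift, restricted to the centre'' with the classical recipe of \cite[III.10.3]{bor}, and the appeal to local--global compatibility above $p$ to guarantee that $r|_{\Gamma_{L^+_v}}$ really does project to $\phi_{\pi_v}$ at the torsion element $\op{Art}_v(-1)$; everything else is formal.
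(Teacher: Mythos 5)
Your proof is correct and follows essentially the same route as the paper's: identify $\delta$ as a $Z$-valued twist via $P(r_{\tilde\pi})=P(r)$, use that $N$ squares the centre together with local--global compatibility (including \cite{blggtens} at $v\mid p$) to match the Borel recipe, and conclude $\delta^2=1$ after re-choosing $\tilde\omega=N(r)$. The only differences are cosmetic --- you prove the twisting claim before the central-character claim and use $r$ rather than $r_{\tilde\pi}$ to furnish the local $\mr{GSpin}_{2n+1}$-lift of $\phi_{\pi_v}$, and you make explicit the triviality of $\delta^2$ on $\mu_2(\mb A_{L^+})$ that the paper leaves implicit.
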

\begin{proof}
Recall the local Langlands desideratum that if $\phi_{\pi_v}$ is the L-parameter of $\pi_v$, then $\omega_{\pi_v}$ should be computed as follows: lift $\phi_{\pi_v}$ to a $\mr{GSpin}_{2n+1}(\cmplx)$-valued parameter $\widetilde{\phi}_{\pi_v}$, and then restrict
\[
N(\widetilde{\phi}_{\pi_v}) \circ \op{rec}_v^{-1} \colon (L_v^+)^\times \to \cmplx^\times
\]
to $\mu_2(L_v^+)$. The Weil-Deligne representation associated to $r_{\tilde{\pi}}|_{\Gal{L^+_v}}$ (for $v$ above $p$ this must be taken in the sense of Fontaine: see, e.g., \cite{bg:Gdef} for a description for representations valued in general groups) provides such a lift $\widetilde{\phi}_{\pi_v}$,\footnote{At $v$ above $p$, this claim makes use of local-global compatibility of $r_{\Pi, \iota}|_{\Gal{L^+_v}}$. Since $2n+1$ is odd, this is proven in \cite[Theorem A]{blggtens}.} and then (since, by \cite[Theorem A]{ks}, $N(r_{\tilde{\pi}})$ corresponds to $\tilde{\omega}$) $N(\widetilde{\phi}_{\pi_v})$ corresponds to $\tilde{\omega}_v$ under local class field theory; but by construction $\tilde{\omega}_v$ extends $\omega_{\pi_v}$, so the first claim is proven.
Since $r$ and $r_{\tilde{\pi}}$ both lift $P(r)$, they are twists: $r_{\tilde{\pi}} \cong r \otimes \delta$ for some character $\delta \colon \Gal{L^+} \to Z(\bQp)$. As they both have Clifford norm with the same Hodge-Tate weights, $\delta$ is finite-order. It follows, since $N(r_{\tilde{\pi}})= \delta^2 \cdot N(r)$, that the Hecke character corresponding to $N(r)$ also extends $\omega_{\pi}$, hence that we can choose $\tilde{\omega}$, and thus $\tilde{\pi}$, in Proposition \ref{lift aut} such that $\tilde{\omega}$ corresponds to $N(r)$ under global class field theory.
\end{proof}

Choosing $\tilde{\omega}$ corresponding to $N(r)$, constructing $\tilde{\pi}$ as in Proposition \ref{lift aut}, and then if necessary using Corollary \ref{twist} to twist $\tilde{\pi}$ by the Hecke character corresponding to $\delta^{-1}$, we obtain our first main result:
\begin{thm} \label{residual aut}
Let $\bar r$ be as in the beginning of this section, satisfying Hypothesis \ref{modppotauthyp}. Then $\bar r$ is potentially automorphic, i.e., there exists a totally real extension $L^+/F^+$ and a cuspidal automorphic representation $\tilde\pi$ of $\gsp{2n}{\mb A_{L^+}}$ that is L-cohomological and twist of Steinberg at some finite prime, such that a suitable $\mr{GSpin}_{2n+1}$-conjugate of the Galois representation $r_{\tilde\pi}: \Gamma_{L^+} \to \gspin{2n+1}{\bQp}$ constructed in \cite{ks} lifts $\bar r|_{L^+}$.
\end{thm}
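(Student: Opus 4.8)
The proof of Theorem \ref{residual aut} is essentially a matter of assembling the chain of constructions already built up in this section and making one final bookkeeping adjustment. The plan is as follows. First I would apply Proposition \ref{lift} (in its $\mr{GSpin}_{2n+1}$ form, using the $\mr{Sp}_{2n}$-Steinberg local condition at $v_{\mr{St}}$ and the potentially diagonalizable condition at $v \mid p$) to pass from $\bar r$ to a geometric lift $r \colon \Gamma_{K^+} \to \mr{GSpin}_{2n+1}(\bZp)$ over a suitable totally real solvable extension $K^+/F^+$, with prescribed Clifford norm $\theta = N(r)$. Here one must be slightly careful that Hypothesis \ref{modppotauthyp} — which is a mild relaxation of Hypothesis \ref{hypotheses} omitting only the local lift conditions (4) at $p$ — is enough, because those local lifts are precisely what Proposition \ref{lift} supplies by hand. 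Then I would feed $\op{std}(r)$ into Proposition \ref{big pi} to get $L^+/K^+$ (totally real) and the regular L-algebraic self-dual cuspidal $\Pi$ on $\mr{GL}_{2n+1}(\mb A_{L^+})$ with $\op{std}(r)|_{\Gamma_{L^+}} \cong r_{\Pi,\iota}$, Steinberg at $v_{\mr{St}}$; descend $\Pi$ to $\pi$ on $\mr{Sp}_{2n}(\mb A_{L^+})$ via Proposition \ref{descent}; and extend $\pi$ to $\tilde\pi$ on $\mr{GSp}_{2n}(\mb A_{L^+})$ via Proposition \ref{lift aut}, taking the extension of the central character to be $\tilde\omega = \tilde\omega_0 \cdot |\cdot|^s$ with $s = \sum s_i$ as in Proposition \ref{lift} so that $\tilde\pi$ is L-cohomological with the right infinitesimal character and twist-of-Steinberg at $v_{\mr{St}}$. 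This $\tilde\pi$ satisfies the hypotheses (St) and (L-coh) of \cite{ks}, so \cite{ks} attaches $r_{\tilde\pi,\iota} \colon \Gamma_{L^+} \to \mr{GSpin}_{2n+1}(\bQp)$ with $P(r_{\tilde\pi}) = P(r)$ and $N(r_{\tilde\pi}) = \tilde\omega$.

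The final step is to match $r_{\tilde\pi}$ with $r$ on the nose (up to conjugacy), and here I would invoke Corollary \ref{twist}. Since $r$ and $r_{\tilde\pi}$ both lift $P(r)$ along $\mr{GSpin}_{2n+1} \to \mr{SO}_{2n+1}$, whose kernel is the central $\mb G_m = Z$, they differ by a central twist $r_{\tilde\pi} \cong r \otimes \delta$ for a continuous $\delta \colon \Gamma_{L^+} \to Z(\bQp)$; comparing Clifford norms shows $\delta^2 = N(r_{\tilde\pi})/N(r)$, which is finite-order (both norms have the same Hodge–Tate weights by construction of $\theta$), so $\delta$ is finite-order. Corollary \ref{twist} lets me choose $\tilde\omega$ to correspond to $N(r)$ under class field theory, forcing $\delta^2 = 1$; then twisting $\tilde\pi$ by the finite-order Hecke character attached to $\delta^{-1}$ (which preserves cuspidality, L-cohomologicality, and the twist-of-Steinberg property at $v_{\mr{St}}$, since twisting by a character commutes with all of these) replaces $r_{\tilde\pi}$ by $r_{\tilde\pi} \otimes \delta^{-1} \cong r$. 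Thus after this replacement a $\mr{GSpin}_{2n+1}$-conjugate of $r_{\tilde\pi}$ equals $r$, and reducing mod $p$ gives $\bar r|_{\Gamma_{L^+}}$, since $r$ lifts $\bar r|_{\Gamma_{K^+}}$ and hence $\bar r|_{\Gamma_{L^+}}$.

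The step I expect to be the genuine obstacle — though it is already handled by Corollary \ref{twist} and the discussion preceding it — is the central-character bookkeeping: a priori one does not know Borel's expected formula (\cite[III.10.3]{bor}) computing $\omega_{\pi_v}$ from $\phi_{\pi_v}$ at the ramified finite places, so one cannot directly assert that the Hecke character attached to $N(r)$ extends $\omega_\pi$. The resolution, which I would spell out only by citing Corollary \ref{twist}, is to run the argument once with an arbitrary finite-order extension $\tilde\omega_0$ of $\omega_\pi$, use the resulting $r_{\tilde\pi}$ together with local-global compatibility for $r_{\Pi,\iota}$ (at $v \mid p$ this is \cite[Theorem A]{blggtens}, valid since $2n+1$ is odd) to \emph{deduce} Borel's formula a posteriori, and conclude that $N(r)$ does extend $\omega_\pi$ — at which point one re-runs Proposition \ref{lift aut} with $\tilde\omega$ equal to the Hecke character of $N(r)$ and then corrects by the now-quadratic $\delta$. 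Everything else is a direct concatenation of the propositions already proved.
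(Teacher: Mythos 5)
Your proposal is correct and follows the paper's own proof essentially verbatim: the paper likewise obtains the lift $r$ over $K^+$ from Proposition \ref{lift}, concatenates Propositions \ref{big pi}, \ref{descent}, and \ref{lift aut}, and then invokes Corollary \ref{twist} to choose $\tilde\omega$ corresponding to $N(r)$ and absorb the residual finite-order central twist $\delta$. (One immaterial slip: the extension $K^+/F^+$ supplied by \cite[Lemma 4.1.2]{cht} need not be solvable, but nothing in your argument uses that.)
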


\subsection{The $p$-adic case}
Note that Theorem \ref{residual aut} makes no local hypothesis on $\bar{r}$ at finite primes. We can prove an analogue of Theorem \ref{residual aut} for $p$-adic representations if we assume they satisfy a Steinberg-type local condition at some finite prime. The argument given above implies this with little modification:
\begin{thm}\label{padicpotaut}
Let $F^+$ be a totally real field, and let $r \colon \Gamma_{F^+} \to \mr{GSpin}_{2n+1}(\bZp)$ be a continuous representation, unramified outside a finite set $S$ of primes containing all $v\vert p$, having geometric Clifford norm $N(r)$, and satisfying:
\begin{itemize}
    \item $p>2(2n+1)$.
    \item $\bar{r}$ is odd.
    \item For some finite place $v_{\mr{St}}$ of $F^+$, the Frobenius semisimple Weil-Deligne representation associated to $r|_{\Gamma_{F^+_{v_{\mr{St}}}}}$ is equivalent to a twist of the Steinberg-type Langlands parameter (equivalently, $P(r|_{\Gamma_{F^+_{v_{\mr{St}}}}})$ is a Steinberg parameter for $\mr{Sp}_{2n}(F^+_{v_{\mr{St}}})$). 
    \item There exist a quadratic CM extension $F/F^+$ and a character $\mu \colon \Gamma_{F^+} \to \bZp^\times$ such that 
    \begin{itemize}
    \item $F$ does not contain $\zeta_p$, and $\op{std}(\bar{r})|_{\Gamma_{F(\zeta_p)}}$ is irreducible.  
    \item $(\op{std}(r)|_{\Gamma_{F}}, \mu)$ is polarized, and for some (any) choice of prolongation 
    \[
    \rho(\op{std}(\bar{r})|_{\Gamma_{F}}, \mu) \colon \Gamma_{F^+} \to \mc{G}_{2n+1}(\bZp),
    \]
$\rho(\op{std}(r), \mu)|_{\Gamma_{F^+_v}}$ is globally realizable for each $v \vert p$.
    \end{itemize}
\end{itemize}
Then there exist a totally real extension $L^+/F^+$ and a cuspidal automorphic representation $\tilde{\pi}$ of $\mr{GSp}_{2n}(\mathbb{A}_{L^+})$ that is L-cohomological and locally at primes above $v_{\mr{St}}$ isomorphic to twists of the Steinberg representation, such that $r_{\tilde{\pi}, \iota} \cong r|_{\Gamma_{L^+}}$. The extension $L^+$ may be chosen to avoid any fixed finite Galois extension $F^{\mr{avoid}}/F^+$, and to be split at all places above $S$ (and in particular above $v_{\mr{St}}$).
\end{thm}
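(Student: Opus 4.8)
The plan is to follow the blueprint of the mod $p$ case (Theorem \ref{residual aut}) essentially verbatim, replacing the use of Theorem \ref{lifting thm} at the start with the hypothesis that we already have the $p$-adic lift $r$ in hand. First I would fix the set $S$ (enlarging it to contain an auxiliary place $v_{\mr{St}}$, which here we are handed, plus all ramified and $p$-adic places), and then run a base-change/linear-disjointness step: using \cite[Lemma 4.1.2]{cht} and Remark \ref{splitplaces} choose a totally real $K^+/F^+$, linearly disjoint from $F(\bar{r},\mu_p)F^{\mr{avoid}}$, split at all places of $S$, so that $\bar{r}|_{\Gamma_{K^+}}$ is trivial at the places of $S$ and the CM extension $K=FK^+$ still has $\op{std}(\bar r)|_{\Gamma_{K(\mu_p)}}$ irreducible with $K$ not containing $\zeta_p$. (The point of making $v_{\mr{St}}$ split and $\bar r$ trivial there is inessential but matches the setup of \S\ref{sec:automorphy}; note here we do not need to \emph{construct} a Steinberg lift, since $r$ already is Steinberg-type at $v_{\mr{St}}$.) The hypotheses guarantee that the polarized pair $(\op{std}(r)|_{\Gamma_F},\mu)$ with its prolongation $\rho$ satisfies the input conditions of \cite[Theorem C]{blggt} (or, for the sharpest statement, \cite[Theorem 2.1.16]{calegari-emerton-gee}): irreducibility after restriction to $\Gamma_{F(\zeta_p)}$, oddness of $\bar r$ (which descends to oddness/essential conjugate-self-duality of $\op{std}(\bar r)$), Steinberg at $v_{\mr{St}}$, the place-above-$p$ condition is exactly ``globally realizable'', and $p>2(2n+1)$.

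Next I would quote \cite[Theorem C]{blggt} / \cite[Theorem 2.1.16]{calegari-emerton-gee} to produce, after a further totally real base change $L^+/K^+$ (again chosen via Remark \ref{splitplaces} to be linearly disjoint from the relevant fields and split over $S$), a regular L-algebraic self-dual cuspidal automorphic representation $\Pi$ of $\mr{GL}_{2n+1}(\mb A_{L^+})$ with $\op{std}(r)|_{\Gamma_{L^+}}\cong r_{\Pi,\iota}$, matching Hodge--Tate cocharacters and with $\Pi_w$ an unramified twist of Steinberg at $w\vert v_{\mr{St}}$ --- this is exactly Proposition \ref{big pi}, except the local behavior at $v_{\mr{St}}$ now comes directly from the Steinberg-type hypothesis on $r$ via local-global compatibility and Choi's lemma \cite[Lemma 1.3.4(2)]{blggt} rather than from the Proposition \ref{lift} construction. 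Then, verbatim as in Propositions \ref{descent}, \ref{lift aut} and Corollary \ref{twist}: descend $\Pi$ to a cuspidal $\pi$ on $\mr{Sp}_{2n}(\mb A_{L^+})$ via Arthur (\cite{art13}), with $\pi_w\cong\mr{St}_w$ at $w\vert v_{\mr{St}}$ and the expected archimedean parameters; extend $\pi$ to a cuspidal $\tilde\pi$ on $\mr{GSp}_{2n}(\mb A_{L^+})$ with central character chosen so that $\tilde\omega$ corresponds to $N(r)$ under class field theory --- here we use that $N(r)$ is geometric (so it extends $\omega_\pi$ after the two-part argument of Corollary \ref{twist}, where one first uses the Kret--Shin $r_{\tilde\pi}$ to compute $\omega_{\pi_v}$ via \cite[III.10.3]{bor}), that $\tilde\pi$ is L-cohomological (the quasi-cocharacter bookkeeping in Proposition \ref{lift aut}), and that $\tilde\pi_w$ is a twist of $\widetilde{\mr{St}}_w$ at $w\vert v_{\mr{St}}$. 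Applying \cite[Theorem A]{ks} gives $r_{\tilde\pi,\iota}\colon\Gamma_{L^+}\to\mr{GSpin}_{2n+1}(\bQp)$ with $P(r_{\tilde\pi})=P(r)$ and $N(r_{\tilde\pi})=N(r)$.

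Finally, to conclude $r_{\tilde\pi,\iota}\cong r|_{\Gamma_{L^+}}$ rather than merely a central twist: since both lift $P(r)$ they differ by a character $\delta\colon\Gamma_{L^+}\to Z(\bQp)$, and since their Clifford norms agree ($\delta^2 N(r)=N(r_{\tilde\pi})=N(r)$) the character $\delta$ has order dividing $2$; twisting $\tilde\pi$ by the finite-order Hecke character corresponding to $\delta^{-1}$ (which changes $r_{\tilde\pi}$ by $\delta^{-1}$ and preserves all the local properties) makes them equal --- this is the same endgame as in the passage following Corollary \ref{twist}. The line I expect to require the most care is the compatibility bookkeeping at $v_{\mr{St}}$ under base change: one must check that the Steinberg-type condition on $r|_{\Gamma_{F^+_{v_{\mr{St}}}}}$, which is a condition on the full $\mr{GSpin}_{2n+1}$-valued Weil--Deligne representation, propagates correctly through $\op{std}$, through Arthur's local classification for $\mr{Sp}_{2n}$, and back up to $\mr{GSp}_{2n}$, exactly as in Propositions \ref{descent}(3) and \ref{lift aut}(3) --- but since $r$ is literally given to be Steinberg-type there (not merely lying on a Steinberg component), this is if anything slightly easier than in the mod $p$ case. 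The genuinely new hypothesis being exploited, as opposed to the mod $p$ argument, is the ``globally realizable'' condition at $v\vert p$, which is precisely what \cite{calegari-emerton-gee} requires to run potential automorphy for a $p$-adic (rather than residual) representation with prescribed $p$-adic Hodge type.
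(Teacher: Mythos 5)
Your proposal is correct and follows essentially the same route as the paper: apply the Calegari--Emerton--Gee potential automorphy theorem to a prolongation of $\op{std}(r)$, descend via Arthur to $\mr{Sp}_{2n}$, extend to $\mr{GSp}_{2n}$, invoke Kret--Shin, and remove the residual central twist exactly as in Corollary \ref{twist}. The only points the paper makes explicit that you elide are the identification $\mu = \delta_{F/F^+}$ (forced by the Steinberg hypothesis plus oddness, which lets one take the prolongation to be the standard $\rho(r)$) and the descent of the automorphic representation from the CM field $L$ back to $L^+$; your preliminary base change to $K^+$ is, as you note yourself, unnecessary in the $p$-adic case.
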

\begin{rmk}
As previously noted, the local hypothesis at $v \vert p$ is strictly weaker than assuming $\op{std}(r|_{\Gamma_{F^+_v}})$ is regular and potentially diagonalizable. We note that the definition of globally realizable in \cite{calegari-emerton-gee} has already fixed a CM extension, and we have tried to phrase the statement of the theorem in order to allow flexibility in this choice of extension. It is possible, for instance, to replace an initially given $F$ with one that is split at places above $S \setminus \{v \vert p\}$. 
\end{rmk}
\begin{proof}
The polarization condition implies $\op{std}(r)|_{\Gamma_{F}} \cong \op{std}(r)|_{\Gamma_{F}} \otimes \mu|_{\Gamma_{F}}$, since of course $\op{std}(r)|_{\Gamma_{F}}$ arises from $P(r)$. Under the hypothesis on $r|_{\Gamma_{F_{v_{\mr{St}}}^+}}$, this implies $\mu|_{\Gamma_{F}}=1$ (else $\op{std}(r)|_{\Gamma_{F}}$ would be induced from a finite extension). By oddness (which by \cite[Lemma 1.4.4]{calegari-emerton-gee} is automatic), we conclude that $\mu= \delta_{F/F^+}$, and then we may (\cite[Remark 2.1.9]{calegari-emerton-gee}) take our prolongation that is globally realizable at places above $p$ simply to be the familiar composite
\[
\Gamma_{F^+} \xrightarrow{P(r) \times \mr{res}_F} \mr{SO}_{2n+1}(\bZp) \times \mr{Gal}(F/F^+) \to \mc{G}_{2n+1}(\bZp),
\]
the standard prolongation $\rho(r)$ defined in \S \ref{sec:notation}.

The prolongation $\rho(r)$ satisfies the hypotheses of \cite[Corollary 4.2.12]{calegari-emerton-gee}, so we find a CM extension $L/F$ (with totally real subfield $L^+$) and a cuspidal automorphic representation $\Pi_L$ of $\mr{GL}_{2n+1}(\mb{A}_L)$ that is polarized and regular L-algebraic such that $r_{\Pi_L, \iota} \cong \op{std}(r)|_{\Gamma_L}$. Moreover, 
we may assume that $L^+/F^+$ is split at the primes in $S$ and linearly disjoint from the fixed $F^{\mr{avoid}}$: indeed, once we know $\mr{std}(r)|_{\Gamma_F}$ is potentially automorphic, we deduce from \cite[Theorem 5.5.1]{blggt} (the potentially diagonalizable hypothesis there is only to invoke the potential automorphy theorem of \textit{loc. cit.}) that it belongs to a compatible system of odd, regular, weakly irreducible (as in \cite{calegari-emerton-gee}) polarized representations of $\Gamma_F$, and then \cite[Theorem 2.1.16]{calegari-emerton-gee} shows that we can indeed choose $L/F$ (and $\Pi_L$) such that $L^+/F^+$ is split at all primes above $S$.\footnote{Here we are feeding the conclusion of \cite[Corollary 4.2.12]{calegari-emerton-gee} back into one of its essential ingredients in order to obtain a slightly stronger result.}
Next, we note that $\Pi_L^c \cong \Pi_L$ (where $c$ is complex conjugation), so $\Pi_L$ descends to a (regular L-algebraic) cuspidal automomrphic representation $\Pi$ of $\mr{GL}_{2n+1}(\mb{A}_{L^+})$; it has an associated Galois representation that restricts to the irreducible $\op{std}(r)|_{\Gamma_L}$, so twisting if necessary we may in fact assume $r_{\Pi, \iota} \cong \op{std}(r)|_{\Gamma_{L^+}}$. We take this $\Pi$ in Proposition \ref{big pi}, and from this point on the proof of our theorem is identical to the proof of Theorem \ref{residual aut}; note that both $N(r)$ and $\op{std}(r)$ being geometric suffices to imply that $r$ is geometric: this follows from work of Wintenberger and Conrad (see \cite[Theorem 6.2]{conrad}). 
\end{proof}

\section{Solvable Descent and Compatible Systems} \label{sec:compatible}
Potential automorphy theorems for the group $\mr{GL}_N$ imply, by Taylor's Brauer induction argument (\cite[\S 5.3.3]{taylor:galoisreps}), that one can often put a single $p$-adic representation $\Gamma_{F^+} \to \mr{GL}_N(\bQp)$ into a compatible system (over this exact field $F^+$, not merely over the extension $L^+$ where it is shown to be automorphic). In this section we will give a variant of Taylor's argument that applies to a single $\mr{GSpin}_{2n+1}(\bQp)$-valued representation. 
We postpone the proofs of the necessary solvable descent result to Theorem \ref{solvabledescent}, and we first explain how to apply them and our potential automorphy theorem to put certain $\mr{GSpin}_{2n+1}(\bQp)$-valued Galois representations in compatible systems:
\begin{thm}\label{compsystem}
Let $r \colon \Gamma_{F^+} \to \mr{GSpin}_{2n+1}(\bZp)$ satisfy the hypotheses of Theorem \ref{padicpotaut}. Then for all primes $\ell$ and choices of isomorphism $\iota_\ell \colon \cmplx \xrightarrow{\sim} \bQl$ there is a continuous representation $r_{\iota_{\ell}} \colon \Gamma_{F^+} \to \mr{GSpin}_{2n+1}(\bQl)$ such that:
\begin{itemize}
    \item For all but finitely many primes $v$ at which $r$ is unramified, the semisimple conjugacy class of $\iota^{-1}r(\frob{v})^{\mr{ss}}$ agrees with that of $\iota_{\ell}^{-1}r_{\iota_{\ell}}(\frob{v})^{\mr{ss}}$.
    \item For all primes $v \vert \ell$, $r_{\iota_{\ell}}$ is de Rham, and its Hodge-Tate cocharacters are determined up to conjugacy by those of $r$: for all embeddings $\tau \colon F^+ \to \bQl$, determining a place $v \vert \ell$ of $F^+$, and inducing $\iota \iota_{\ell}^{-1} \tau \colon F^+ \to \bQp$ and a corresponding place $\iota_{\ell}^*(v) \vert p$ of $F^+$, the conjugacy classes of $\iota_{\ell}^{-1}\mu(r_{\iota_{\ell}}|_{\Gamma_{F^+_v}}, \tau)$ and $\iota^{-1}\mu(r|_{\Gamma_{F^+_{\iota_{\ell}^*(v)}}}, \iota \iota_{\ell}^{-1} \tau)$ coincide.
\end{itemize}
\end{thm}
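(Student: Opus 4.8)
The plan is to bootstrap from the $p$-adic potential automorphy theorem and solvable descent, following Taylor's Brauer induction argument for $\mr{GL}_N$ (\cite[\S 5.3.3]{taylor:galoisreps}, \cite[Theorem 5.5.1]{blggt}), with a case-by-case treatment to get around the fact that no Brauer argument applies directly to $\mr{GSpin}_{2n+1}$-valued representations. By Theorem \ref{padicpotaut} there is a totally real extension $L^+/F^+$ -- which, refining the construction, we may and do take to be Galois over $F^+$ -- and an L-cohomological cuspidal automorphic representation $\tilde\pi$ of $\mr{GSp}_{2n}(\mb{A}_{L^+})$ with $r_{\tilde\pi,\iota}\cong r|_{\Gamma_{L^+}}$. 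For each prime $\ell$ and isomorphism $\iota_\ell$, applying \cite{ks} to $\tilde\pi$ (with $\iota_\ell$ in place of $\iota$) produces $r_{\tilde\pi,\iota_\ell}\colon\Gamma_{L^+}\to\mr{GSpin}_{2n+1}(\bQl)$; since these are all attached to the single $\tilde\pi$, local--global compatibility at unramified places and the recipe reading Hodge--Tate cocharacters off $\tilde\pi_\infty$ show that $\{r_{\tilde\pi,\iota_\ell}\}_\ell$ is a compatible system over $L^+$ of exactly the kind asserted in the theorem, whose $\iota$-member is $r|_{\Gamma_{L^+}}$. It remains to descend this system from $L^+$ to $F^+$; we argue according to the algebraic monodromy group $\mathbf{G}_r$ of $r$, the possibilities for which are constrained both by the irreducibility of $\op{std}(\bar r)|_{\Gamma_{F(\zeta_p)}}$ and by \cite{ks}.

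\emph{Generic case $\mathbf{G}_r=\mr{GSpin}_{2n+1}$.} By Brauer's theorem write $1_{\mr{Gal}(L^+/F^+)}=\sum_i a_i\,\op{Ind}_{H_i}^{\mr{Gal}(L^+/F^+)}\chi_i$ with the $H_i$ (elementary, hence) solvable, set $M_i=(L^+)^{H_i}$, and let $\psi_i\colon\Gamma_{M_i}\to\cmplx^\times$ be the finite-order character inflated from $\chi_i$. Since $r|_{\Gamma_{L^+}}$ is the restriction of $r|_{\Gamma_{M_i}}$, it is $\mr{Gal}(L^+/M_i)$-invariant, hence so is $\tilde\pi$ (the attached Galois representation determining the automorphic representation); as the Galois representation $r|_{\Gamma_{M_i}}$ visibly descends $r|_{\Gamma_{L^+}}$, Theorem \ref{solvabledescent} produces a cuspidal automorphic representation $\tilde\pi_i$ of $\mr{GSp}_{2n}(\mb{A}_{M_i})$ with base change $\tilde\pi$, and after a finite-order twist (pinned down by comparing Frobenius traces at a density-one set of primes split in $L^+/M_i$) we may assume $r_{\tilde\pi_i,\iota}\cong r|_{\Gamma_{M_i}}$. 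For each $\ell$ form the $2^n$-dimensional virtual representation
\[
S_{\iota_\ell}:=\sum_i a_i\,\op{Ind}_{\Gamma_{M_i}}^{\Gamma_{F^+}}\bigl(\op{spin}(r_{\tilde\pi_i,\iota_\ell})\otimes\psi_{i,\iota_\ell}\bigr).
\]
The projection formula and the Brauer identity give $S_\iota\cong\op{spin}(r)$, and a Mackey computation (using that $r|_{\Gamma_{L^+}}$ is Galois-invariant and $\psi_i$ is trivial on $\Gamma_{L^+}$) gives $S_{\iota_\ell}|_{\Gamma_{L^+}}\cong\op{spin}(r_{\tilde\pi,\iota_\ell})$. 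The standard principle that a virtual representation in a compatible system which is honest at one member is honest throughout (applied via matching Frobenius characteristic polynomials) shows each $S_{\iota_\ell}$ is an honest $\mr{GL}_{2^n}(\bQl)$-representation, so $\{S_{\iota_\ell}\}$ is a compatible system with $S_\iota=\op{spin}(r)$. Finally, a short computation gives $N_{\mr{GL}_{2^n}}\bigl(\op{spin}(\mr{GSpin}_{2n+1})\bigr)=\op{spin}(\mr{GSpin}_{2n+1})$, using that $\op{spin}$ is irreducible with image containing the scalars and that $\mr{GSpin}_{2n+1}$ admits no automorphism $\theta$ with $\op{spin}\circ\theta\cong\op{spin}$ other than the inner ones; since in the generic case $r_{\tilde\pi,\iota_\ell}$ again has full $\mr{GSpin}_{2n+1}$-monodromy over $L^+$ (independence of $\ell$ of the neutral component of the monodromy group of the compatible system over $L^+$), the subgroup $S_{\iota_\ell}(\Gamma_{L^+})$ is Zariski-dense in $\op{spin}(\mr{GSpin}_{2n+1})$, and $S_{\iota_\ell}(\Gamma_{F^+})$, which normalizes it, lies in $\op{spin}(\mr{GSpin}_{2n+1})(\bQl)$. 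We then set $r_{\iota_\ell}:=\op{spin}^{-1}\circ S_{\iota_\ell}$.

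\emph{Remaining cases.} When $\mathbf{G}_r$ is proper, the classification of connected reductive subgroups of $\mr{GSpin}_{2n+1}$ acting irreducibly through $\op{std}$, together with the restrictions on the monodromy of the $r_{\tilde\pi,\iota}$ from \cite{ks}, shows that $\op{std}(r)$ is, up to a finite-order twist, either induced from a Galois representation of a proper (totally real or CM) subextension or admits a nontrivial tensor decomposition. In each case we reduce to the corresponding $\mr{GL}_m$-compatible-system statement for the constituent(s) (\cite[\S 5.3.3]{taylor:galoisreps}, \cite[Theorem 5.5.1]{blggt}) and reassemble -- by induction, respectively tensor product -- a $\mr{GSpin}_{2n+1}(\bQl)$-valued compatible system. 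In every case the two bulleted assertions of the theorem are checked by comparison with the $\iota$-member, which is literally $r$: Frobenius semisimple conjugacy classes match because $\op{spin}$ (or $\op{std}$) is injective on semisimple conjugacy classes of $\mr{GSpin}_{2n+1}$; and $r_{\iota_\ell}$ is de Rham with the prescribed Hodge--Tate cocharacters because $S_{\iota_\ell}$ -- built from inductions and twists of the de Rham $\op{spin}(r_{\tilde\pi_i,\iota_\ell})$, and their subquotients -- is de Rham, $\op{spin}$ being faithful and detecting both the de Rham property and the Hodge--Tate cocharacter.

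The hard part will be the non-generic case analysis: determining precisely which algebraic monodromy groups occur for the $r_{\tilde\pi,\iota}$ of \cite{ks} and arranging, in each, a clean reduction to known $\mr{GL}$-compatible systems. Closely related, in the generic case, is the passage back from the honest $\mr{GL}_{2^n}(\bQl)$-valued $S_{\iota_\ell}$ to an honest $\mr{GSpin}_{2n+1}(\bQl)$-valued $r_{\iota_\ell}$, which rests on the ``honest at one place'' principle, the normalizer computation, and control of the monodromy of the compatible system over $L^+$; and the solvable descent input (Theorem \ref{solvabledescent}), together with the resolution of finite-order central-character ambiguities in pinning down the $\tilde\pi_i$, will also require care.
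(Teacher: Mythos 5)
Your overall strategy matches the paper's: potential automorphy over $L^+$, solvable descent (Theorem \ref{solvabledescent}), a Brauer decomposition applied to $\op{spin}(r)$, and a case division according to the algebraic monodromy group. Your normalizer argument for forcing $S_{\iota_\ell}(\Gamma_{F^+})$ into $\op{spin}(\mr{GSpin}_{2n+1})$ is a genuinely different, and arguably cleaner, route than the paper's appeal to Larsen--Pink's classification of basic similarity classes of maximal tori. However, three steps have real problems. First, $\op{spin}$ is \emph{not} injective on semisimple conjugacy classes of $\mr{GSpin}_{2n+1}$: already for $n=2$, where $\mr{GSpin}_5 \cong \mr{GSp}_4$ and $\op{spin}$ is the standard representation, the two semisimple classes with eigenvalues $\{1,-1,i,-i\}$ and similitude factor $i$, respectively $-i$, are $\mr{GL}_4$-conjugate but not $\mr{GSp}_4$-conjugate. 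One needs the full fundamental set $\{N, \op{std}, \op{spin}\}$ to separate classes, and while compatibility of $N(r_{\iota_\ell})$ is easy (similitude character of the essentially self-dual $S_{\iota_\ell}$), compatibility of $\op{std}(r_{\iota_\ell})$ requires an additional argument; the paper uses the plethysm $S_{\iota_\ell}^{\otimes 2}\otimes N(r_{\iota_\ell})^{-1}\cong\bigoplus_{i=0}^n\wedge^i\op{std}(r_{\iota_\ell})$ together with the independently constructed $\mr{GL}_{2n+1}$-compatible system containing $\op{std}(r)$ and the pairwise non-isomorphy of the irreducible wedge powers. Second, the ``honest at one member, honest throughout'' principle is not valid for virtual representations in a weakly compatible system (it would amount to propagating subrepresentation relations across $\ell$, which is not known); the correct argument, as in \cite[Theorem 5.5.1]{blggt}, computes $\langle S_{\iota_\ell}, S_{\iota_\ell}\rangle = 1$ separately at each $\ell$, using the irreducibility of every $\op{spin}(r_{\tilde{\pi}_{M_i},\iota_\ell})$, which in turn rests on the independence of $\ell$ of the monodromy groups over $L^+$.

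Third, your framing of the non-generic cases is off. Because $r|_{\Gamma_{F^+_{v_{\mr{St}}}}}$ is Steinberg-type, every monodromy group in sight contains a regular unipotent element, so by Dynkin's classification the only possibilities for $G_{P(r)}$ are $\mr{SO}_{2n+1}$, the image of a principal $\mr{PGL}_2$, and $\mr{G}_2$ when $n=3$. In neither proper case is $\op{std}(r)$ induced from a subextension or tensor-decomposable: in the principal $\mr{SL}_2$ case $\op{std}(r)$ is $\mr{Sym}^{2n}$ of a two-dimensional representation (so one factors $r$ through a principal $\mr{GL}_2 \to \mr{GSpin}_{2n+1}$ and builds the system from a $\mr{GL}_2$-compatible system), and in the $\mr{G}_2$ case $\op{std}(r)$ is irreducible and the reduction instead uses $\op{spin}|_{\mr{G}_2} \cong \mathbf{1}\oplus\op{std}$ together with an argument (as in \cite{chenevier:G2} or \cite[Corollary 7.3]{ms}) that every member of the seven-dimensional compatible system factors through $\mr{G}_2$. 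So the route you sketch for the remaining cases would not go through as described, although the case list itself is short and each case is handled by a known device.
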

\begin{proof}
We write $G_r$ for the Zariski closure of the image of $r$, and we write $G_r^0$ for the identity component of $G_r$ (and we use analogous notation for other Galois representations). Let $F^{+, 0}$ be the fixed field of the preimage $r^{-1}(G_r^0(\bQp))$. By Theorem \ref{padicpotaut}, there exist 
\begin{itemize}
    \item a totally real field $L^+$, which we may assume linearly disjoint from the composite of $F^{+, 0}(\mu_p)$ and the fixed field of $\bar{r}$, and split at all places above $v_{\mr{St}}$;
    \item and a cuspidal automorphic representation $\tilde{\pi}$ of $\mr{GSp}_{2n}(\mathbb{A}_{L^+})$ that is L-cohomological, and at all places $v$ above $v_{\mr{St}}$ is isomorphic to a twist of the Steinberg representation, such that $r_{\tilde{\pi}, \iota} \cong r|_{\Gamma_{L^+}}$.
\end{itemize}
We then have an associated compatible system $\{r_{\tilde{\pi}, \iota_{\ell}}\}_{\iota_{\ell}}$ indexed by field isomorphisms $\iota_{\ell} \colon \bQl \xrightarrow{\sim} \cmplx$. Moreover, $r_{\tilde{\pi}, \iota_{\ell}}$ lifts $r_{\pi, \iota_{\ell}}$, where as previously we denote by $\pi$ a (cuspidal automorphic) constituent of $\tilde{\pi}|_{\mr{Sp}_{2n}(\mathbb{A}_{L^+})}$ and $r_{\pi, \iota_{\ell}} \colon \Gamma_{L^+} \to \mr{SO}_{2n+1}(\bQl)$ the associated Galois representation. The algebraic monodromy groups $G_{r_{\pi, \iota_{\ell}}}$ are independent of $\iota_{\ell}$: each is reductive with a regular unipotent element in the image, hence is irreducible with monodromy group  equal to one of the following: all of $\mr{SO}_{2n+1}$; the image of a principal $\mr{SL}_2$; or $\mr{G}_2$ when $n=3$. Independence of $\ell$ of the formal character (or even just the rank) of maximal tori in the algebraic monodromy groups (a result of Serre; see, e.g., \cite[Proposition 6.12]{larsen-pink:lind}) implies the claim. It follows then that $G_{r_{\tilde{\pi}, \iota_{\ell}}}$ is also independent of $\iota_{\ell}$, since the $r_{\tilde{\pi}, \iota_{\ell}}$ moreover have compatible Clifford norms, and component groups are again by Serre's work independent of $\ell$ (\cite[Proposition 6.14]{larsen-pink:lind}). 

By Theorem \ref{solvabledescent} below, for any intermediate extension $L^+ \supset K^+ \supset F^+$ with $L^+/K^+$ solvable, $r|_{\Gamma_{K^+}}$ is automorphic; more precisely, there exist cuspidal automorphic representations $\tilde{\pi}_{K^+}$ of $\mr{GSp}_{2n}(\mathbb{A}_{K^+})$ (L-cohomological and twist of Steinberg at all places above $v_{\mr{St}}$) such that $r_{\tilde{\pi}_{K^+}, \iota}= r|_{\Gamma_{K^+}}$ (we are justified in writing ``$=$" since the representation constructed by Kret-Shin is unique up to $\mr{GSpin}_{2n+1}$-conjugacy).

Consider the composite $\op{spin}(r) \colon \Gamma_{F^+} \to \mr{GL}_{2^n}(\bQp)$. Writing the trivial representation of $\mr{Gal}(L^+/F^+)$ as a linear combination
\[
1= \sum_j n_j \op{Ind}_{\mr{Gal}(L^+/L_j^+)}^{\mr{Gal}(L^+/F^+)} (\psi_j)
\]
for some intermediate extensions $L_j^+$ such that $\mr{Gal}(L^+/L_j^+)$ is nilpotent for each $j$, some characters $\psi_j \colon \mr{Gal}(L^+/L_j^+) \to \cmplx^\times$, and some (possibly negative) integers $n_j$, we see that as virtual representations 
\[
\op{spin}(r) = \sum_j n_j \op{Ind}_{\Gamma_{L_j^+}}^{\Gamma_{F^+}}(\op{spin}(r|_{\Gamma_{L_j^+}}) \otimes \psi_j).
\]
We now define, for every prime $\ell$ and every choice of isomorphism $\iota_{\ell} \colon \cmplx \xrightarrow{\sim} \overline{\mathbb{Q}}_{\ell}$, the virtual $\ell$-adic representation
\[
R_{\iota_{\ell}}= \sum_j n_j \op{Ind}_{\Gamma_{L_j^+}}^{\Gamma_{F^+}}(\op{spin}(r_{\tilde{\pi}_{L_j^+}, \iota_{\ell}}) \otimes \psi_j).
\]
We have to check three things:
\begin{itemize}
    \item The $\{R_{\iota_{\ell}}\}_{\iota_{\ell}}$ form a (weakly) compatible system of actual representations. 
    \item With the previous point established, each $R_{\iota_{\ell}} \colon \Gamma_{F^+} \to \mr{GL}_{2^n}(\bQl)$ will, up to conjugacy, factor as $\op{spin}(r_{\iota_{\ell}})$ for some $r_{\iota_{\ell}} \colon \Gamma_{F^+} \to \mr{GSpin}_{2n+1}(\bQl)$.
    \item The $r_{\iota_{\ell}}$ form a compatible system of $\mr{GSpin}_{2n+1}$-valued representations, containing $r= r_{\iota}$ (when $\iota_{\ell}= \iota$).
\end{itemize}
Since $\op{spin}(r)$ is not in general irreducible, it seems easiest to give the following \textit{ad hoc} argument, treating the different possible algebraic monodromy groups of $\op{spin}(r)$ separately. 
If $n=3$ and $G_{P(r)}= \mr{G}_2$, the spin representation restricted to $G_r^{\mr{der}}$ is the direct sum of the trivial representation and $\op{std}(r)$; thus it suffices to put $\op{std}(r)$ (and a character) in a strictly compatible system, which can be done by combining the potential automorphy result \cite[Corollary 4.2.12]{calegari-emerton-gee} with the proof of \cite[Theorem 5.5.1]{blggt}, and to check that each member of the compatible system factors through $\mr{G}_2$ and is weakly compatible in the $\mr{G}_2$ sense with $P(r)$ (and  $\mr{Spin}_{2n+1}$). The factorization through $\mr{G}_2$ follows from the proof of \cite[Theorem 6.4]{chenevier:G2}, noting that all members of the 7-dimensional compatible system are Hodge-Tate regular.\footnote{Alternatively, we could avoid the elaborate group theory of \cite{chenevier:G2} and use the full strength of the conclusion of \cite[Theorem 5.5.1]{blggt}, that the compatible system is strictly pure. Then using the fact that $\mr{std}(r)|_{\Gamma_{F^+_{v_{\mr{St}}}}}$ is a Steinberg parameter, the simpler argument of \cite[Corollary 7.3]{ms} suffices.} Thus we have $r_{\iota_{\ell}} \colon \Gamma_{F^+} \to \mr{GSpin}_7(\bQl)$ for all $\iota_{\ell}$, each having the form $r_{\iota_{\ell}}= (\tau_{\iota_{\ell}}, \chi_{\iota_{\ell}})$, where $\tau_{\iota_{\ell}}$ factors through $\mr{G}_2 \subset \mr{Spin}_7$, and $\chi_{\iota_{\ell}}$ is a character valued in the center of $\mr{GSpin}_7$. We know that the $\mr{std}(\tau_{\iota_{\ell}})$ form a $\mr{GL}_7$-compatible system, and that the $\chi_{\iota_{\ell}}$ form a one-dimensional compatible system. To see that $\{r_{\iota_{\ell}}\}_{\iota_{\ell}}$ is a $\mr{GSpin}_7$-compatible system, it suffices by \cite[Lemma 1.3]{ks} to check $\mr{GL}$-compatibility after composition with the three representations $N$, $\mr{std}$, and $\mr{spin}$. $N(r_{\iota_{\ell}})= \chi_{\iota_{\ell}}^2$ is compatible, $\mr{std}(r_{\iota_{\ell}})= \mr{std}(\tau_{\iota_{\ell}})$ is compatible, and $\mr{spin}(r_{\iota_{\ell}})= R_{\iota_{\ell}}$ is compatible, so the case $G_{P(r)}= \mr{G}_2$ is complete.

If $G_{P(r)}= \mr{PGL}_2$, then $\mr{SL}_2 \subset G_r \subset \mr{GL}_2$, and we can simply regard $r$ as the composite $\varphi \circ r_0$ of a principal homomorphism $\varphi \colon \mr{GL}_2 \to \mr{GSpin}_{2n+1}$ (a principal $\mr{SL}_2$ extended to the identity map between the centers) and some $r_0 \colon \Gamma_{F^+} \to \mr{GL}_2(\bQp)$. There is a compatible system $r_{0, \iota_{\ell}}$ containing $r_0$, and then we can construct the desired $\mr{GSpin}_{2n+1}$-valued compatible system as $\varphi \circ r_{0, \iota_{\ell}}$.

We now consider the remaining case, where $G_r$ contains $\mr{Spin}_{2n+1}$. Merely for technical convenience, we replace $r$ by its twist by some power of the cyclotomic character, ensuring that $G_r= \mr{Gspin}_{2n+1}$ (or equivalently that $N(r)$ has infinite order); it suffices to prove the theorem for this twist, since we can untwist each member of the resulting compatible system to deduce the theorem for the original $r$. In any case, since $G_r$ contains $\mr{Spin}_{2n+1}$, $\op{spin}(r)$ is irreducible, as is $\op{spin}(r|_{\Gamma_{K^+}})$ for any finite extension $K^+/F^+$. By the independence-of-$\ell$ observation above, it also follows that $\op{spin}(r_{\tilde{\pi}_{K^+}, \iota_{\ell}})$ is irreducible for any $\iota_{\ell}$ and any $L^+ \supset K^+ \supset F^+$ with $L^+/K^+$ solvable. The argument of \cite[Theorem 5.5.1]{blggt} then immediately implies that the virtual representation $R_{\iota_{\ell}}$ is an actual representation and is moreover irreducible. (Note that \cite[Theorem A]{ks} does not establish
local-global compatibility at all unramified primes: by (ii) of \emph{loc.cit.}, the semisimple conjugacy class of $\iota^{-1}R(\frob{v})^{\mr{ss}}$ agrees with that of $\iota_{\ell}^{-1}R_{\iota_{\ell}}(\frob{v})^{\mr{ss}}$ at those places $v$ such that for all $j$, $\tilde{\pi}_{L_j^+}$ is unramified at $v$ and $v$ is not above $2$ or any rational prime that is ramified in $L_j^+$. This is why we only have compatibility of Frobenii at all but finitely many places $v$ at which $r$ is unramified.)

We now show that each $R_{\iota_{\ell}}$ factors through $\mr{GSpin}_{2n+1}$. Observe that the algebraic monodromy groups $G_r \cong G_R$ are connected, so \cite[Proposition 6.14]{larsen-pink:lind} shows that each $G_{R_{\iota_{\ell}}}$ is connected. Since $R$ is essentially self-dual, so is each $R_{\iota_{\ell}}$, and the unique (by irreducibility) similitude characters $\psi_{\iota_{\ell}}$ of each $R_{\iota_{\ell}}$ are compatible. For each $\iota_{\ell}$, let $T_{\iota_{\ell}}$ be a maximal torus in $G_{R_{\iota_{\ell}}}$; $T_{\iota_{\ell}}$ comes by construction with an embedding $T_{\iota_{\ell}} \to \mr{GL}_{2^n}$, and by \cite[Proposition 6.12]{larsen-pink:lind}, the isomorphism classes of these embeddings of maximal tori are independent of $\iota_{\ell}$. Since the $\psi_{\iota_{\ell}}$ are compatible, the same is true of the embeddings $T_{\iota_{\ell}} \cap G_{R_{\iota_{\ell}}}^{\mr{der}} \to \mr{GL}_{2^n}$. We now apply \cite[Theorem 4]{larsen-pink:invdim} to these embeddings of the maximal tori in the semsimple groups $G_{R_{\iota_{\ell}}}^{\mr{der}}$: 
the only ``basic similarity classes" in the sense of \textit{loc. cit.} relevant to the type $(G_r^{\mr{der}} \subset \mr{GL}_{2^n})= (\mr{Spin}_{2n+1}, \op{spin})$ are products $(\prod_{i=1}^k \mr{Spin}_{2n_i+1}, \boxtimes_i \op{spin}_{2n_i+1})$ for some decomposition $n_1+ \cdots +n_k= n$, where $\op{spin}_{2n_i+1}$ denotes the appropriate spin representation. This external product is simply the restriction of the $2^n$-dimensional spin representation to $\prod \mr{Spin}_{2n_i+1} \subset \mr{Spin}_{2n+1}$, so in any case $G_{R_{\iota_{\ell}}}^{\mr{der}} \subset \mr{GL}_{2^n}$ factors (up to conjugacy) through $\mr{Spin}_{2n+1} \subset \mr{GL}_{2^n}$. It follows that $R_{\iota_{\ell}}$ factors as $\op{spin}(r_{\iota_{\ell}})$ for some $r_{\iota_{\ell}} \colon \Gamma_{F^+} \to \mr{GSpin}_{2n+1}(\bQl)$. Moreover, since $R_{\iota_{\ell}}|_{\Gamma_{L^+}}$ is isomorphic to $\op{spin}(r_{\tilde{\pi}, \iota_{\ell}})$, each member of the compatible system $\{R_{\iota_{\ell}}\}_{\iota_{\ell}}$ of $\Gamma_{F^+}$-representations contains the image under the spin representation of a regular unipotent element of $\mr{Spin}_{2n+1}$. By \cite[Lemma 3.5]{bcempp}, each $r_{\iota_{\ell}}$ then has image containing a regular unipotent element. We deduce from Dynkin's theorem that each algebraic monodromy group $G_{r_{\iota_{\ell}}}$ contains $\mr{Spin}_{2n+1}$. 

That the collection $\{r_{\iota_{\ell}}\}_{\iota_{\ell}}$ forms a compatible system in the sense of $\mr{GSpin}_{2n+1}$-valued representations will now follow from \cite[Lemma 1.3]{ks} and the fact that $N, \op{std}, \op{spin}$ form a fundamental set of representations of $\mr{GSpin}_{2n+1}$. Indeed, by construction the $\op{spin}(r_{\iota_{\ell}})$ are compatible; $N(r_{\iota_{\ell}})$ can be read off (see \cite[Lemma 0.1]{ks}) as the similitude character of the essentially self-dual representation $R_{\iota_{\ell}}$, and these are also clearly compatible. 
To see compatibility of the representations $\op{std}(r_{\iota_{\ell}})$, note that by a standard plethysm 
\[
R_{\iota_{\ell}}^{\otimes 2}\otimes N(r_{\iota_{\ell}})^{-1} \cong \bigoplus_{i=0}^n \wedge^i \op{std}(r_{\iota_{\ell}}).
\]
The representations on the left-hand side of this isomorphism, for varying $\iota_{\ell}$, form a compatible system, so the direct sum on the right-hand side does as well. At the same time, $\op{std}(r)$ is already known to belong a compatible system by \cite[Theorem 5.5.1]{blggt}, and consequently so does each $\wedge^i \op{std}(r)$. Let us denote these compatible systems by $\{\wedge^i \tau_{\iota_{\ell}}\}_{\iota_{\ell}}$. By uniqueness of the compatible system containing a given representation, we deduce an isomorphism
\[
\bigoplus_{i=0}^n \wedge^i \op{std}(r_{\iota_{\ell}}) \cong \bigoplus_{i=0}^n \wedge^i \tau_{\iota_{\ell}}.
\]
Since $\op{std}(r_{\iota_{\ell}})$ and $\tau_{\iota_{\ell}}$ both have algebraic monodromy group $\mr{SO}_{2n+1}$, all of these wedge powers (as $i$ varies) are irreducible of different dimensions, and we conclude that $\op{std}(r_{\iota_{\ell}}) \cong \tau_{\iota_{\ell}}$ for all $\iota_{\ell}$. In other words, the representations $\op{std}(r_{\iota_{\ell}})$ do indeed form a compatible system.

This concludes the proof of the theorem, modulo Theorem \ref{solvabledescent}.
\end{proof}

The following theorem is, for $\mr{GL}_N$, a standard consequence of cyclic prime degree descent and the existence of automorphic Galois representations. 
\begin{thm}\label{solvabledescent}
Let $F^+$ be a totally real field, and let $r \colon \Gamma_{F^+} \to \mr{GSpin}_{2n+1}(\bQl)$ be a continuous representation such that for some finite totally real extension $L^+/F^+$, $r|_{\Gamma_{L^+}}$ is equivalent to $r_{\tilde{\pi}, \iota_{\ell}}$ for some isomorphism $\iota_{\ell} \colon \cmplx \xrightarrow{\sim} \bQl$ and some cuspidal automorphic representation $\tilde{\pi}$ of $\mr{GSp}_{2n}(\mathbb{A}_{L^+})$ satisfying:
\begin{enumerate}
    \item $\tilde{\pi}$ is L-cohomological with respect to the base-change $\xi_{L^+}$ of an irreducible algebraic representation $\xi$ of $\mr{GSp}_{2n}(F^+_{\infty})$.
    \item At some finite place $v_{\mr{St}}$ of $F^+$, 
    $\tilde{\pi}_{v_{\mr{St}}}$ is isomorphic to a twist of the Steinberg representation of $\mr{GSp}_{2n}(L^+_{v_{\mr{St}}})= \prod_{v \vert v_{\mr{St}}} \mr{GSp}_{2n}(L^+_v)$.
\end{enumerate}
Then for any intermediate field $L^+ \supset K^+ \supset F^+$ such that $L^+/K^+$ is a solvable Galois extension, there exists a cuspidal automorphic representation $\tilde{\pi}_{K^+}$ of $\mr{GSp}_{2n}(\mathbb{A}_{K^+})$, with $\tilde{\pi}_{K^+}$ L-cohomological with respect to $\xi_{K^+}$ and isomorphic to a twist of the Steinberg representation above $v_{\mr{St}}$, such that $r_{\tilde{\pi}_{K^+}, \iota_{\ell}} \cong r|_{\Gamma_{K^+}}$.
\end{thm}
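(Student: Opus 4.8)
The plan is to bootstrap from cyclic base change for $\mathrm{GL}_{2n+1}$, as in Taylor's argument, but passing through the standard representation. First, since $\mathrm{Gal}(L^+/K^+)$ is solvable, I would choose a chain $K^+ = M_0 \subset M_1 \subset \cdots \subset M_k = L^+$ with each $M_{i+1}/M_i$ cyclic Galois of prime degree; it suffices to treat a single such step, because the hypotheses propagate down the tower: if $\tilde\pi_{M_{i+1}}$ is $L$-cohomological with respect to $\xi_{M_{i+1}}$, then $\xi_{M_{i+1}}$ is the base change from $M_i$ of $\xi_{M_i}$ (transitivity of base change of $\xi$ from $F^+$); the twist-of-Steinberg condition at $v_{\mathrm{St}}$ persists; and $r|_{\Gamma_{M_i}}$ restricts to $r_{\tilde\pi_{M_{i+1}}, \iota_\ell}$ on $\Gamma_{M_{i+1}}$. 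So I would assume $L^+/K^+$ is cyclic of prime degree.

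Next I would descend the transfer to $\mathrm{GL}_{2n+1}$. Let $\pi$ be a cuspidal constituent of $\tilde\pi|_{\mathrm{Sp}_{2n}(\mathbb{A}_{L^+})}$; since $\tilde\pi$ is $L$-cohomological with respect to an irreducible algebraic $\xi$ and is a twist of Steinberg above $v_{\mathrm{St}}$, the representation $\pi$ is discrete series with regular infinitesimal character at every infinite place and Steinberg above $v_{\mathrm{St}}$, so its global Arthur parameter is simple and generic, and Arthur's classification attaches to it a self-dual cuspidal regular $L$-algebraic $\Pi$ on $\mathrm{GL}_{2n+1}(\mathbb{A}_{L^+})$ with $r_{\Pi, \iota_\ell} \cong \op{std}(r)|_{\Gamma_{L^+}}$ and $\Pi$ Steinberg above $v_{\mathrm{St}}$, exactly as in Proposition \ref{descent}. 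Here cuspidality of $\Pi$ (equivalently irreducibility of $\op{std}(r)|_{\Gamma_{L^+}}$, equivalently simplicity of the parameter) holds because the algebraic monodromy group of the Kret--Shin representation $r_{\tilde\pi, \iota_\ell}$ is one of $\mathrm{GSpin}_{2n+1}$, the image of a principal $\mathrm{GL}_2$, or (when $n=3$) $\mathrm{G}_2$, in each of which $\op{std}$ restricts irreducibly. Since $\op{std}(r)|_{\Gamma_{L^+}}$ extends to $\Gamma_{K^+}$, Chebotarev and strong multiplicity one force $\Pi$ to be $\mathrm{Gal}(L^+/K^+)$-invariant, so cyclic base change for $\mathrm{GL}_{2n+1}$ (\cite{arthur-clozel}) produces a cuspidal $\Pi_{K^+}$ on $\mathrm{GL}_{2n+1}(\mathbb{A}_{K^+})$ with base change $\Pi$, well-defined up to twist by a character of $\mathrm{Gal}(L^+/K^+)$. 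Among the $[L^+:K^+]$ descents I would pick the one with $r_{\Pi_{K^+}, \iota_\ell} \cong \op{std}(r)|_{\Gamma_{K^+}}$: this is possible since any descent satisfies $r_{\Pi_{K^+}, \iota_\ell}|_{\Gamma_{L^+}} \cong \op{std}(r)|_{\Gamma_{L^+}}$, so $r_{\Pi_{K^+}, \iota_\ell}$ and $\op{std}(r)|_{\Gamma_{K^+}}$ differ by a character of $\mathrm{Gal}(L^+/K^+)$, absorbed by the twisting ambiguity. Then $\Pi_{K^+}$ is regular $L$-algebraic (infinitesimal characters descend from those of $\Pi$, which are read off from $\xi$); it has trivial central character (the determinant of $\op{std}(r)|_{\Gamma_{K^+}}$ is trivial since $\op{std}(r)$ factors through $\mathrm{SO}_{2n+1}$); and at each place $w \mid v_{\mathrm{St}}$ its component is a twist of Steinberg, since the restriction of a special-type Weil--Deligne parameter to a subgroup has the same monodromy operator, namely a single Jordan block, hence remains of special type --- and the twist is then forced to be trivial by the self-duality/trivial-determinant argument of Proposition \ref{descent}(3), so $\Pi_{K^+}$ is genuinely Steinberg above $v_{\mathrm{St}}$.

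Finally I would re-ascend from $\mathrm{GL}_{2n+1}$ to $\mathrm{GSp}_{2n}$ over $K^+$ by running Propositions \ref{big pi}(2), \ref{descent}, \ref{lift aut} and Corollary \ref{twist} verbatim with $(K^+, \Pi_{K^+})$ in place of $(L^+, \Pi)$: Arthur's classification descends $\Pi_{K^+}$ to a cuspidal $\pi_{K^+}$ on $\mathrm{Sp}_{2n}(\mathbb{A}_{K^+})$, discrete series at infinity with infinitesimal character matching $P(r)|_{\Gamma_{K^+}}$ (hence $L$-cohomological with respect to $\xi_{K^+}$) and Steinberg above $v_{\mathrm{St}}$; the Langlands--Labesse argument extends $\pi_{K^+}$ to a cuspidal $\tilde\pi_{K^+}$ on $\mathrm{GSp}_{2n}(\mathbb{A}_{K^+})$, with central character chosen as in Proposition \ref{lift aut} so that the resulting Kret--Shin representation $r_{\tilde\pi_{K^+}, \iota_\ell}$, which by construction lifts $P(r)|_{\Gamma_{K^+}}$, has Clifford norm with the same Hodge--Tate weights as $N(r)|_{\Gamma_{K^+}}$; then $r_{\tilde\pi_{K^+}, \iota_\ell}$ and $r|_{\Gamma_{K^+}}$ are twists of each other by a finite-order $Z$-valued character $\delta$, and replacing $\tilde\pi_{K^+}$ by its twist by the Hecke character attached to $\delta^{-1}$ (as in Corollary \ref{twist}) gives $r_{\tilde\pi_{K^+}, \iota_\ell} \cong r|_{\Gamma_{K^+}}$ while preserving the $L$-cohomological and Steinberg-type properties.

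I expect the main obstacle to be precisely this round trip: there is no cyclic descent theorem for $\mathrm{Sp}_{2n}$ or $\mathrm{GSp}_{2n}$, so the only route is to descend $\Pi$ on $\mathrm{GL}_{2n+1}$ and then re-ascend through Arthur's classification and the central-character extension, and the delicate part is checking that all the needed structure survives the detour --- that $\Pi_{K^+}$ remains cuspidal, regular $L$-algebraic, of trivial central character, and \emph{genuinely} Steinberg (not merely a twist) above $v_{\mathrm{St}}$ --- together with the Galois-theoretic bookkeeping of selecting the correct element of the twisting torsor so that $r_{\Pi_{K^+}, \iota_\ell}$, and ultimately $r_{\tilde\pi_{K^+}, \iota_\ell}$, agrees with $r|_{\Gamma_{K^+}}$ on the nose rather than up to a character. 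The point that makes the argument comparatively clean --- and lets us avoid the complication of invariant Hecke characters that fail to descend, which for general $\mathrm{GL}_N$ is handled separately and which we address in Corollary \ref{descentinv} --- is that the target Galois representation $r|_{\Gamma_{K^+}}$ is given to us at the outset.
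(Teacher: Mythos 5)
Your proposal is correct and follows essentially the same route as the paper's proof: reduce to a cyclic prime-degree step, transfer to a self-dual cuspidal $\Pi$ on $\mr{GL}_{2n+1}(\mathbb{A}_{L^+})$, descend by Arthur--Clozel, normalize the descent by a character of $\mr{Gal}(L^+/K^+)$ so that its Galois representation is $\op{std}(r)|_{\Gamma_{K^+}}$, and re-ascend through Arthur's classification and Propositions \ref{descent} and \ref{lift aut}, removing a final finite-order central twist. The one step you elide is that the cyclic descent $\Pi_{K^+}$ is a priori only \emph{essentially} self-dual, so before you may speak of ``any descent satisfies $r_{\Pi_{K^+},\iota_\ell}|_{\Gamma_{L^+}}\cong \op{std}(r)|_{\Gamma_{L^+}}$'' you must verify the polarization sign condition at the infinite places that guarantees $r_{\Pi_{K^+},\iota_\ell}$ exists at all --- the paper does this by invoking \cite[Theorem 2.1]{stp:sign} before appealing to \cite[Theorem 1.2]{sh11}; with that supplied, your twisting normalization is exactly the paper's.
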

\begin{proof}
By induction we reduce to the case of $L^+/K^+$ cyclic of prime degree, showing at each step that there is a cuspidal automorphic representation $\tilde{\pi}_{K^+}$ of $\mr{GSp}_{2n}(\mathbb{A}_{K^+})$ satisfying the analogues over $K^+$ of conditions (1) and (2) (i.e., the hypotheses of \cite[Theorem A]{ks}), and such that the associated $\ell$-adic representation $r_{\tilde{\pi}_{K^+}, \iota_{\ell}}$ is equivalent to $r|_{\Gamma_{K^+}}$. 

Let $\pi$ be any cuspidal automorphic constituent of $\tilde{\pi}|_{\mr{Sp}_{2n}(\mathbb{A}_{L^+})}$, and let $\Pi$ be the functorial transfer of $\pi$ to a cuspidal automorphic representation of $\mr{GL}_{2n+1}(\mathbb{A}_{L^+})$. The cuspidality of $\Pi$ follows from \cite[Corollary 2.2]{ks}. Moreover, $\Pi_w$ is isomorphic to the Steinberg representation for all $w \vert v_{\mr{St}}$ (\cite[Proposition 8.2]{ms}), and it is regular L-algebraic with archimedean L-parameters $\mr{std} \circ \mr{rec}_w(\tilde{\pi})$ for all $w \vert \infty$ (see the discussion surrounding \cite[Theorem 2.4]{ks}). Likewise, the Satake parameters of $\Pi$ are those of $\tilde{\pi}$ composed with $\mr{std}$, so by the strong multiplicity one theorem (\cite{jacquet-shalika}) $\Pi^{\sigma}=\Pi$. Cyclic descent of prime degree (\cite{arthur-clozel}) implies that $\Pi$ descends to a cuspidal automorphic representation $\Pi_{K^+}$ of $\mr{GL}_{2n+1}(\mathbb{A}_{K^+})$. It follows that $\Pi_{K^+}$ is regular L-algebraic with the same archimedean L-parameters (under the identification $W_{K^+_v}= W_{L^+_w}$ for all $w \vert v \vert \infty$) as $\Pi$, and that $\Pi_{K^+, v_{\mr{St}}}$ is isomorphic to a twist of the Steinberg representation. We claim that $\Pi_{K^+}$ may be chosen to be self-dual. The description of the fibers of cyclic base-change in \cite{arthur-clozel} implies (by self-duality of $\Pi$) that $\Pi_{K^+}$ is essentially self-dual: $\Pi_{K^+} \cong \Pi_{K^+}^{\vee} \otimes \psi$ for some Hecke character $\psi$ of $K^+$. By \cite[Theorem 2.1]{stp:sign}, $\psi_v(-1)$ is independent of $v \vert \infty$, and so by \cite[Theorem 1.2]{sh11}, as formulated in \cite[Theorem 2.1.1]{blggt}, $\Pi_{K^+}$ has an associated Galois representation $r_{\Pi_{K^+}, \iota_{\ell}} \colon \Gamma_{K^+} \to \mr{GL}_{2n+1}(\bQl)$. Both $r_{\Pi_{K^+}, \iota_{\ell}}$ and $\mr{std}(r|_{\Gamma_{K^+}})$ descend $\mr{std}(r_{\tilde{\pi}, \iota_{\ell}})$, so (by irreducibility) there is a character $\eta \colon \mr{Gal}(L^+/K^+) \to \bQl^\times$ such that $r_{\Pi_{K^+}, \iota_{\ell}} \cong \mr{std}(r|_{\Gamma_{K^+}}) \cdot \eta$. Thus
\[
r_{\Pi_{K^+}, \iota_{\ell}}^\vee \cong \mr{std}(r|_{\Gamma_{K^+}})^\vee \cdot \eta^{-1} \cong \mr{std}(r|_{\Gamma_{K^+}}) \cdot \eta^{-1} \cong r_{\Pi_{K^+}, \iota_{\ell}} \otimes \eta^{-2}.
\]
Replacing $\Pi_{K^+}$ by its twist $\Pi_{K^+} \otimes \eta^{-1}$ (note that $\eta$ is finite-order, so we may view it as a Hecke character), we may and do assume that $\Pi_{K^+}$ is self-dual.

Now, by \cite{art13}, there is a cuspidal automorphic representation $\pi_{K^+}$ of $\mr{Sp}_{2n}(\mathbb{A}_{K^+})$ transferring to $\Pi_{K^+}$, and more precisely satisfying the conclusions of Proposition \ref{descent}. We can then apply Proposition \ref{lift aut} to lift $\pi_{K^+}$ to a cuspidal automorphic representation of $\tilde{\pi}_{K^+}$ satisfying the conclusions of \textit{loc. cit.}, and in particular having an associated Galois representation $r_{\tilde{\pi}_{K^+}, \iota_{\ell}} \colon \Gamma_{K^+} \to \mr{GSpin}_{2n+1}(\bQl)$. Finally, we compare $r|_{\Gamma_{K^+}}$ with $r_{\tilde{\pi}_{K^+}, \iota_{\ell}}$. Their compositions with $P \colon \mr{GSpin}_{2n+1} \to \mr{SO}_{2n+1}$ are $\mr{SO}_{2n+1}$-conjugate upon restriction to $\Gamma_{L^+}$ (taking note of the uniqueness statement in \cite[Proposition B.1]{ks}), so we may assume they have equal restriction to $\Gamma_{L^+}$. We claim that $P(r|_{\Gamma_{K^+}})$ and $P(r_{\tilde{\pi}_{K^+}, \iota_{\ell}})$ are necessarily equal. Indeed, any two homomorphisms $r_1, r_2 \colon \Gamma_{K^+} \to \mr{SO}_{2n+1}(\bQl)$ that are $\mr{GL}_{2n+1}$-irreducible and become equal after restriction to $\Gamma_{L^+}$ must be equal: $r_1=r_2 \cdot \eta$ for some character $\eta$ of $\mr{Gal}(L^+/K^+)$, by the irreducibility, and then $\eta$ necessarily satisfies $\eta^{2n+1}=1$ and $\eta^2=1$, hence $\eta=1$.

Thus $r|_{\Gamma_{K^+}} \otimes \chi = r_{\tilde{\pi}_{K^+}, \iota_{\ell}}$ for some character $\chi \colon \Gamma_{K^+} \to \bQl^\times$, which is finite-order since its Hodge-Tate weights are all zero. Regarding $\chi$ as a Hecke character, we can then replace $\tilde{\pi}_{K^+}$ with $\tilde{\pi}_{K^+} \otimes \chi^{-1}$, and (having made this replacement) we as desired have arranged that $r_{\tilde{\pi}_{K^+, \iota_{\ell}}}= r|_{\Gamma_{K^+}}$, and that $\tilde{\pi}_{K^+}$ still satisfies the hypotheses of \cite[Theorem A]{ks}. 
\end{proof}
This concludes the proof of Theorem \ref{compsystem}, but the following complementary result on automorphic descent may be of some interest:
\begin{cor}\label{descentinv}
Let $L^+/K^+$ be a solvable extension of totally real fields. Let $\tilde{\pi}$ be a cuspidal automorphic representation of $\mr{GSp}_{2n}(\mathbb{A}_{L^+})$ satisfying the hypotheses of \cite[Theorem A]{ks}, i.e. L-cohomological of some weight $\xi$ and a twist of the Steinberg representation at some finite place $w_{\mr{St}}$. Assume that $\tilde{\pi}^\sigma \cong \tilde{\pi}$ for all $\sigma \in \mr{Gal}(L^+/K^+)$. Then there is a $\mr{Gal}(L^+/K^+)$-invariant finite-order character $\chi \colon \Gamma_{L^+} \to \bQl^\times$ and a cuspidal automorphic representation $\tilde{\pi}_{K^+}$ of $\mr{GSp}_{2n}(\mathbb{A}_{K^+})$ that is unramified at primes below those where $\tilde{\pi}$, $\chi$, and $L^+/K^+$ are unramified, isomorphic to a twist of the Steinberg representation at the place below $w_{\mr{St}}$, and L-cohomological with weight descending that of $\tilde{\pi}_{\infty}$, such that $\tilde{\pi}_{K^+}$ descends $\tilde{\pi} \otimes \chi$ in the following sense: for the three kinds of places just specified the local L-parameters are defined, and in each case the local L-parameter of $\tilde{\pi}_{K^+}$ restricts to that of $\tilde{\pi} \otimes \chi$.

In particular, if $L^+/K^+$ is cyclic, we may descend $\chi$ and therefore descend $\tilde{\pi}$ itself.
\end{cor}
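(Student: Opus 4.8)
The plan is to imitate the proof of Theorem~\ref{solvabledescent}, working purely on the automorphic side: descend $\tilde\pi$ through the group $\mr{GL}_{2n+1}$, where Arthur--Clozel solvable base change is available, then rebuild over $K^+$ using Arthur's classification (Proposition~\ref{descent}) and the Langlands--Labesse lifting argument (Proposition~\ref{lift aut}); the twist $\chi$ in the statement will emerge at the end as the discrepancy between the base change of the rebuilt representation and $\tilde\pi$. Concretely, let $\pi$ be a cuspidal constituent of $\tilde\pi|_{\mr{Sp}_{2n}(\mb A_{L^+})}$ and let $\Pi$ be its functorial transfer to $\mr{GL}_{2n+1}(\mb A_{L^+})$: by \cite[Corollary~2.2]{ks} and \cite[Proposition~8.2]{ms}, $\Pi$ is cuspidal, self-dual with trivial central character (its parameter factors through $\mr{SO}_{2n+1}\subset\mr{SL}_{2n+1}$), regular L-algebraic, and isomorphic to the Steinberg representation at every place above $v_{\mr{St}}$. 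Since $\tilde\pi^{\sigma}\cong\tilde\pi$ for all $\sigma\in\mr{Gal}(L^+/K^+)$ the Satake parameters of $\Pi$ are $\mr{Gal}(L^+/K^+)$-invariant, so $\Pi^{\sigma}\cong\Pi$ by strong multiplicity one (\cite{jacquet-shalika}). A globally cuspidal representation with a Steinberg local component is not automorphically induced, so solvable base change (\cite{arthur-clozel}) furnishes a cuspidal $\Pi_{K^+}$ of $\mr{GL}_{2n+1}(\mb A_{K^+})$ descending $\Pi$, regular L-algebraic with the same archimedean parameters (under $W_{K^+_v}=W_{L^+_w}$) and a twist of Steinberg at the place below $v_{\mr{St}}$.

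Next I would correct $\Pi_{K^+}$ so that Arthur's classification applies, and then rebuild. Both $\Pi_{K^+}$ and $\Pi_{K^+}^{\vee}$ descend $\Pi^{\vee}=\Pi$, hence differ by a character $\psi$ of $\mr{Gal}(L^+/K^+)^{\mr{ab}}$ (viewed as a Hecke character of $K^+$ trivial on norms from $L^+$); similarly $\omega_{\Pi_{K^+}}$ is trivial on norms, and comparing central characters gives $\psi^{2n+1}=\omega_{\Pi_{K^+}}^{-2}$. Then $\Pi':=\Pi_{K^+}\otimes(\omega_{\Pi_{K^+}}^{-1}\psi^{-n})$ is self-dual with trivial central character, still cuspidal, still base-changes to $\Pi$ (the twisting character is trivial on norms), and is Steinberg at the place below $v_{\mr{St}}$ (self-duality plus trivial determinant force the local twist there to be trivial). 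Now $\Pi'$ satisfies the hypotheses of Proposition~\ref{descent} over $K^+$, producing a cuspidal $\pi_{K^+}$ of $\mr{Sp}_{2n}(\mb A_{K^+})$ transferring to $\Pi'$, discrete series at infinity with infinitesimal characters descending those of $\pi_{\infty}$, and Steinberg at the place below $v_{\mr{St}}$. Proposition~\ref{lift aut} then lifts $\pi_{K^+}$ to a cuspidal $\tilde\pi_{K^+}$ of $\mr{GSp}_{2n}(\mb A_{K^+})$; choosing its central character (as in the proof of Proposition~\ref{lift aut}, a finite-order character times a power of $| \cdot |$ extending $\omega_{\pi_{K^+}}$) to have archimedean type descending that of $\omega_{\tilde\pi}$, one gets that $\tilde\pi_{K^+}$ is L-cohomological of the weight descending $\tilde\pi_{\infty}$, is a twist of Steinberg at the place below $v_{\mr{St}}$, and is unramified at the required primes.

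Finally I would compare $\tilde\pi_{K^+}$ with $\tilde\pi$ over $L^+$. At the archimedean, unramified, and Steinberg places, the local parameters of $\tilde\pi_{K^+}$ (base changed to $L^+$) and of $\tilde\pi$ both have $\op{std}$-composition the local parameter of $\Pi'$, resp. $\Pi$, and these agree over $L^+$; since $\op{std}$ separates $\mr{GSpin}_{2n+1}$-parameters up to a twist by a character into the center $Z\cong\mb G_m$, the base change of $\tilde\pi_{K^+}$ to $L^+$ has the same local L-parameters at those places as $\tilde\pi\otimes\chi$ for some Hecke character $\chi$ of $L^+$. Because $\tilde\pi$ has a Steinberg component it has trivial self-twist group, so the $\sigma$-invariance of $\tilde\pi$ and of the base change of $\tilde\pi_{K^+}$ force $\chi$ to be $\mr{Gal}(L^+/K^+)$-invariant, and matching archimedean parameters force $\chi$ to have Hodge--Tate weight zero, hence to be finite order; this is the asserted conclusion. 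If moreover $L^+/K^+$ is cyclic, Hilbert~90 for idele class groups shows every $\mr{Gal}(L^+/K^+)$-invariant Hecke character of $L^+$ equals $\mu\circ N_{L^+/K^+}$ for a finite-order Hecke character $\mu$ of $K^+$, so $\tilde\pi_{K^+}\otimes\mu^{-1}$ descends $\tilde\pi$ itself while still satisfying the hypotheses of \cite[Theorem~A]{ks}.

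I expect the main obstacle to be the character bookkeeping of the middle step: arranging that $\Pi'$ is genuinely orthogonal with trivial determinant so that Arthur's classification attaches the correct $\mr{Sp}_{2n}$-representation (this rests on the identity $\psi^{2n+1}=\omega_{\Pi_{K^+}}^{-2}$ together with the triviality of $\omega_{\Pi}$), and verifying at the end that $\tilde\pi$ has no nontrivial self-twists so that $\chi$ is well-defined and $\mr{Gal}(L^+/K^+)$-invariant. The reason the twist $\chi$ cannot be removed for general solvable $L^+/K^+$ is precisely the $\mr{GL}_N$-phenomenon of $\mr{Gal}(L^+/K^+)$-invariant Hecke characters of $L^+$ that are not base changes from $K^+$ (the obstruction lying in $\widehat H^{-1}(\mr{Gal}(L^+/K^+),C_{L^+})$); the Arthur--Clozel solvable descent for $\mr{GL}_{2n+1}$, applicable since $\Pi$ is not automorphically induced, is used as a black box.
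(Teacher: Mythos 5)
Your route is genuinely different from the paper's, and it has a gap at its first main step. The paper does not attempt a direct automorphic solvable descent at all: it first descends the \emph{Galois} representation, choosing intertwiners $A_\sigma$ between $r_{\tilde{\pi},\iota_\ell}$ and its $\Gamma_{K^+}$-conjugates, observing that the resulting $2$-cocycle is valued in $\rats/\ints$ and killed by Tate's theorem $H^2(\Gamma_{K^+},\rats/\ints)=0$, thereby producing $r_{K^+}\colon \Gamma_{K^+}\to\mr{GSpin}_{2n+1}(\bQl)$ with $r_{K^+}|_{\Gamma_{L^+}}=r_{\tilde{\pi},\iota_\ell}\otimes\chi$; the invariance of $\chi$ is then read off on the Galois side from \cite[Proposition 5.2]{ks}, and the pair $(r_{K^+},\tilde{\pi}\otimes\chi)$ is fed into Theorem \ref{solvabledescent}, whose cyclic induction works precisely because a Galois representation over $K^+$ is available at every intermediate layer to correct and rigidify the descended automorphic representation.

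The gap in your argument is the assertion that ``solvable base change (Arthur--Clozel) furnishes a cuspidal $\Pi_{K^+}$ descending $\Pi$.'' Arthur--Clozel prove cyclic descent of \emph{prime degree} only. For a non-cyclic solvable $L^+/K^+$ one must iterate through a tower, and after the first cyclic step the descended representation is determined only up to twist by characters of that cyclic layer; it need not be invariant under the next layer, precisely because $\mr{Gal}(L^+/K^+)$-invariant Hecke characters of $L^+$ need not descend. This is the content of Rajan's theorem cited in the paper's remark, and its conclusion is only that some twist $\Pi\otimes\chi'$ by an invariant Hecke character descends --- not $\Pi$ itself; the paper's remark even notes that genuine descent fails for some $\tilde{\pi}$ of the type considered. ``Not automorphically induced'' is not the relevant obstruction here. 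You correctly identify this phenomenon in your closing paragraph as the reason the twist $\chi$ appears in the statement, but you do not notice that it already invalidates your opening step, on which the rest is built: with only $\mr{BC}(\Pi_{K^+})\cong\Pi\otimes\chi'$ available, the identity $\psi^{2n+1}=\omega_{\Pi_{K^+}}^{-2}$, the triviality on norms of your twisting character, and the claim that $\Pi'$ still base-changes to $\Pi$ all need to be redone, and one must additionally show $\chi'$ can be taken of finite order (an infinity-type analysis \`a la Weil) to preserve regular L-algebraicity before Arthur's classification can be applied over $K^+$. A secondary unproved point is your appeal to ``trivial self-twist group'' of $\tilde{\pi}$ on the automorphic side; the paper obtains the corresponding rigidity on the Galois side from the presence of a regular unipotent in the image.
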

\begin{rmk}
For cyclic extensions, all invariant Hecke characters descend, but the corresponding property fails for solvable extensions. A template for the present corollary is a theorem of Rajan (\cite[Theorem 1]{rajan:solvable}) establishing an analogous result for general cuspidal automorphic representations of $\mr{GL}_N$ over any number field. In particular, Rajan's results show that not every $\tilde{\pi}$ as in the corollary can descend (even for $n=1$), since if $\tilde{\pi}$ descends, its twist by an invariant (but non-descending) character will not descend (but will still satisfy the hypotheses of the corollary): see the uniqueness claim in \cite[Theorem 1]{rajan:solvable}.
\end{rmk}
\begin{proof}
For all $\sigma \in \mr{Gal}(L^+/K^+)$, $\tilde{\pi}^\sigma$ has an associated Galois representation $r_{\tilde{\pi}^\sigma, \iota_{\ell}} \colon \Gamma_{L^+} \to \mr{GSpin}_{2n+1}(\bQl)$, which is equivalent to $r_{\tilde{\pi}, \iota_{\ell}}$ and to $r_{\tilde{\pi}, \iota_{\ell}}^{\sigma}$ by the uniqueness assertion of \cite[Theorem A]{ks} (see also Proposition 5.4 of \textit{loc. cit.}). For each $\sigma \in \Gamma_{K^+}$, let $A_{\sigma}$ be an element of $\mr{GSpin}_{2n+1}(\bQl)$ such that $r_{\tilde{\pi}, \iota_{\ell}}^\sigma(h)= A_{\sigma} r_{\tilde{\pi}, \iota_{\ell}}(h) A_{\sigma}^{-1}$ for all $h \in \Gamma_{L^+}$. We may and do assume that for all $\sigma \in \Gamma_{L^+}$, $A_{\sigma}= r_{\tilde{\pi}, \iota_{\ell}}(\sigma)$, and that $A_{\sigma}$ is defined in general by fixing its values on a fixed set of representatives $\sigma$ in $\Gamma_{K^+}$ of $\mr{Gal}(L^+/K^+)$ and then extending to $\Gamma_{K^+}$ by $A_{\sigma h}= A_{\sigma} A_h= A_{\sigma} r_{\tilde{\pi}, \iota_{\ell}}(h)$ for $h \in \Gamma_{L^+}$. Since the centralizer of $r_{\tilde{\pi}, \iota_{\ell}}$ is the center $Z \subset \mr{GSpin}_{2n+1}$, the assignment $c \colon (\sigma, \tau) \mapsto A_{\sigma \tau}^{-1} A_{\sigma} A_{\tau}$ takes values in $Z(\bQl) \simeq \bQl^\times$, and therefore $\sigma \mapsto A_{\sigma}$ is a continuous homomorphism $\Gamma_{K^+} \to \mr{SO}_{2n+1}(\bQl)$ whose restriction to $\Gamma_{L^+}$ is $P(r_{\tilde{\pi}, \iota_{\ell}})$.

Moreover, $c$ formally satisfies the 2-cocycle relation, and by our normalization of the $A_{\sigma}$, $c(\cdot, \cdot)$ is constant on $\Gamma_{L^+}$-cosets, and in particular is continuous with finite image. It can therefore be regarded (via an identification $\rats/\ints \xrightarrow{\sim} \mu_{\infty}(\bQl)$) as a cohomology class $[c] \in H^2(\Gamma_{K^+}, \rats/\ints)$ for the discrete Galois module $\rats/\ints$. By a theorem of Tate (\cite[Theorem 4]{serre:tate}), this cohomology group vanishes. Writing $c$ as a coboundary then allows us to adjust the choice of $A_{\sigma}$ such that $\sigma \mapsto A_{\sigma}$ defines a continuous homomorphism $r_{K^+} \colon \Gamma_{K^+} \to \mr{GSpin}_{2n+1}(\bQl)$ such that for some finite Galois extension $\widetilde{L}^+$ of $L^+$, $r_{K^+}|_{\Gamma_{\widetilde{L}^+}}$ equals $r_{\tilde{\pi}, \iota_{\ell}}|_{\Gamma_{\widetilde{L}^+}}$. (The reason we do not obtain this equality already for $\Gamma_{L^+}$ is that we do not know that the function realizing $c$ as a coboundary factors through $\mr{Gal}(L^+/K^+)$, and indeed it may not.) Since (as noted in the previous paragraph) $P(r_{K^+}|_{\Gamma_{L^+}})= P(r_{\tilde{\pi}, \iota_{\ell}})$, we at least have $r_{K^+}|_{\Gamma_{L^+}} = r_{\tilde{\pi}, \iota_{\ell}} \otimes \chi$ for some character $\chi$, necessarily factoring through $\mr{Gal}(\widetilde{L}^+/L^+)$. Conjugating by elements $\sigma \in \mr{Gal}(L^+/K^+)$, we see that $r_{\tilde{\pi}, \iota_{\ell}} \otimes \chi^{\sigma -1}$ is equivalent to $r_{\tilde{\pi}, \iota_{\ell}}$ for all $\sigma \in \Gamma_{K^+}$. Since the image contains a regular unipotent element, we deduce from \cite[Proposition 5.2]{ks} that $\chi^{\sigma}= \chi$ for all $\sigma \in \mr{Gal}(L^+/K^+)$.

Thus, $\tilde{\pi} \otimes \chi$ and $r_{K^+}$ satisfy the hypothese of Theorem \ref{solvabledescent}, and the automorphic representation $\tilde{\pi}_{K^+}$ produced by \textit{loc. cit.} satisfies the conclusions of the present corollary.
\end{proof}

\end{document}